\definecolor{light-gray}{gray}{0.9}
\definecolor{med-gray}{gray}{0.5}
\definecolor{gray1}{gray}{0.87}
\definecolor{gray2}{gray}{0.74}
\definecolor{gray3}{gray}{0.64}
\definecolor{gray4}{gray}{0.55}
\definecolor{verylight-yellow}{rgb}{1,1,0.7}
\definecolor{beaublue}{rgb}{0.74, 0.83, 0.9}
\definecolor{yellow}{rgb}{1,1,0.2}
\definecolor{vivid-blue}{rgb}{0.2,0,1}
\definecolor{light-pink}{rgb}{1,0.8,1}
\definecolor{med-pink}{rgb}{1,0.6,1}
\definecolor{aqua}{rgb}{0.0, 1.0, 1.0}
\definecolor{light-gray}{rgb}{0.5, 0.9, 0.5}
\definecolor{cadmiumgreen}{rgb}{0.0, 0.42, 0.24}
\theoremstyle{plain}
\newtheorem{theorem}{Theorem}[section]
\newtheorem{proposition}[theorem]{Proposition}
\newtheorem{corollary}[theorem]{Corollary}
\newtheorem{lemma}[theorem]{Lemma}
\theoremstyle{definition}
\newtheorem{definition}[theorem]{Definition}
\newtheorem{remark}[theorem]{Remark}
\newtheorem{example}[theorem]{Example}
\newtheorem{observation}[theorem]{Observation}
\newtheorem{claim}[theorem]{Claim}
\newtheorem{cor}[theorem]{Corollary}
\newtheorem*{remark*}{Remark}
\newtheorem*{ack}{Acknowledgment}
\numberwithin{equation}{section}
\numberwithin{table}{section}
\DeclareMathOperator{\Grass}{\rm{Grass}}
\newcommand{\compmm}[1]{{\color{blue}#1}}
\definecolor{purple}{rgb}{0.4,0.2,0.4}
\def\Mon{\mathrm{Mon}}
\def\hMon{\mathrm{hMon}}
\def\Grass{\mathrm{Grass}}
\def\<{\left<}
\def\>{\right>}
\def\F{{\sf k}}
\def\G{\mathrm{G}}
\def\ns{\footnotesize \it}
\def\max{\mathrm{max}}
\definecolor{med-gray}{gray}{0.5}
\definecolor{gray1}{gray}{0.87}
\definecolor{gray2}{gray}{0.74}
\definecolor{gray3}{gray}{0.64}
\definecolor{gray4}{gray}{0.48}
\definecolor{verylight-yellow}{rgb}{1,1,0.7}
\definecolor{yellow}{rgb}{1,1,0.2}
\definecolor{vivid-blue}{rgb}{0.2,0,1}
\definecolor{light-pink}{rgb}{1,0.8,1}
\definecolor{med-pink}{rgb}{1,0.6,1}
\definecolor{aqua}{rgb}{0.0, 1.0, 1.0}
\definecolor{light-gray}{rgb}{0.5, 0.9, 0.5}
\begin{document}
\date{}

\author{Nasrin Altafi\\[.05in]
{\ns Department of Mathematics, KTH Royal Institute of Technology, S-100 44 Stockholm, Sweden.
}\\[.2in] Anthony Iarrobino\\[.05in]
{\ns Department of Mathematics, Northeastern University, Boston, MA 02115,
 USA.
}\\[.2in] 
Leila Khatami\\[0.05in]
{\ns Union College, Schenectady, New York, 
 12308, USA.}
 \\[.2in]
Joachim Yam\'{e}ogo\\[0.05in]
{\ns Universit\'e C\^ote d'Azur, CNRS, LJAD, FRANCE.}
}
\title{Number of generators of ideals in Jordan cells of the family of graded Artinian algebras of height two
\footnote{\textbf{Keywords}: Artinian algebra, Hilbert function, hook code, Jordan type, partition, cellular decomposition, graded ideal. \textbf{2010 Mathematics Subject Classification}: Primary: 13E10;  Secondary: 05A17, 05E40, 13D40, 14C05.
\quad Email addresses: {\sf nasrinar@kth.se, a.iarrobino@northeastern.edu, khatamil@union.edu, joachim.yameogo@unice.fr}}}
\maketitle
\abstract
We let $A=R/I$ be a standard graded Artinian algebra quotient of $R={\sf k}[x,y]$, the polynomial ring in two variables over a field ${\sf k}$ by an ideal $I$, and let $n$ be its vector space dimension. 
The Jordan type $P_\ell$ of a linear form $\ell\in A_1$ is the partition of $n$ determining the Jordan block decomposition of the multiplication on $A$ by $\ell$ -- which is nilpotent. The first three authors previously determined which partitions of $n=\dim_{\sf k}A$ may occur as the Jordan type for some linear form $\ell$ on a graded complete intersection Artinian quotient $A=R/(f,g)$ of $R$, and they counted the number of such partitions for each complete intersection Hilbert function $T$ \cite{AIK}.\par
 We here consider the family $\G_T$ of graded Artinian quotients  $A=R/I$ of $R={\sf k}[x,y]$, having arbitrary Hilbert function $H(A)=T$. The Jordan cell $\mathbb V(E_P)$ corresponding to a partition $P$ having diagonal lengths $T$ is comprised of all ideals $I$ in $R$ whose initial ideal is the monomial ideal $E_P$ determined by $P$.  These cells give a decomposition of the variety $\G_T$ into affine spaces.  We determine the generic number $\kappa(P)$ of generators for the ideals in each cell $\mathbb V(E_P)$, generalizing a result of \cite{AIK}. In particular, we determine those partitions for which $\kappa(P)=\kappa(T)$, the generic number of generators for an ideal defining an algebra $A$ in $\G_T$. We also count the number of partitions $P$ of diagonal lengths $T$ having a given $\kappa(P)$. A main tool is a combinatorial and geometric result  allowing us to split $T$ and any partition $P$ of diagonal lengths $T$ into simpler $T_i$ and partitions $P_i$, such that $\mathbb V(E_P)$ is the product of the cells $\mathbb V(E_{P_i})$, and $T_i$ is single-block:  $G_{T_i}$ is a Grassmannian.
 
\tableofcontents\vskip 0.3cm\noindent
\section{Introduction.}
Let $A$ be a standard-graded Artinian algebra $A=\mathop{\oplus}\limits_{i=0}^{\sf j}A_i$ over a field $\F$,  and let $\ell\in A_1$ be a linear form. The multiplication map $m_\ell: A \to A:  a \to \ell\cdot a$ is nilpotent. The \emph{Jordan type} $P_\ell=P_{\ell,A}$ is a partition of $n=\dim_\F A$, giving the Jordan block decomposition of the multiplication map $m_\ell$. 
We consider standard graded Artinian algebra quotients $A=R/I$ where $I$ is an ideal of $ R={\sf k}[x,y]$, the polynomial ring $R$ in two variables over $\sf k$. 
We will assume that $I_1=0$, so the height (or codimension) $\dim_\F A_1$ of $A$ is two. The \emph{order} of the graded ideal $I$ is the lowest degree of a (non-zero) element.
\par
The Hilbert function $T=H(A)$ of such a graded Artinian algebra  $A$ in codimension two is a sequence of the following form 
\begin{equation}\label{Teq}
T=\left(1,2,\dots , d, t_d,t_{d+1},\dots ,t_{\sf j},0\right) \text { where } d\ge t_d\ge t_{d+1}\ge \cdots \ge t_{\sf j}>0,
\end{equation}where $t_i=\dim_{\sf k} A_i$, $\sf j$ is the {\emph{socle degree}} of $T$, $d$ is the \emph{order} of $T$ -- the order of any ideal $I$ such that $H(R/I)=T$ -- and $n=|T|=\sum t_i=\dim_{\sf k}A$.\par
\par
The first three authors in \cite{AIK} determined all possible Jordan types $P_\ell$ of linear forms $\ell$ for complete intersection (CI) graded Artinian algebras of height two: they assumed that the field $\sf k$ either has characteristic zero, or is infinite of characteristic $p>{\sf j}$, the socle degree of the algebra. In this paper we make the same assumption on characteristic because we use a standard-basis result due to J. Brian\c{c}on and A. Galligo in showing Lemma \ref{corner-kick-off-lemma}, that requires the restriction.
Our main results in this paper generalize those of \cite{AIK} to all height two Hilbert functions. \par

  Let $P$ be a partition and let $E_P$ be the monomial ideal determined by the Ferrers diagram of $P$. The \emph{diagonal lengths} of the Ferrers diagram of $P$ is just the Hilbert function $T_P=H(R/E_P)$ (Definition \ref{EPdef}).  The set $\mathcal P(T)$ of partitions having diagonal lengths $T$ -- or, equivalently -- the set of monomial ideals $E_P$ such that $H(R/E_P)=T$, has been studied by the second and last author in \cite{IY}, as well as by others, including \cite{Ev,Con}. We denote by $\G_T$ the family of graded Artinian quotients $A=R/I$ where $I$ is an ideal of $R={\sf k}[x,y]$, for which the Hilbert function $H(A)=T$. This is a smooth projective variety $\G_T$, that is locally an affine space of known dimension (Proposition \ref{GTdimthm}). Let $\mathbb V(E_P)$ be the affine cell of $\G_T$ that parametrizes the ideals of $R$ having initial ideal $E_P$ (Definition \ref{celldef}). We will term $\mathbb V(E_P)$ a \emph{Jordan cell}.\footnote{The local analogue $Z_T$ of $\G_T$ has been termed a \emph{vertical} cell by J. Brian\c{c}on \cite{brian}; $\G_T$ has been termed an
  Ellingsrud-Str\"{o}mme-G\"{o}ttsche cell \cite{IY}, defined using a $\mathbb C^\ast$ action. The fourth author showed that these concepts are the same both for $\G_T$ and the associated parameter space $Z_T$ for local algebras \cite{jy-1}.}
 L.~G\"{o}ttsche showed that the cells $\mathbb V(E_P)$ for $P\in \mathcal P(T)$ form a cellular decomposition of $\G_T$ \cite{Got}; this followed the analogous results of G. Ellingsrud and S.A. Str{\o}mme for the punctual Hilbert scheme of projective space $\mathbb P^2$, using the A. Bialynicki-Birula theorem \cite{ES1,ES2,Bia}. This cellular decomposition was as well studied by G.~Gotzmann \cite{Gm2}, and the fourth author \cite{jy-1,jy-2}. Generators and relations for height two graded ideals have been studied by many, as \cite{Bu,brian, brian-gal,MR,Con}; J.O. Kleppe studies a scheme analogue of $\G_T$ \cite{Kl}, L.~Evain discusses an equivariant Hilbert scheme, involving weights of the variables \cite{Ev};  some combinatorics of cells in \cite{IY} are seen in a larger context in~\cite{LW}. The cells $\mathbb V(E_P)$ and their connection with generators and relations of ideals have also been studied by A.~Conca and G.~Valla in  \cite{CoVa}.\vskip 0.2cm\par\noindent
 By a \emph{generic} element of an irreducible algebraic variety $X$ we will mean an element belonging to a certain non-empty Zariski-dense open subset $U\subset X$.\par
The main results of this paper, {\bf Theorems \ref{kappathm} and \ref{componentTheorem}}, give $\kappa(P)$, the minimum number of generators for the ideal $I$ defining a generic element $A=R/I$ in the cell $\mathbb{V}(E_P)$, for each partition $P\in \mathcal P(T)$, where $T$ is a Hilbert function of a height two graded Artinian algebra, namely a sequence of the form given in Equation~\eqref{Teq}. In {\bf Theorem
\ref{multicountthm}}, we give the number of partitions $P\in \mathcal P(T)$ with $\kappa(P)=k$, for any positive integer $k$.
 
\vskip 0.2cm\par\noindent
{\bf Summary.}
{\it Decomposition of cells.}
 We first prove in Section \ref{prodsec} a  new combinatorial and geometric result allowing us to associate to an arbitrary Hilbert function $T$ and to any partition $P$ of diagonal lengths $T$ their \emph{components}, a set of simpler, single-block sequences $T_i$ and partitions $P_i$ of diagonal lengths $T_i$. We show that the Jordan cell $\mathbb V(E_P)$ of $\G_T$ is in a natural way the product of its single-block components, the cells $\mathbb V(E_{P_i})$ of $\G_{T_i}$  ({\bf Theorem~\ref{projectionthm}}).  This decomposition is closely related to the \emph{hook codes} that had been studied in \cite{IY} and that we define in Section  \ref{hookcodesec}.\vskip 0.2cm\par\noindent
 {\it Single-block case.}
In the \emph{single-block} case the Hilbert function sequence $T$ of Equation \eqref{Teq} satisfies $d={\sf j}$, so
 \begin{equation}\label{single-block1eq}
 T=(1,2,\ldots,d,t,0),
 \end{equation}
 that is $t_i=i+1$ for $0\le i\le d-1$ and $ t_d=t$. 
 An ideal $I\subset R$ defining an algebra $A=R/I$ of Hilbert function $T$ of  Equation~\eqref{single-block1eq} satisfies
  \begin{equation}\label{single-block2eq}
  I=V\oplus \mathfrak m^{d+1},
  \end{equation}
  where we let $V=I_d\subset R_d$ and $\mathfrak m$ is the maximal ideal of~$R$.
 Thus, the projective variety $\G_T$ in the single-block case is isomorphic to the Grassmannian $\Grass(s,R_d), s=d+1-t$, parametrizing $s$-dimensional subspaces $V\subset R_d$.  In {\bf Theorem \ref{kappathm}} we determine the integer $\kappa(P)$, the number of generators of a generic ideal in the cell $\mathbb V(E_P)$, for single-block partitions, and in {\bf Theorem~\ref{countingthm}}, we give the number of \emph{special} single-block partitions $P$, namely partitions $P$ with $\kappa(P)>\kappa(T)$, the generic - and minimum - number of generators for an ideal defining an algebra $A$ in $\G_T$. More generally, in {\bf Corollary \ref{singlecountcor}} we determine the number of partitions $P\in \mathcal P(T)$ with $\kappa(P)=k$, for any positive integer $k$.
\vskip 0.2cm\par\noindent
 {\it General case.}
 In {\bf Theorem \ref{componentTheorem}} we determine $\kappa(P)$ for an arbitrary partition $P\in \mathcal P(T)$ in terms of the $\kappa(P_i)$ of their single-block components.  We show that $P$ is special ($\kappa(P)\not= \kappa(T)$) if and only if some component $P_i$ is special ({\bf Theorem \ref{componentthm}}). In {\bf Theorem~\ref{multicountthm}} we determine $\kappa(P)$ for arbitrary Jordan types $P$, and we give the number of partitions $P\in \mathcal P(T)$ with $\kappa(P)=k$, for any positive integer $k$; this result also determines the number of special partitions in $\mathcal P(T)$ (Corollary~\ref{countspecialcor}), and we recover the number of CI partitions shown in \cite{AIK} (Corollary \ref{CIJordantypecor}).

\vskip 0.2cm\par\noindent
{\it Hook code}. We explain in Section \ref{hookcodesec} the \emph{hook code} for partitions $P$ of diagonal lengths $T$.  The hook code of $P$ is a sequence $\mathfrak Q(P)=\left(\mathfrak h_d(P),\ldots  ,\mathfrak h_{\sf j}(P)\right)$ of partitions-in-a-box $\mathfrak {\mathfrak B}_i(T), d\le i\le \sf j$, where the box $\mathfrak {\mathfrak B}_i(T)=(\delta_{i+1})\times (1+\delta_i)$: that is, the Ferrers diagram of each $\mathfrak h_i(P)$ has at most $\delta_{i+1}$ rows and $(1+\delta_i)$ columns (Definition \ref{hookcodedef}). The partitions of diagonal lengths  $T$ are completely determined by their hook code $\mathfrak Q(P)$. Also, the component partition $P_i$ of diagonal lengths $T_i$ has as its hook code the degree-$i$ component  $\mathfrak h_i(P)$ of $\mathfrak Q(P)$ (Proposition \ref{hookP(i)lem}).
\vskip 0.2cm\noindent
  \par
The proofs involve a careful study of standard generators and relations for the ideals $I$ defining algebras in the cell $\mathbb V(E_P)$, using in particular the hook code of a partition $P$. We then compare these invariants to those for the partition $P:x$ corresponding to the ideals $I:x$.  This allows us to compare $\kappa(P)$ with $\kappa (P:x)$, and we thus determine how to compute $\kappa(P)$ from the hook code $\mathfrak Q(P)$. 
\section{Cells of the variety $\G_T$ and their hook codes.}\label{GTchapter}
\subsection{The variety $\G_T$ and the cells $\mathbb V(E_P)$.}\label{GTsec}
We need some basic notions from \cite{IY,IY2} (see also \cite[\S 4.1]{AIK}).\par

Recall that we consider graded Artinian quotients $A=R/I$, where $I$ is an ideal of $R={\sf k}[x,y]$ the polynomial ring over an arbitrary field $\sf k$. The Hilbert function of $A$ is the sequence $H(A)=(1,t_1,\ldots, t_{\sf j} )$ where $t_i=\dim_{\sf k} A_i$ and $\sf j$ is the socle degree of $A$ that is $ A_{\sf j}\not=0, A_{{\sf j} +1}=0$. The family of all such quotients having Hilbert function $H(A)=T$ is denoted by $\G_T$, which has a natural structure of subvariety $\G_T\subset \Pi_{d\le i\le{\sf j}} \Grass(t_i,R_i)$, where $\Grass(t_i,R_i)$ parametrizes quotients $A_i=R_i/I_i$ of vector space dimension $t_i$. Thus we have
$$\iota: \G_T \to \Pi_{d\le i\le {\sf j}} \Grass(t_i,R_i):  A=R/I \to (R_d/I_d,R_{d+1}/I_{d+1},\ldots, R_{\sf j}/ I_{\sf j}).$$
We now explain the affine cell decomposition $\G_T=\bigcup_{P\in \mathcal P(T)}\mathbb V(E_P)$ where $P$ runs through the set $\mathcal P(T)$ of partitions having diagonal lengths $T$.  (Theorem \ref{GTdecompthm}).\par
\begin{definition}[The monomial ideal $E_P$ and diagonal lengths of $P$]\label{EPdef}
Given a partition $P=(p_1,p_2,\ldots, p_t)$ of $n=\sum p_i$ where $p_1\ge p_2\ge\cdots \ge p_t$, we let $C_P$ be the set of $n$  monomials that fill the Ferrers diagram $F_P$ of $P$ as follows: for $i\in [1,t]$ the, $i$-th row counting from the top of $F_P$ is filled by the monomials $y^{i-1},y^{i-1}x,\ldots ,y^{i-1}x^{p_i-1}$. We let $E_P$ be the complementary set of monomials to $C_P$ and denote by $(E_P)$ the monomial ideal generated by $E_P$.
 The \emph{diagonal lengths} $T_P$ of $P$ is the Hilbert function $T_P=H(R/E_P)$.\par
 \end{definition}
   In a Ferrers diagram of monomials associated to a partition $P$ of $n$, the $x$-degrees of monomials increase as we go from left to right and the $y$-degrees increase as we go from top to bottom. We count the columns from left to right and the rows from top to bottom.  See Figure \ref{Ferrersfig} for the Ferrers diagram of the partition $P=(5,3,1)$ of diagonal lengths $T=(1,2,3,2,1)$; and Example \ref{flatHFexample} and Figure \ref{5.8figure} for that of $P=\left(10^2,4,3,2^5\right)$.  
  \begin{figure}[!h]
 \begin{center}
 \begin{ytableau}
 1&x&x^2&x^3&x^4\cr
 y&yx&yx^2\cr
y^2\cr
 \end{ytableau}
 \caption{Ferrers diagram for $P=(5,3,1)$.}\label{Ferrersfig}
 \end{center}
%

 \end{figure}\vskip 0.2cm\begin{definition}\label{hookdef}
A \emph{hook} of a partition $P$ is a subset of the Ferrers diagram $F_P$ consisting of a hook-corner $c$, an \emph{arm} 
 $(c,xc,\ldots ,\nu=x^{u-1}c)$ and a \emph{leg} $(c,yc,\ldots , \mu= y^{v-1}c)$, such that $x\nu\in E_P$ and $y\mu\in E_P$ 
 (Figure \ref{hook fig}). The \emph{arm length} is $u$ and the \emph{leg length} is $v$; the hook has arm-leg \emph{difference} 
 $u-v$. We term the monomial $\nu$ the \emph{hand}, and the monomial $\mu$ the \emph{foot} of the hook.\par
 \end{definition}

\begin{figure}[!h]
\begin{center}
	\begin{tikzpicture}[scale=.6]
	\draw[thin] 	(0,0)--(4,0)
				(0,-1)--(4,-1)
				(0,-2)--(1,-2)
				(0,-3)--(1,-3)
				(0,0)--(0,-3)
				(1,0)--(1,-3)
				(2,0)--(2,-1)
				(3,0)--(3,-1)
				(4,0)--(4,-1);			
	\draw[fill=gray!30] 	(3,0) rectangle (4,-1)
					(0,-2) rectangle (1,-3);
	\draw[black] 	node at (0.5,-0.5) {$c$}
				node at (3.5,-0.5) {$h$}
				node at (0.5,-2.5) {$f$};
         \end{tikzpicture}
         \caption{Difference-one hook with hand $h$, foot $f$, corner $c$, $P=(4,1,1)$.}\label{hook fig}
\end{center}
\end{figure}

\begin{example}\label{431ex} Let $P=(4,3,1)$. $P$ has diagonal lengths $T_P=(1,2,3,2)$. The hook with corner $x$ in the Ferrers diagram $C_P$  has arm length 3, foot length 2, hand $x^3$, foot $yx$, so has (arm $-$ leg)  difference one (Figure \ref{hook1fig}). 
\end{example}

\begin{figure}[!ht]
\begin{center}
$\begin{array}{|c|c|c|c|}
\hline
1&\cellcolor{red}x&\cellcolor{gray2}x^2&\cellcolor{gray2}x^3\\
\cline{1-3}
y&\cellcolor{gray2}yx&yx^2\\
\cline{1-3}
y^2\\
\cline{1-1}
\end{array}$
	\caption{Difference-one hook with corner $x$ for $P=(4,3,1)$.}\label{hook1fig}
\end{center}
\end{figure}\noindent
\begin{definition}[Initial ideal of $I$, and the Jordan cell $\mathbb V(E_P)$]\label{celldef}
We order the monomials of degree $i$ by $x^i<x^{i-1}y<\cdots< y^i$ (lex order).
 The \emph{initial monomial} $\mu(f)=$ in$(f)$ of a form $f=\sum_k a_ky^kx^{i-k}, a_k\in \sf k$  is the monomial $\mu(f)=y^sx^{i-s}$ of highest $y$-degree $s$ among those with non-zero coefficients $a_k$.  Given an ideal $I\subset R={\sf k}[x,y]$, defining the Artinian quotient $A=R/I$ we denote by in$(I)$ the ideal
 \begin{equation*}
{\mathrm{in}}(I)= (\{ \mathrm{in}(f), f\in I\})
 \end{equation*}
 generated by the initial monomials of all elements of $I$. We may identify $\mathrm{in}(I)$ with an ideal $E_P$ for a  partition $P=P(I)$ of diagonal lengths $T=H(A)=H(R/E_P)$. \par
 We denote by $\mathbb V(E_P)$ the affine variety parametrizing all ideals $I\subset R$ having initial ideal $E_P$ (for the affine variety structure see Theorem \ref{GTdecompthm} and \cite[Prop.~2.6]{jy-1}, or Theorem \ref{projectionthm} below).
 \end{definition}

 When counting the minimal number of generators of an ideal $I\in \mathbb V(E_P)$ we will refer to the leading terms of these genarators as corner-monomials of $E_P$. 
\begin{definition}[corner-monomial of $E_P$]\label{def_corner-mon} Let $P$ be a partition of an integer $n$. Denote by $E_P\subset R={\sf k}[x,y]$ the monomial ideal associated to $P$ (Definition \ref{EPdef}). An element of a minimal set of generators of $E_P$ is called a corner-monomial of $E_P$.
\end{definition}
\begin{example}\label{corn-mon_4_4_2_1_1} Let $P=(4,4,2,1,1)$. Then the corner-monomials of $E_P$ are $x^4, x^2y^2,xy^3$  and $y^5$ 
(see Figure \ref{fig_corn-mon}).
 \begin{figure}[!h]
\begin{center}
	\begin{tikzpicture}[scale=.8]
	\draw[dashed,very thin] 	(0,0)--(4,0)
						(0,-1)--(4,-1)
						(0,-2)--(2,-2)
						(0,-3)--(1,-3)
						(0,-4)--(1,-4)
						(0,0)--(0,-5)
						(1,0)--(1,-3)
						(2,0)--(2,-2)
						(3,0)--(3,-2);
	\draw[thin, blue,fill=black!5,rounded corners=3mm] (4,0) rectangle (5,-1)
							(2,-2) rectangle (3,-3)
							(1,-3) rectangle (2,-4)
							(0,-5) rectangle (1,-6);
	\draw[blue] node at (4.5,-0.5) {\footnotesize{$x^4$}}
			    node at (2.5,-2.5) {\footnotesize{$x^2y^2$}}
			    node at (1.5,-3.5) {\footnotesize{$xy^3$}}
			    node at (0.5,-5.5) {\footnotesize{$y^5$}};
	\draw[thick,orange] (6,0)--(4,0)--(4,-2)--(2,-2)--(2,-3)--(1,-3)--(1,-5)--(0,-5)--(0,-7);
         \end{tikzpicture}
         \caption{Ferrers diagram of $P=(4,4,2,1,1)$ : \textsl{corner-monomials of $E_P$} (in blue).}\label{fig_corn-mon}
\end{center}
\end{figure}
\end{example}

For the first of the next two results see \cite[Theorems 2.9, 2.12]{Ia}, or \cite[\S 3-B,Theorem 3.12, \S 3-F]{IY}; the cellular decomposition in the second was shown by
 L. G\"{o}ttsche \cite{Got}, and the hook count for the dimension is \cite[Theorem~3.12]{IY}. Further results involving the intersections of closures of cells was shown by the last author in \cite{jy-1,jy-2}, relying in part on methods of J. Brian\c{c}on \cite{brian}. Recall that we denote by $\mathcal P(T)$ the set of all partitions of $n=|T|$ having diagonal lengths $T$.  We denote by $\delta_i(T)$ the difference $\delta_i(T)=t_{i-1}-t_{i}$, for $i\ge d$.
\begin{proposition}[The smooth projective variety $\G_T$]\label{GTdimthm}\cite[Thm. 3.13]{Ia}.
The variety $\G_T$ parametrizing all ideals $I$ of $R={\sf k}[x,y]$ satisfying $H(R/I)=T$ is a smooth irreducible projective variety, that is locally an affine space of dimension $\sum_{i\ge d} (\delta_i+1)(\delta_{i+1})$: it has a connected cover by opens in the same affine space.
\end{proposition}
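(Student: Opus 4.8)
The plan is to exhibit directly the ``connected cover by opens in the same affine space'' that the statement describes, since this produces projectivity, smoothness, the dimension count, and irreducibility at once. I begin by realizing the image of $\iota$ inside $X=\prod_{d\le i\le{\sf j}}\Grass(t_i,R_i)$: a tuple $(I_d,\dots,I_{\sf j})$ with $\operatorname{codim}I_i=t_i$ is the sequence of graded pieces of an ideal exactly when $xI_i\subseteq I_{i+1}$ and $yI_i\subseteq I_{i+1}$ for all $i$, these conditions generating closure under all of $R$. Each is the vanishing of the induced map $I_i\otimes R_1\to R_{i+1}/I_{i+1}$ and so is closed on $X$; hence $\G_T$ is a closed, therefore projective, subvariety of $X$.

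Next I would set up explicit affine charts. For $P\in\mathcal P(T)$ let $U_P\subseteq\G_T$ be the open locus of ideals $I$ such that the standard monomials $C_P\cap R_i$ form a basis of $R_i/I_i$ in every degree $i$ (the non-vanishing of the corresponding minors in each Grassmannian factor). On $U_P$ every $I$ has a unique graded normal form: each minimal generator $m$ of $E_P$, of degree $i$, satisfies $m\equiv\sum_{n\in C_P\cap R_i}c_{m,n}\,n\pmod I$ with the $c_{m,n}$ regular on $U_P$, and these scalars determine $I$. This embeds $U_P$ as a locally closed subset of an affine space $\mathbb A^{M_P}$, where $M_P=\sum_i\nu_i(P)t_i$ and $\nu_i(P)$ is the number of minimal generators of $E_P$ in degree $i$. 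The $c_{m,n}$ are \emph{not} independent in general: reducing each minimal first syzygy of $E_P$ along the two generators it couples, and equating the resulting normal forms modulo $I$, yields a polynomial relation among the $c_{m,n}$, one for each syzygy that lands in a degree $\le{\sf j}$.

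The heart of the matter, and the step I expect to be the main obstacle, is that these syzygy relations can be solved in \emph{triangular} form: each expresses one distinguished coordinate $c_{m,n}$ as a polynomial in the others, triangularly with respect to a fixed ordering of the pairs $(m,n)$, so that eliminating them realizes $U_P$ as a graph over the remaining coordinates, whence $U_P\cong\mathbb A^N$ for a value $N$ to be identified. This is precisely where the two-variable hypothesis is essential, and it is the point at which standard-basis input (in the spirit of the Brian\c{c}on--Galligo normal form) governs the shape of the relations. Smoothness of $\G_T$ then follows, each chart being an affine space; alternatively one may deduce smoothness a priori, since $\G_T$ is a union of components of the fixed locus of the scaling $\mathbb G_m$ on the smooth scheme $\Hilb^n(\mathbb A^2)$ (Fogarty's theorem), and the fixed locus of a torus on a smooth variety is smooth. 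Because the charts have equal dimension $N=\dim\G_T$, the redundant count $M_P$ minus the number of independent syzygy relations is independent of $P$.

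It remains to compute $N$ and to establish irreducibility. Evaluating at a \emph{generic} $I\in\G_T$, whose minimal generators are chosen as independently as possible so that all minimal relations occur in degrees strictly above ${\sf j}$, the syzygy relations impose nothing and $N=\dim T_{[I]}\G_T=\dim\Hom_R(I,R/I)_0=\sum_i\nu_i(I)t_i$; the numerology of the minimal resolution of $R/I$ then rewrites this as $\sum_{i\ge d}(\delta_i+1)\delta_{i+1}$. Finally, each chart $U_P\cong\mathbb A^N$ is irreducible and contains the monomial point $E_P$; since $\mathcal P(T)$ is connected under elementary box moves and the charts attached to box-adjacent partitions overlap (one writes down an ideal standard for both staircases), the nerve of $\{U_P\}$ is connected. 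Thus $\G_T$ is connected, and being smooth it is irreducible, giving the asserted smooth irreducible projective variety with a connected cover by opens each isomorphic to $\mathbb A^N$, $N=\sum_{i\ge d}(\delta_i+1)\delta_{i+1}$.
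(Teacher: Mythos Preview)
The paper does not supply its own proof of this proposition; it is quoted from \cite[Thm.~3.13]{Ia}. Your overall strategy---closed embedding in the product of Grassmannians, normal-form open charts $U_P$ indexed by $P\in\mathcal P(T)$, and Brian\c con--Galligo triangularisation of the syzygy constraints to identify each chart with an affine space---is essentially the argument carried out in \cite{Ia}.

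There is, however, a concrete error in your dimension computation. You claim that for generic $I\in\G_T$ ``all minimal relations occur in degrees strictly above ${\sf j}$'', whence $\dim\Hom_R(I,R/I)_0=\sum_i\nu_i(I)\,t_i$. This already fails for $T=(1,2,3,4,2,1)$: here $d=4$, ${\sf j}=5$, $(\delta_4,\delta_5,\delta_6)=(2,1,1)$; the generic $I$ has three generators of degree $4$, and the Hilbert-series numerator $(1-z)^2\sum t_iz^i=1-3z^4+z^5+z^7$ exhibits a minimal relation in degree $5={\sf j}$. Your count gives $3\cdot t_4=6$, while the correct dimension is $\sum_{i\ge d}(\delta_i+1)\delta_{i+1}=3+2=5$; the discrepancy is precisely the contribution $t_5=1$ of that relation to the map $\bigoplus A_{a_i}\to\bigoplus A_{b_j}$ whose kernel is $\Hom_R(I,R/I)_0$. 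To repair this step you must either retain the relations of degree $\le{\sf j}$ in the tangent-space calculation (arguing that the map is surjective for generic $I$ and subtracting $\sum_j t_{b_j}$), or bypass the tangent space entirely and count free parameters directly on one chart. Separately, you rightly flag the triangular solvability of the syzygy relations on $U_P$ as ``the heart of the matter''; bear in mind that this step is the genuine content of the theorem, not a formality.
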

\begin{theorem}[Cellular decomposition of $\G_T$]\label{GTdecompthm}\cite{jy-1,jy-2}.
The Jordan cell $\mathbb V(E_P)$ is an affine space of dimension equal to the total number of difference-one hooks in $C_P$, viewed as the Ferrers diagram of the partition $P$ (Definition \ref{EPdef}).\par
The variety $\G_T$ has a finite decomposition into affine cells,
\begin{equation}
\G_T=\bigcup_{P\in \mathcal P(T)}\mathbb V(E_P).
\end{equation}
\end{theorem}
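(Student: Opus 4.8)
The plan is to prove the two assertions separately: the set-theoretic decomposition is formal, while the affine-space structure together with the dimension formula I would obtain by realizing each cell as a Bialynicki--Birula attracting set for a torus action on the smooth variety $\G_T$, and then computing tangent weights at the fixed points.

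\emph{Decomposition.} By Definition~\ref{celldef} every ideal $I\in\G_T$ has a well-defined $y$-initial ideal $\mathrm{in}(I)=E_{P(I)}$ for a unique partition $P(I)$, and since passing to the initial ideal is a flat degeneration one has $H(R/E_{P(I)})=H(R/I)=T$; hence $P(I)\in\mathcal P(T)$ in the sense of Definition~\ref{EPdef}. Thus each $I$ lies in exactly one cell $\mathbb V(E_P)$, and because $\mathcal P(T)$ is finite this already exhibits $\G_T=\bigcup_{P\in\mathcal P(T)}\mathbb V(E_P)$ as a finite disjoint union.

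\emph{Affine-space structure.} I would let $\mathbb G_m$ act on $R={\sf k}[x,y]$ through the one-parameter subgroup $\lambda(t)\cdot x=x,\ \lambda(t)\cdot y=t^{-1}y$, which preserves the grading and the Hilbert function and so acts on $\G_T$. Its fixed points are exactly the monomial ideals, i.e.\ the $E_P$ with $P\in\mathcal P(T)$, and they are isolated. With this normalization $\lim_{t\to 0}\lambda(t)\cdot I$ is precisely the highest-$y$-degree initial ideal of Definition~\ref{celldef}, so $\mathbb V(E_P)$ is the attracting set of the fixed point $E_P$. Since $\G_T$ is smooth and projective (Proposition~\ref{GTdimthm}), the Bialynicki--Birula theorem~\cite{Bia}---the route taken by G\"ottsche~\cite{Got}---then gives that each attracting set $\mathbb V(E_P)$ is isomorphic to an affine space $\mathbb A^{N(P)}$, where $N(P)=\dim T^{+}_{E_P}\G_T$ is the dimension of the subspace of the Zariski tangent space on which $\lambda$ acts with positive weight.

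\emph{Dimension count.} It remains to identify $N(P)$ with the number of difference-one hooks of $C_P$. I would compute the Zariski tangent space as $T_{E_P}\G_T=\Hom_R(E_P,R/E_P)_0$ and decompose it into $\lambda$-eigenspaces. A deformation direction sends a monomial of $E_P$ to a standard monomial $w\in C_P$ of the same total degree and carries $\lambda$-weight equal to the drop in $y$-degree, so the attracting directions are exactly those lowering the $y$-degree. The positive-weight part is then matched with the difference-one hooks of Definition~\ref{hookdef}: a hook with corner $c$, hand $\nu=x^{u-1}c$ and foot $\mu=y^{v-1}c$ of arm-leg difference $u-v=1$ satisfies $\deg(y\mu)=\deg\nu$ and $\deg_y(y\mu)>\deg_y\nu$, and corresponds to the positive-weight (namely $v$) deformation moving $y\mu\in E_P$ toward $\nu\in C_P$; conversely every positive-weight direction arises so. Summing, $N(P)$ is the number of difference-one hooks, the count established in \cite[Theorem~3.12]{IY}, consistently with $\dim T_{E_P}\G_T=\dim\G_T=\sum_{i\ge d}(\delta_i+1)\delta_{i+1}$ from Proposition~\ref{GTdimthm}.

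The main obstacle is this last identification. The affine-space structure comes essentially for free once smoothness of $\G_T$ is invoked, but matching the positive tangent weights with difference-one hooks requires genuine care: one must check that the module (syzygy) relations among the generators of $E_P$ impose no conditions on the degree-zero, positive-weight part---so that the naive coordinate count is not cut down---and then verify combinatorially that exactly the arm-leg $=1$ hooks, and no others, index these directions. This is the point at which Brian\c{c}on's standard-basis analysis enters, and it is the step I expect to demand the most work.
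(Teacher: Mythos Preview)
The paper does not prove this theorem; it is quoted from the literature. The text preceding the statement attributes the cellular decomposition to G\"ottsche~\cite{Got} (via Bialynicki--Birula) and the difference-one-hook dimension formula to \cite[Theorem~3.12]{IY}, with the citation \cite{jy-1,jy-2} pointing to further results on closures of cells. So there is no ``paper's own proof'' to compare against; what you have written is a sketch of the cited proofs.

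Your outline is sound and is essentially the G\"ottsche route. Two comments. First, there is an alternative approach, closer to what is actually used elsewhere in this paper (see the proof of Theorem~\ref{projectionthm}): one shows directly that $\mathbb V(E_P)\to\mathbb V(E_{P'})$, $I\mapsto(I:x)$, is a trivial affine-space fibration whose fibre dimension is the number of difference-one hooks with foot in the first column, and then inducts on the number of columns. This bypasses the tangent-weight computation entirely and uses Brian\c{c}on's standard bases instead; it is the method of \cite{jy-1,jy-2,Y0}.

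Second, in your tangent-space argument the pairing ``hook $\leftrightarrow$ deformation $y\mu\mapsto\nu$'' is not literally how one indexes a basis of $\Hom_R(E_P,R/E_P)_0$: the monomial $y\mu$ is a horizontal-border monomial but need not be a minimal generator, so this assignment does not by itself define an $R$-homomorphism. The honest count (as in \cite{IY}) goes by first describing a basis of $\Hom_R(E_P,R/E_P)_0$ in terms of generators and standard monomials, checking which elements have positive $\lambda$-weight, and then bijecting with difference-one hooks; the hook data $(c,\nu,\mu)$ enter, but not via the single pair $(y\mu,\nu)$. You correctly flag this step as the one needing care.
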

 \subsection{Hook code of $P$.}\label{hookcodesec}
We review the hook code, using results from \cite{IY,IY2}. First, given $T$ satisfying
Equation \eqref{Teq} we define a sequence $\mathfrak B(T)$ of rectangular partitions or boxes. We let $\delta_i(T)=t_{i-1}-t_i$ for $i\ge d(T)$.  We set
\begin{align}\label{BTeq}
\mathfrak B(T)&=\left(\mathfrak B_d(T),\mathfrak B_{d+1}(T),\ldots ,\mathfrak B_{\sf j}(T)\right), \text { where }\notag\\  {\mathfrak B}_i(T)&=(\delta_{i+1})\times (1+\delta_i), \text { a rectangular box},
\end{align}
with height $\delta_{i+1}$ and base $1+\delta_i$: so ${\mathfrak B}_i(T)$ has $\delta_{i+1}$ parts, each $1+\delta_i$.
We order the monomials of degree $i$ by $x^i<x^{i-1}y<\cdots< y^i$ (lex order); certain of these monomials are hands of degree-$i$ hooks of $P$, that is end elements of rows of $C_P$ from Definition \ref{EPdef}, and we order these correspondingly.
\begin{definition}\label{hookcodedef} Suppose that the partition $P$ has diagonal lengths $T$. The (difference-one) \emph{hook code} of $P$ is the sequence
\begin{equation}\label{hookcodeeq}
\mathfrak Q(P)=\left(\mathfrak h_d(P),\ldots  ,\mathfrak h_{\sf j}(P)\right)
\end{equation}
 where $ \mathfrak h_i(P)$ is a partition that enumerates the difference-one hooks of hand-degree $i$, according to their $\delta_{i+1}$ degree-$i$ hands. That is, the $k$-th part of $\mathfrak h_i(P)$ is the number of difference-one hooks having the $k$-th possible degree-$i$ hand. 
 \end{definition}
 \begin{remark}\label{h_i_fits_in_box_i} It is not hard to see that the number of difference-one hooks per hand is in the interval $[0,\delta_{i}+1]$, and that is non-increasing: so $\mathfrak h_i(P)$ is a partition, and  $\mathfrak h_i(P)\subset \mathfrak B_i(T)$: the degree-$i$ hook partition fits into the box  $\mathfrak B_i(T)$.\par
Thus, the code is determined by arranging the difference-one hooks of $P$ first, according to their hand-degree $i$, then according to their ``hand monomial,'' determining for each degree $i\in [d,{\sf j}]$ a partition $\mathfrak h_i(P)$.
\end{remark}
 We denote by $\mathcal Q(T)$ the set of all $({\sf j}+1-d)$-tuples of partitions $(\mathfrak h_d,\ldots, \mathfrak h_{\sf j})$ satisfying, $\mathfrak h_i\subset \mathfrak {\mathfrak B}_i(T)$. 
Here $\mathcal Q(T)$ is a lattice under the product structure given by inclusion for each component $\mathfrak h_i$: that is, $\mathcal Q\le \mathcal Q^\prime$ if each $\mathfrak h_i\subset  \mathfrak h_i^\prime$, in the sense that the Ferrers diagram for $\mathfrak h_i$ fits inside that of $\mathfrak h_i^\prime$.\par
When $T$ is a sequence satisfying Equation \eqref{Teq} - of socle degree $\sf j$ - we write $T^\vee$ for the partition obtained from $T$ as the conjugate (switch rows and columns in the Ferrers graph) of the partition having ${\sf j}+1$ parts $\{t_0,t_1,\ldots, t_{\sf j}\}$. Thus, $T^\vee$ gives the lengths of the rows of the bar-graph of $T$. 
The partition $T^\vee$ always has the maximum hook code $\mathfrak B(T)$ possible for a partition of diagonal lengths $T$.\footnote{In the study of Lefschetz properties and Jordan type of Artinian algebras, multiplication by $x$ having Jordan type $T^\vee$ on $A$ corresponds to the the strong Lefschetz property of the pair $(A,x)$ \cite[\S~2.3]{IMM}.}
\begin{example}[Hook code for $P=(6,3,3,3)$]\label{6333ex}
Let $T=(1,2,3,4,3,2,0)$  where $d=4$; we have  $\delta_4=4-3=1,\delta_5=3-2=1,\delta_6=2-0=2.$ Then $\mathfrak B(T)=\left(\mathfrak B_4,\mathfrak B_5\right)=\left((1\times 2)_4,(2\times 2)_5\right)$. The partition $T^\vee=(6,5,3,1)$ has the maximum hook code $\mathfrak Q(T^\vee)=\mathfrak B(T)$.
But $P=(6,3^3)$ has the hook code $\mathfrak Q(P)=\left(1_4,(2,1)_5\right)$: the degree four hand monomial is $y^2x^2$ with a single difference-one hook, with corner $y^2$; the degree-5 hand monomials are $x^5$ with two hooks with corners $x,x^4,$ and $y^2x^3$ with one hook, corner $y^3x$. See Figure \ref{6333fig} where we visualize the hooks by showing their corners, blue for degree 4 and red for degree~5. 
\end{example}
\begin{figure}[!h]
 \begin{center}
$ \begin{array}{|c|c|c|c|c|c|}
\hline
&\cellcolor{red}\bullet&&\phantom{ab}&\cellcolor{red}\bullet&h_{5}\\
\hline
&&\\
\cline{1-3}
\cellcolor{blue}\bullet&&h_4\\
\cline{1-3}
&\cellcolor{red}\bullet&h_5\\
\cline{1-3}
\end{array}$
 \caption{Hook code for $P=(6,3,3,3): \mathfrak Q(P)=\left(1_4,(2,1)_5\right).$ }\label{6333fig}
 \end{center}
 \end{figure}
The following is stated as part of \cite[Theorem 3.27]{IY}, and shown in \cite[Theorem~1.17]{IY2}. Recall that $\mathcal P(T)$ is the set of partitions having diagonal lengths $T$. We denote by $\mathfrak q: \mathcal P(T)\to \mathcal Q(T)$ the hook code map taking $P$ to $\mathfrak Q(P)$. For a partition $\mathfrak h_i\subset \mathfrak B_i$ we denote by $\mathfrak h_i^c$ the complement of $\mathfrak h_i$ in ${\mathfrak B}_i$. For an element $\mathfrak h=(\mathfrak h_d,\ldots ,\mathfrak h_{\sf j})\in \mathcal Q(T)$, we denote by $\mathfrak h^c=(\mathfrak h_d^c,\ldots ,\mathfrak h_{\sf j}^c)$ the complement in $\mathcal B(T)$. Recall that, given $P\in \mathcal P(T)$ we denote by $P^\vee$ the conjugate partition (switch rows and columns in the Ferrers graph of $P$); evidently $P^\vee\in \mathcal P(T)$.
\begin{theorem}\label{PTtoQTthm} Let $T$ satisfy Equation \eqref{Teq}, and let $P\in \mathcal P(T)$. Then 
the map $\mathfrak q: \mathcal P(T)\to \mathcal Q(T)$ is an isomorphism of sets satisfying $\mathfrak q(P^\vee)=(\mathfrak q(P))^c$. 
\end{theorem}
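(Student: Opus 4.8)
The plan is to handle all three assertions — the containment $\mathfrak h_i(P)\subset \mathfrak B_i(T)$, the bijectivity of $\mathfrak q$, and the identity $\mathfrak q(P^\vee)=\mathfrak q(P)^c$ — through a single coordinate description of difference-one hooks. Writing $P=(p_1\ge\cdots\ge p_t)$ with column lengths $p^\vee_c$ (the parts of $P^\vee$), and indexing the cells of $C_P$ by $(r,c)$ (row $r$, column $c$, monomial $y^{r-1}x^{c-1}$), one checks directly from Definition \ref{hookdef} that a cell $(r,c)\in C_P$ is the corner of a hook with hand $(r,p_r)$ and foot $(p^\vee_c,c)$, of arm length $u=p_r-c+1$ and leg length $v=p^\vee_c-r+1$; hence its arm$-$leg difference is $(p_r+r)-(c+p^\vee_c)$. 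Thus the hook is difference-one exactly when $(p_r+r)-(c+p^\vee_c)=1$, its hand has degree $r+p_r-2$, and its foot has degree $c+p^\vee_c-2$, one less than the hand degree. First I would record the two consequences used throughout: a difference-one hook of hand-degree $i$ is determined by its (hand, foot) pair, and conversely, once a degree-$i$ row-end (hand) at row $r$ and a degree-$(i-1)$ column-end (foot) at column $c$ are fixed, the difference is \emph{forced} to equal $1$, so such a hook exists iff the corner lies in $C_P$, i.e.\ iff $r+c\le i+1$.

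For the containment (well-definedness of $\mathfrak q$ into $\mathcal Q(T)$), I would fix $i$ and count. A difference-one hook needs leg $\ge 1$, hence arm $\ge 2$, forcing $p_r\ge 2$; so the hands are the degree-$i$ row-ends with arm-room, and comparing consecutive anti-diagonals by the shift map $(r,c)\mapsto(r,c+1)$ from diagonal $i$ into diagonal $i+1$ shows these number exactly $\delta_{i+1}=t_i-t_{i+1}$, the height of $\mathfrak B_i(T)$. For a fixed hand at row $r$, the difference-one hooks correspond to degree-$(i-1)$ feet at columns $c$ with $(r,c)\in C_P$, i.e.\ with $c\le i+1-r$, so their number $N_P(r)$ is \emph{non-increasing} in $r$ and is bounded by the total number of degree-$(i-1)$ column-ends, which the transposed shift count bounds by $1+\delta_i$, the base of $\mathfrak B_i(T)$. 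Ordering hands by increasing $y$-degree (i.e.\ increasing $r$), the sequence $\big(N_P(r)\big)_r$ is then a partition with at most $\delta_{i+1}$ parts, each at most $1+\delta_i$; this is precisely $\mathfrak h_i(P)\subset\mathfrak B_i(T)$.

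To prove $\mathfrak q$ is a bijection I would establish injectivity by reconstruction: processing the anti-diagonals in order, the slot data $N_P(\cdot)$ at degree $i$ records, via the incidence $r+c\le i+1$, exactly where the staircase profile of $P$ turns between diagonals $i$ and $i+1$, so the full code determines the profile and hence $P$. Surjectivity then follows by matching cardinalities: a partition fits in $\mathfrak B_i(T)$ in $\binom{\delta_i+\delta_{i+1}+1}{\delta_{i+1}}$ ways, so $|\mathcal Q(T)|=\prod_{i=d}^{\sf j}\binom{\delta_i+\delta_{i+1}+1}{\delta_{i+1}}$, which equals the known product count for $|\mathcal P(T)|$; alternatively one reverses the reconstruction to exhibit a partition realizing any prescribed code.

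Finally, for $\mathfrak q(P^\vee)=\mathfrak q(P)^c$ I would use that transposition preserves anti-diagonals — so $P^\vee\in\mathcal P(T)$ and the boxes $\mathfrak B_i(T)$ are unchanged — while interchanging rows with columns and the hand of a hook with its foot. Fixing $i$, the hand-slots of $P^\vee$ are the degree-$i$ column-ends of $P$, and the content of $\mathfrak h_i(P^\vee)$ at a slot counts difference-one hooks of $P^\vee$, equivalently hooks of $P$ read through the transpose. The heart of the matter is a slot-reversal complementation identity: pairing the $k$-th hand-slot of $P^\vee$ with the $(\delta_{i+1}+1-k)$-th hand-slot of $P$, the two hook-counts sum to the base $1+\delta_i$. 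Granting this, reversing the hand order and complementing heights in $\mathfrak B_i(T)$ carries $\mathfrak h_i(P)$ to $\mathfrak h_i(P^\vee)$, giving $\mathfrak h_i(P^\vee)=\mathfrak h_i(P)^c$. I expect this identity, together with pinning down the exact box dimensions in the containment step, to be the main obstacle: both require tracking the staircase across two consecutive anti-diagonals and carefully handling the boundary contribution of the first row and column — the source of the ``$+1$'' in $1+\delta_i$ and of the precise reversal of slots.
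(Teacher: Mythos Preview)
The paper does not actually prove this theorem: it is quoted from the literature, stated as part of \cite[Theorem~3.27]{IY} and proved in \cite[Theorem~1.17]{IY2}. So there is no in-paper argument to compare against; what you have written is an independent sketch of a proof for a result the authors take as input.

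Your sketch is structurally sound---the coordinate description of difference-one hooks via $(p_r+r)-(c+p^\vee_c)=1$ is the right organizing principle, and your argument that $\mathfrak h_i(P)$ has at most $\delta_{i+1}$ parts of size at most $1+\delta_i$ is on the right track. Two points deserve caution, however. First, your surjectivity argument by cardinality matching is in danger of being circular: the product formula $|\mathcal P(T)|=\prod_i\binom{\delta_i+\delta_{i+1}+1}{\delta_{i+1}}$ that you invoke is, in \cite{IY}, itself established \emph{via} the hook-code bijection (and in this paper it appears only after Theorem~\ref{PTtoQTthm} has been assumed). You would need an independent count of $|\mathcal P(T)|$, or---better---carry out the explicit inverse construction you allude to, building $P$ diagonal by diagonal from an arbitrary element of $\mathcal Q(T)$. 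Second, you yourself flag the slot-reversal complementation identity for $\mathfrak q(P^\vee)=\mathfrak q(P)^c$ as the ``main obstacle'' and do not prove it; this step does require care, since under transposition the degree-$i$ hands of $P^\vee$ correspond to degree-$i$ \emph{feet} of $P$ (not to degree-$i$ hands of $P$), so the pairing between the $\delta_{i+1}$ hand-slots of $P$ and those of $P^\vee$ is not the obvious one and must go through the interplay between the two consecutive diagonals. As written, then, your proposal is a plausible outline rather than a proof.
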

We endow $\mathcal P(T)$ with the structure of a lattice via the isomorphism $\mathfrak q$ to $\mathcal Q(T)$
(see Definition \ref{hookcodedef}). \footnote{There is an alternative poset structure  $\mathcal P_{alt}(T)$ on $\mathcal P(T)$, related to the sequences of degree-$i$ monomials in $C_P$. The inverse $\mathcal Q(T)\cong P(T)\to \mathcal P_{alt}(T)$ is an inclusion of posets, not an isomorphism of lattices as stated incorrectly in \cite[Theorem 3.27]{IY}. See the discussion in \cite{IY2}.}\par
The second and last author showed that the dimension of the cell $\mathbb V(E)$ is the total number of difference-one hooks in the partition $P_E$ determined by $E$ (Theorem \ref{GTdecompthm}): this is just the height of $\mathfrak Q(P)$ in the lattice $\mathcal Q(T)$.  It follows from the A. Bialynicki-Birula result \cite{Bia}  that over the complexes, the Betti numbers of $\G_T$ may be deduced from the cellular decomposition \cite[Theorem~3.28, Theorem~3.29]{IY}. \par
\subsection{The cell $\mathbb V(E_P)$, and its component decomposition.}\label{prodsec}
Throughout this section $T=(1, 2, \dots , d, t_d, \dots , t_{\sf j}, 0)$ will be a Hilbert function satisfying Equation \eqref{Teq} and $P$ will be a partition of diagonal lengths $T$. 
We denote by $E_P$ the monomial ideal associated to $P$ and let ${\mathbb V}(E_P)$ be the Jordan cell of $\G_T$ associated to $P$. Our purpose here is to give a description of ${\mathbb V}(E_P)$ as a product of cells of  ``small'' Grassmannians
(Theorem \ref{projectionthm}).
\subsubsection{The component Hilbert functions of $T$.}
Let $P$ be a partition with Hilbert function $T=\left(1,\dots ,d,t_d,\dots ,t_{\sf j},0\right)$ and difference-one hook code
 $\mathfrak{Q}(P)=\big( \mathfrak{h}_d,\mathfrak{h}_{d+1},\dots ,\mathfrak{h}_{\sf j}\big)$ (Definition~\ref{hookcodedef}). For $i=d, \dots, {\sf j}$ we let $T_i$ be the single-block Hilbert function 
  \begin{align}\label{HFdecomposition}
T_i&=(1, \dots, d_i-1,d_i,t_{d_i},0),
\end{align}
 where $d_i=\delta_i(T)+\delta_{i+1}(T)$,  and $t_{d_i}=\delta_{i+1}$.  
 We  set $t_{d-1}:=d$ and $t_{{\sf j} +1}:=0$.
(There is a shift in degrees, $T_i$ parametrizes ideals of order (initial degree) $d_i=t_{i-1}-t_{i+1}=\delta_i(T)+\delta_{i+1}(T)$).\par\vskip 0.2cm
\subsubsection{Definition of the component partitions of $P$.}
In the next pages we give a construction of the component partitions $P_i$ corresponding to the single-block Hilbert function $T_i$, from the partition $P$ having diagonal lengths $T$ (Definition \ref{Pidef1}).\par

We will first define the $i$-th block of $P$, denoted by $P_i$ (Definition \ref{Pidef1}). We will show that it is the partition with diagonal lengths $T_i$ and hook code the $i$-th component
$\mathfrak{Q}(P_i)=\big(\mathfrak{h}_i\big)=\big(\mathfrak{h}_i(P)\big)$ of the hook code
of $P$ (Proposition~\ref{hookP(i)lem}). This depends on defining border (foot) and hand monomials, respectively, of $P$ in degree $i$, giving vector spaces $V_{i1}, V_{i2}$, respectively. We will show that the partition $P_i$ may be derived simply from the monomials $V_{i1}\cup V_{i2}$ (Proposition~\ref{rm_rows_cols_P_get_P_i}).  A reader on a first look may just use this  Proposition as a definition of $P_i$.

Recall that for a Hilbert function $T=(1, 2, \dots , d, t_d, \dots , t_{\sf j}, 0)$, we set $\delta_i=t_{i-1}-t_i$ ($d\leq i \leq {\sf j}+1$). \\
Given $P=(p_1, \dots , p_s)$ a partition of diagonal lengths $T$, for each $i$ ($d\leq i\leq {\sf j}$) we construct a vector space $V_i$ of 
dimension $n_i=\delta_i+\delta_{i+1}+1$ such that to each element $I$ of the cell ${\mathbb V}(E_P)$ we can associate a subvector space $I_{V_i}$ of $V_i$ with $\displaystyle{\dim_k\left(I_{V_i}\right)=\delta_i+1}$. So the vector space $I_{V_i}$ is an element of 
$\Grass(\delta_i+1, V_i)$ and belongs to a cell described by the partition $\mathfrak{h}_i(P)$, the degree-$i$ block of the difference-one hook code of $P$. 

\medskip
\noindent
We first define (as in \cite{IY2} but in a more strict way) the
\textsl{horizontal-border monomials} and \textsl{vertical-border monomials} of $E_P$.

\medskip
\noindent
\begin{definition}\label{defi_horiz_vertic_border_monom}
Let $P$ be a partition of diagonal lengths $T$. Denote by $E_P$ the monomial ideal of ${\sf k}[x,y]$ associated to $P$. 
\begin{enumerate}[(i).]\item We say a monomial $x^ay^b\in E_P$ is a \textsl{horizontal-border monomial} of $E_P$ if $(b>0$ and $x^ay^{b-1}\notin E_P)$. 
\item We say a monomial $x^ay^b\in E_P$ is a \textsl{vertical-border monomial} of $E_P$ if  $a>0$ and $x^{a-1}y^{b}\notin E_P$. 
\end{enumerate}
Denote by $A(P)_i$ the set of degree-$i$ \textsl{horizontal-border monomials} of $E_P$,  
$B(P)_i$ the set of degree-$i$ \textsl{vertical-border monomials} of $E_P$ and $(C_P)_i$ the set of degree-$i$ monomials that are not in $E_P$. 
\end{definition}
\begin{example}\label{example-border-monomials}
Let $P=(9,7^2,4^2,2,1^2)\in\mathcal{P}(T)$ where $T=(1,2,3,4,5,6,7,5,2,0)$, see Figure \ref{972ex_fig_a}. The sets of degree-$7$ and $8$ horizontal-border monomials of $E_P$ are given by $A(P)_7=\{x^4y^3,x^2y^5,xy^6\}$ and  $A(P)_8=\{x^7y,x^5y^3,x^3y^5,y^8\}$. Also we observe that the vertical-border monomials of degree $7$, $B(P)_7$, is equal to $A(P)_7$. In degree $8$ the only vertical-border monomial in $E_P$ is  $x^7y$.
\end{example}

\smallskip
\noindent
\begin{claim}\label{borderclaim}
\begin{enumerate}[(i).]
\item $
\displaystyle{
\left|A(P)_i\right|=
\left\{
\begin{array}{l}
t_{i-1}-t_i+1=\delta_i+1, \ \mbox{if $x^i\in (C_P)_i$} 
\\
t_{i-1}-t_i=\delta_i,  \ \mbox{if $x^i\notin (C_P)_i$} 
\end{array}
\right.
}
$
\item $
\displaystyle{
\left|B(P)_{i+1}\right|=
\left\{
\begin{array}{l}
t_{i}-t_{i+1}+1=\delta_{i+1}+1, \ \mbox{if $y^{i+1}\in (C_P)_{i+1}$} 
\\
t_{i}-t_{i+1}=\delta_{i+1},  \ \mbox{if $y^{i+1}\notin (C_P)_{i+1}$} 
\end{array}
\right.
}
$
\end{enumerate}
\end{claim}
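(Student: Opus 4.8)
The plan is to prove part (i) directly for all $P$ and all degrees, and then to deduce part (ii) from it by the conjugation symmetry $P\mapsto P^\vee$ of Theorem~\ref{PTtoQTthm}. Conjugation reflects the Ferrers diagram across the main diagonal, interchanging the variables $x$ and $y$ via $x^ay^b\leftrightarrow x^by^a$; it preserves both the total degree and the diagonal lengths $T$ (so each $\delta_i(T)$ is unchanged), it identifies $C_{P^\vee}$ with $C_P$, and it carries horizontal-border monomials of $E_{P^\vee}$ to vertical-border monomials of $E_P$, with $x^i\in (C_{P^\vee})_i$ if and only if $y^i\in (C_P)_i$. Applying (i) to $P^\vee$ in degree $i+1$ therefore yields (ii) for $P$, so the entire content lies in (i).

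For (i) the first step is to identify $A(P)_i$ with a set of monomials one degree lower. Division by $y$ sends a degree-$i$ horizontal-border monomial $x^ay^b$ — which by Definition~\ref{defi_horiz_vertic_border_monom} satisfies $b>0$ and $x^ay^{b-1}\in C_P$ — to the degree-$(i-1)$ monomial $x^ay^{b-1}\in (C_P)_{i-1}$. This map is injective, its inverse being multiplication by $y$, and its image is exactly $\{\,m\in (C_P)_{i-1}: ym\in E_P\,\}$, the degree-$(i-1)$ monomials of $C_P$ lying at the bottom of their column. Hence $|A(P)_i|$ equals the number of degree-$(i-1)$ monomials of $C_P$ whose $y$-multiple lies in $E_P$, which I count by counting the complementary set.

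The counting step uses that $C_P$ is an \emph{order ideal} of monomials: by the row description in Definition~\ref{EPdef} together with $p_1\ge p_2\ge\cdots$, every divisor of a monomial in $C_P$ again lies in $C_P$. View multiplication by $y$ as a map from $(C_P)_{i-1}$ into $R_i$; a monomial $m\in (C_P)_{i-1}$ satisfies $ym\in C_P$ exactly when it contributes to the image, and that image is precisely the set of degree-$i$ monomials of $C_P$ of positive $y$-degree. Indeed, any such $x^cy^b$ with $b\ge 1$ equals $y\cdot(x^cy^{b-1})$ with $x^cy^{b-1}\in (C_P)_{i-1}$ by the order-ideal property, while the only degree-$i$ monomial of $y$-degree zero is $x^i$. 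Thus the image has size $t_i-1$ when $x^i\in (C_P)_i$ and $t_i$ otherwise, so the number of $m\in (C_P)_{i-1}$ with $ym\in E_P$ equals $t_{i-1}-t_i+1=\delta_i+1$ in the first case and $t_{i-1}-t_i=\delta_i$ in the second. This is the asserted value of $|A(P)_i|$.

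The argument is essentially bookkeeping once the order-ideal closure of $C_P$ is in hand; the only point needing care — and the closest thing to an obstacle — is the correct treatment of the single exceptional monomial $x^i$ (respectively $y^{i+1}$ in part (ii)), the unique degree-$i$ monomial not divisible by $y$, which accounts exactly for the $+1$ in the first alternative of each formula. Keeping track of whether this exceptional monomial belongs to $C_P$ or to $E_P$ is what distinguishes the two cases.
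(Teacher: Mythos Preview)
Your proof of (i) is correct and essentially the same as the paper's: both count via the multiplication-by-$y$ map from $(C_P)_{i-1}$ into degree $i$ and track whether the exceptional monomial $x^i$ lies in $C_P$. For (ii) you invoke the conjugation $P\mapsto P^\vee$ to reduce to (i), whereas the paper simply repeats the argument with the analogous map $M\mapsto xM$; this is a small economy rather than a different method.
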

\smallskip
\noindent
\begin{proof}[Proof of claim] 
(Note: these formulas have been established in \cite{IY2} to define \textsl{difference-$a$ hook partitions}.) \\
One can consider the following maps 
\begin{center}
\begin{tabular}{lr}
$\displaystyle{
\begin{array}{llcc}
\varphi_i : & (C_P)_{i-1} & \longrightarrow & (C_P)_i\cup A(P)_i 
\\
 & M & \mapsto & yM
\end{array}
} $; 
&
$\displaystyle{
\begin{array}{llcc}
\psi_i : & (C_P)_{i} & \longrightarrow & (C_P)_{i+1}\cup B(P)_{i+1} 
\\
 & M & \mapsto & xM
\end{array}
} .$ 
\end{tabular}
\end{center}
The maps $\varphi_i$ and $\psi_i$ are injective. Also note that if $x^i\in (C_P)_i$, than $x^i$ is the only element of 
$(C_P)_i\cup A(P)_i $ that is not in the image of $\varphi_i$, so we have 
$\left| (C_P)_{i-1}\right|=\left|(C_P)_i \right|+\left|A(P)_i\right|-1$. 
If $x^i\notin (C_P)_i$ than $\varphi_i$ is a bijection, thus $\left| (C_P)_{i-1}\right|=\left|(C_P)_i \right|+\left|A(P)_i\right|$. 
The formula for $\left|A(P)_i\right|$ follows  from the fact that for any integer $l$, $\left|(C_P)_l \right|=t_l$.
Using the same arguments one can verify the formula for $\left|B(P)_{i+1}\right|$.
\end{proof}
\begin{remark}
\begin{enumerate}[(i).]
\item Each monomial $x^ay^b$ in $A(P)_i$ is just below a degree-$(i-1)$ foot monomial ($x^ay^{b-1}$) of $P$, thus $\left|A(P)_i\right|$ counts the number of degree-$(i-1)$ foot monomials in the Ferrers diagram of $P$.
\item The elements of $B(P)_{i+1}$ are each just right to a degree-$i$ hand monomial, so $\left|B(P)_{i+1}\right|$ counts the number of degree-$i$ hand monomials in the Ferrers diagram of $P$. In Definition \ref{Pidef1} we will consider the first  (numbering from top to bottom-- lex order) $\delta_{i+1}$ degree-$i$ hand monomials of $P$. 
\item Recall that $P=(p_1, \dots , p_s)$, with $p_1\geq p_2 \geq \cdots \geq p_s>0$. \\ 
If $x^i\notin (C_P)_i$, then $p_1\leq i$.
\end{enumerate}
\end{remark}\vskip 0.2cm
With the following key definition we are able to construct the component $P_i$ from $P$, in a fashion that is convenient for our later algebraic proofs, that involve an induction from $I:x$ to $I$.
For illustration see Example \ref{comb_PideP_rmk} and Figure \ref{tobedetermined}, and as well Example \ref{1stgensEx} and Figure~\ref{determined}.\par
\begin{definition}\label{Pidef1}[Definition of $P_i$]
For any positive integer $n\in{\mathbb N}$, denote by $\Mon(R_n)$ the set of degree $n$ monomials of $R={\sf k}[x,y]$
and recall the lex order on $\Mon(R_n)$: $x^n<x^{n-1}y<\cdots <xy^{n-1}<y^n$. 
 Let $P\in \mathcal P(T)$. For every $i\in \left[d,\sf{j} \right]$ we define the set $V_{i1}$ as the following 
$$V_{i1}=\left\{\begin{array}{ll} A(P)_i,&\mbox{ if }x^i\in (C_P)_i \\ A(P)_i\cup\left\{x^ay^b\right\},&\mbox{ if }x^i\notin (C_P)_i \end{array}\right.,$$
where $x^ay^b$ is the last (lex order) degree-$i$ \textsl{vertical-border monomial} above  $M_{i1}$ that is the first (lex order) monomial in $A(P)_i$. 
We note that $\dim V_{i1}=\delta_i+1$. \par
We now define the set $V_{i2}$ to be  the first (lex order) $\delta_{i+1}$ hand monomials in  $(C_P)_i$. We denote by $V_i$ the vector space spanned by $V_{i1}\cup V_{i2}$. 

\medskip
\noindent
By definition, $V_i$ has dimension $d_i+1$ where $d_i=\delta_i+\delta_{i+1}$. 
The set $V_{i1}\cup V_{i2}$ is lex ordered and we can consider the one-to-one correspondence 
\begin{equation}\label{monomeq}
s_i : V_{i1}\cup V_{i2} \longrightarrow\Mon(R_{d_i})
\end{equation}
 that respects the lex ordering (the $k$-th element of $V_{i1}\cup V_{i2}$ is associated to the $k$-th element of $\Mon(R_{d_i})$). \par
The vector space $s_i(V_{i1})$ has dimension $\delta_i+1$, so $\langle s_i(V_{i_1}) \rangle+\mathfrak m^{d_i+1}$ is the monomial ideal $(E_{P_i})$ for a \emph{unique partition $P_i$ } of diagonal lengths the single-block Hilbert function 
$T_i=(1, \dots, d_i-1,d_i,t_{d_i},0)$ where $d_i=\delta_i+\delta_{i+1}$ and $t_{d_i}=\delta_{i+1}$.   
\end{definition}
\begin{example}\label{972ex}
Let $P=(9,7^2,4^2,2,1^2)$ be the partition considered in Example \ref{example-border-monomials} with diagonal lengths $T=(1,2,3,4,5,6,7,5,2,0)$. (See Figure \ref{972ex_fig_a}). The difference-one hook code of $P$ is ${\mathfrak Q}(P)=\big((3,2,0)_7,(4,3)_8\big)$. \\We have $( \delta_7,\delta_8,\delta_9)=(2,3,2)$, $T_7=(1,2,3,4,5,3)$ and $T_8=(1,2,3,4,5,2)$.\\
\begin{figure}[!h]
\begin{center}
	\begin{tikzpicture}[scale=.8]
	\draw[dashed,very thin] 	(0,0)--(9,0)
						(0,-1)--(7,-1)
						(0,-2)--(7,-2)
						(0,-3)--(4,-3)
						(0,-4)--(4,-4)
						(0,-5)--(2,-5)
						(0,-6)--(1,-6)
						(0,-7)--(1,-7)
						(0,0)--(0,-8)
						(1,0)--(1,-6)
						(2,0)--(2,-5)
						(3,0)--(3,-5)
						(4,0)--(4,-3)
						(5,0)--(5,-3)
						(6,0)--(6,-3)
						(7,0)--(7,-1)
						(8,0)--(8,-1);
	\draw[thin, dashed,red,fill=yellow!10] (8,0) rectangle (9,-1)
							(6,-1) rectangle (7,-2)
							(6,-2) rectangle (7,-3)
							(3,-4) rectangle (4,-5)
							(0,-7) rectangle (1,-8);
	\draw[thin, blue,fill=black!5,rounded corners=3mm] (7,-1) rectangle (8,-2)
						     (5,-3) rectangle (6,-4)
						     (4,-3) rectangle (5,-4)
						     (3,-5) rectangle (4,-6)
						     (2,-5) rectangle (3,-6)
						     (1,-6) rectangle (2,-7)
						     (0,-8) rectangle (1,-9);
	\draw[red] node at (8.5,-0.5) {\footnotesize{$x^8$}}
			  node at (6.5,-1.5) {\footnotesize{$x^6y$}}
			  node at (6.5,-2.5) {\footnotesize{$x^6y^2$}}
			  node at (3.5,-4.5) {\footnotesize{$x^3y^4$}}
			  node at (0.5,-7.5) {\footnotesize{$y^7$}};
	\draw[blue] node at (7.5,-1.5) {\footnotesize{$x^7y$}}
			    node at (5.5,-3.5) {\footnotesize{$x^5y^3$}}
			    node at (4.5,-3.5) {\footnotesize{$x^4y^3$}}
			    node at (3.5,-5.5) {\footnotesize{$x^3y^5$}}
			    node at (2.5,-5.5) {\footnotesize{$x^2y^5$}}
			    node at (1.5,-6.5) {\footnotesize{$xy^6$}}
			    node at (0.5,-8.5) {\footnotesize{$y^8$}};
	\draw[very thick] (9,0)--(9,-1)--(7,-1)--(7,-3)--(4,-3)--(4,-5)--(2,-5)--(2,-6)--(1,-6)--(1,-8)--(0,-8);
         \end{tikzpicture}
         \caption{Ferrers diagram of $P=(9,7^2,4^2,2,1^2)$: \textsl{border monomials} are marked in blue and 
         \textsl{hand monomials} are marked in red (Example \ref{972ex}).}\label{972ex_fig_a}
\end{center}
\end{figure}
\begin{enumerate}[(a)]
\item In degree $7$ we have \textcolor{blue}{$V_{7,1}=A(P)_7=\left\{x^4y^3,x^2y^5,xy^6\right\}$}, 
\textcolor{red}{$V_{7,2}=\left\{x^6y,x^3y^4,y^7\right\}$}, so 
$V_7$ has basis $(x^6y,x^4y^3,x^3y^4,x^2y^5,xy^6,y^7)$ in the lex order. 
We consider the bijection $s_7: V_{7,1}\cup V_{7,2} \longrightarrow \Mon(R_{5})$ given by: 
$$
\begin{array}{ccc}
\textcolor{red}{s_7(x^6y)=x^5}, &\textcolor{blue}{s_7(x^4y^3)=x^4y} ,  &  \textcolor{red}{s_7(x^3y^4)=x^3y^2} \\
\textcolor{blue}{s_7(x^2y^5)=x^2y^3}, & \textcolor{blue}{s_7(xy^6)=xy^4}, & \textcolor{red}{s_7(y^7)=y^5}.
\end{array}
$$
We then get a one block partition $P_7$ (see Figure \ref{972ex_fig_b})
\begin{figure}[!h]
\begin{center}
	\begin{tikzpicture}[scale=.8]
	\draw[dashed,very thin] 	(0,0)--(6,0)
						(0,-1)--(4,-1)
						(0,-2)--(4,-2)
						(0,-3)--(2,-3)
						(0,-4)--(1,-4)
						(0,-5)--(1,-5)
						(0,0)--(0,-6)
						(1,0)--(1,-4)
						(2,0)--(2,-3)
						(3,0)--(3,-3)
						(4,0)--(4,-1)
						(5,0)--(5,-1);
	\draw[thin, dashed,red,fill=yellow!10] (5,0) rectangle (6,-1)
								     (3,-2) rectangle (4,-3)
								     (0,-5) rectangle (1,-6);
	\draw[thin, blue,fill=black!5,rounded corners=3mm] (4,-1) rectangle (5,-2)
											   (2,-3) rectangle (3,-4)
											   (1,-4) rectangle (2,-5);
	\draw[red] node at (5.5,-0.5) {\footnotesize{$x^5$}}
			  node at (3.5,-2.5) {\footnotesize{$x^3y^2$}}
			  node at (0.5,-5.5) {\footnotesize{$xy^4$}};
	\draw[blue] node at (4.5,-1.5) {\footnotesize{$x^4y$}}
			    node at (2.5,-3.5) {\footnotesize{$x^2y^3$}}
			    node at (1.5,-4.5) {\footnotesize{$xy^4$}};
	\draw[very thick] (6,0)--(6,-1)--(4,-1)--(4,-3)--(2,-3)--(2,-4)--(1,-4)--(1,-6)--(0,-6);
         \end{tikzpicture}
         \caption{Ferrers diagram of $P_7=(6,4^2,2,1^2)$ : \textsl{horizontal-border monomials} are marked in blue and 
         \textsl{hand monomials} are marked in red (Example \ref{972ex}a).}\label{972ex_fig_b}
\end{center}
\end{figure}

\item In degree $8$, \textcolor{blue}{$V_{8,1}=A(P)_8=\left\{x^7y,x^5y^3,x^3y^5,y^8\right\}$}, 
\textcolor{red}{$V_{8,2}=\left\{x^8,x^6y^2\right\}$}. 
$V_8$ has basis $(x^8,x^7y,x^6y^2,x^5y^3,x^3y^5,y^8)$ in the lex order. \\
The bijection $s_8: V_{8,1}\cup V_{8,2} \longrightarrow\Mon(R_{5})$ given by: 
$$
\begin{array}{ccc}
\textcolor{red}{s_8(x^8)=x^5}, & \textcolor{blue}{s_8(x^7y)=x^4y},  &  \textcolor{red}{s_8(x^6y^2)=x^3y^2}\\
\textcolor{blue}{s_8(x^5y^3)=x^2y^3}, & \textcolor{blue}{s_8(x^3y^5)=xy^4}, & \textcolor{blue}{s_8(y^8)=y^5}.
\end{array}
$$
This gives us a one block partition $P_8$ (see Figure \ref{972ex_fig_c})
\begin{figure}[!h]
\begin{center}
	\begin{tikzpicture}[scale=.8]
	\draw[dashed,very thin] 	(0,0)--(6,0)
						(0,-1)--(4,-1)
						(0,-2)--(4,-2)
						(0,-3)--(2,-3)
						(0,-4)--(1,-4)
						(0,0)--(0,-5)
						(1,0)--(1,-4)
						(2,0)--(2,-3)
						(3,0)--(3,-3)
						(4,0)--(4,-1)
						(5,0)--(5,-1);
	\draw[thin, dashed,red,fill=yellow!10] (5,0) rectangle (6,-1)
								     (3,-2) rectangle (4,-3);
	\draw[thin, blue,fill=black!5,rounded corners=3mm] (4,-1) rectangle (5,-2)
											   (2,-3) rectangle (3,-4)
											   (1,-4) rectangle (2,-5)
											   (0,-5) rectangle (1,-6);
	\draw[red] node at (5.5,-0.5) {\footnotesize{$x^5$}}
			  node at (3.5,-2.5) {\footnotesize{$x^3y^2$}};
	\draw[blue] node at (4.5,-1.5) {\footnotesize{$x^4y$}}
			    node at (2.5,-3.5) {\footnotesize{$x^2y^3$}}
			    node at (1.5,-4.5) {\footnotesize{$xy^4$}}
			    node at (0.5,-5.5) {\footnotesize{$y^5$}};									   
	\draw[very thick] (6,0)--(6,-1)--(4,-1)--(4,-3)--(2,-3)--(2,-4)--(1,-4)--(1,-5)--(0,-5);
         \end{tikzpicture}
         \caption{Ferrers diagram $P_8=(6,4^2,2,1)$ : \textsl{horizontal-border monomials} are marked in blue and 
         \textsl{hand monomials} are marked in red (Example \ref{972ex}b).}\label{972ex_fig_c}
\end{center}
\end{figure}
\end{enumerate}
\end{example}
\begin{proposition}\label{rm_rows_cols_P_get_P_i}
Using the notation of Definition \ref{Pidef1}, let 
\begin{equation}\label{Veq}
\displaystyle{V_{i1}\cup V_{i2}=\left\{x^{\alpha_0}y^{\beta_0}, \ldots,  x^{\alpha_{d_i}}y^{\beta_{d_i}}\right\}},
\end{equation}
where $\alpha_0<\alpha_1<\cdots <\alpha_{d_i}$  (so $\beta_0>\beta_1>\cdots >\beta_{d_i}$). 
Let $P^{\prime}$ be the partition obtained from $P$ by removing any column of $P$ whose index does not belong to the set $\left\{\alpha_0,\alpha_1,\ldots ,\alpha_{d_i} \right\}$ and any row of $P$ whose index does not  belong to the set $\left\{\beta_0,\beta_1,\ldots ,\beta_{d_i} \right\}$.  Then $P^{\prime}=P_i$.
\end{proposition}
\begin{proof}
In constructing the $i$-th component of ${\mathfrak Q}(P)$ (in the difference-one hook code of $P$), we only need the elements of $V_{i2}$ (degree $i$ hand monomials) and the elements of $V_{i1}$ (related to degree $i$ \textsl{horizontal-border monomials}).The purpose of the bijection $s_i : V_{i1}\cup V_{i2} \longrightarrow \Mon (R_{d_i})$ is to let us focus on these monomials. So by definition of the bijection $s_i$, the partition $P_i$ is obtained from $P$ by ignoring any column of $P$ whose index does not belong to the set $\left\{\alpha_0,\alpha_1,\ldots ,\alpha_{d_i} \right\}$ and any row of $P$ whose index does not  belong to the set $\left\{\beta_0,\beta_1,\ldots ,\beta_{d_i} \right\}$. Deleting these unnecessary rows and columns will result in showing only the relevant degree $i$ hands and degree $i-1$ feet of $P$.  
\end{proof}

In the following example we visualize the set $V_i=V_{i1}\cup V_{i2}$, from Definition \ref{Pidef1} by looking at the Ferrers diagram of a partition $P$. 

\begin{example}\label{comb_PideP_rmk}
Consider the two-block partition $P=(6^2,3,2^2)$ with diagonal lengths $T=\left(1,2,3,4,5,3,1\right)$, see Figure  \ref{tobedetermined}. 
Consider the diagonal corresponding to degree-$5$ monomials of ${\sf k}[x,y]$, see the grey bubbles in Figure \ref{tobedetermined}. Then the set of degree-$5$ horizontal-border monomials of $P$, $A(P)_5$ can correspond to the bubbles outside of the Ferrers diagram that are right below the horizontal edges of $P$. So $A(P)_5=\{x^3y^2,x^2y^3,y^5\}$, see the blue monomials in the left side of Figure \ref{tobedetermined}. Since the largest part of $P$ is greater than $5$ (i.e., $x^5\in (C_P)_5$), then $V_{51}$ is the same as $A(P)_5$. To obtain $A(P)_6$, notice that the largest part of $P$ is at most $6$ (i.e., $x^6\not \in (C_P)_6$) then $V_{61}$ also includes the first degree-$6$ vertical-border monomial of $P$ that is above all monomials in $A(P)_6$. This monomial corresponds to a bubble outside of the Ferrers diagram that is immediately to the right of a horizontal edge of $P$, see the red monomial on the the right of Figure \ref{tobedetermined}. So $A(P)_6=\{x^4y^2,xy^5\}$ and $V_{61}=\{x^6,x^4y^2,xy^5\}$. Finally, monomials in $V_{i2}$ consists of the first $\delta_{i+1}$ hand monomials of $P$. These correspond to bubbles inside the Ferrers diagram that are at the end of a row of $P$. So $V_{52}=\{x^5,x^4y,xy^4\}$ and $V_{62}=\{x^5y,x^3y^3,x^2y^4,y^6\}$, see the black monomials in Figure \ref{tobedetermined}.

To visualize Proposition \ref{rm_rows_cols_P_get_P_i}, we fill out the degree-$i$ grey bubbles that correspond to $V_i$ by their monomials. We then remove all rows and columns of $P$ that do not include a filled degree-$i$ bubble, see Figure \ref{tobedetermined}.
\begin{figure}
	\begin{tikzpicture}[scale=.8]
	\draw[very thin] 	(1.5,0)--(7.5,0)
						(1.5,-1)--(7.5,-1)
						(1.5,-2)--(4.5,-2)
						(1.5,-3)--(3.5,-3)
						(1.5,-4)--(3.5,-4)
						(2.5,0)--(2.5,-5)
						(3.5,0)--(3.5,-5)
						(4.5,0)--(4.5,-3)
						(5.5,0)--(5.5,-2)
						(6.5,0)--(6.5,-2)
						(7.5,0)--(7.5,-2);	

	\draw[thin,fill=black!5,rounded corners=3mm] 
	(1.5,-5) rectangle (2.5,-6)
	(2.5,-4) rectangle (3.5,-5)
	(3.5,-3) rectangle (4.5,-4)
	(4.5,-2) rectangle (5.5,-3)
	(5.5,-1) rectangle (6.5,-2)
	(6.5,0) rectangle (7.5,-1);

\draw[thick, dashed, red]	
	                    (1,-1.5)--(8,-1.5)
	                    (6,0.5)--(6, -2.5);
	
	\draw[black] 
	            node at (7,-0.5) {\footnotesize{$x^5$}}
	            node at (3,-4.5) {\footnotesize{$xy^4$}};	
	\draw[blue] 
			    node at (5,-2.5) {\footnotesize{$x^3y^2$}}
			    node at (4,-3.5) {\footnotesize{$x^2y^3$}}
			    node at (2,-5.5) {\footnotesize{$y^5$}};	  
	\draw[very thick] (7.5,0)--(7.5,-2)--(4.5,-2)--(4.5,-3)--(3.5,-3)--(3.5,-5)--(1.5,-5)--(1.5,0)--(7.5,0);
	
	\draw[very thick, red, ->]
		(5,-7)--(5,-10);
	\draw[red] 
	    node at (4.5,-7.5) {$P$}
	    node at (4.5,-8.25) {to}
	    node at (4.5,-9.25) {$P_5$};

	\draw[very thin] 	(2,-11)--(7,-11)
						(2,-12)--(7,-12)
						(2,-13)--(5,-13)
						(2,-14)--(4,-14)
						(2,-15)--(4,-15)
						(3,-11)--(3,-15)
						(4,-11)--(4,-15)
						(5,-11)--(5,-13)
						(6,-11)--(6,-12);
							
	\draw[very thick] 
	(2,-11)--(7,-11)--(7,-12)--(5,-12)--(5,-13)--(4,-13)--(4, -15)--(2,-15)--(2,-11);


	\draw[very thin] 	(11.5,0)--(17.5,0)
						(11.5,-1)--(17.5,-1)
						(11.5,-2)--(14.5,-2)
						(11.5,-3)--(13.5,-3)
						(11.5,-4)--(13.5,-4)
						(12.5,0)--(12.5,-5)
						(13.5,0)--(13.5,-5)
						(14.5,0)--(14.5,-3)
						(15.5,0)--(15.5,-2)
						(16.5,0)--(16.5,-2)
						(17.5,0)--(17.5,-2);	

	\draw[thin,fill=black!5,rounded corners=3mm] 
	(11.5,-6) rectangle (12.5,-7)
	(12.5,-5) rectangle (13.5,-6)
	(13.5,-4) rectangle (14.5,-5)
	(14.5,-3) rectangle (15.5,-4)
	(15.5,-2) rectangle (16.5,-3)
	(16.5,-1) rectangle (17.5,-2)
	(17.5,0) rectangle (18.5,-1);

\draw[thick, dashed, red]	
	                    (12,0.5)--(12,-7.5)
	                    (14,0.5)--(14,-5.5)
	                    (15,0.5)--(15,-4.5)
	                    (11,-3.5)--(16, -3.5)
	                    (11,-4.5)--(15, -4.5);
	
	\draw[red] 
	            node at (18,-0.5) {\footnotesize{$x^6$}};
	\draw[black]           
	            node at (17,-1.5) {\footnotesize{$x^5y$}};	
	\draw[blue] 
			    node at (16,-2.5) {\footnotesize{$x^4y^2$}}
			    node at (13,-5.5) {\footnotesize{$xy^5$}};	  
	\draw[very thick] (17.5,0)--(17.5,-2)--(14.5,-2)--(14.5,-3)--(13.5,-3)--(13.5,-5)--(11.5,-5)--(11.5,0)--(17.5,0);
	
	\draw[very thick, red, ->]
		(15,-7)--(15,-10);
	\draw[red] 
	    node at (15.5,-7.5) {$P$}
	    node at (15.5,-8.25) {to}
	    node at (15.5,-9.25) {$P_6$};
	    
	
	\draw[very thin] 	(13.5,-11)--(16.5,-11)
						(13.5,-12)--(16.5,-12)
						(13.5,-13)--(14.5,-13)
						(14.5,-11)--(14.5,-14)
						(15.5,-11)--(15.5,-13)
						(16.5,-11)--(16.5,-13);
							
	\draw[very thick] 
	(13.5,-11)--(16.5,-11)--(16.5,-13)--(14.5,-13)--(14.5,-14)--(13.5,-14)--(13.5,-11);
	
         \end{tikzpicture}
         \caption{Illustration of Example \ref{comb_PideP_rmk}. On the left, the monomials in blue represent $V_{5,1}=A(P)_5$. On the right, the monomials in blue represent $A(P)_6$ and the monomial in red represents the additional vertical-border monomial in $V_{6,1}$. }\label{tobedetermined}
\end{figure}
\end{example}

\begin{remark}\label{empty_hook_code_block_partition}
For $i\in \left[d,\sf{j} \right]$, it may happen that $t_i=t_{i+1}$, so $\delta_{i+1}=0$. In that case we have: 
\begin{enumerate}[(i).]
\item The rectangular box ${\mathfrak B}_i(T)=(\delta_{i+1})\times (1+\delta_i)$ of Equation \ref{BTeq} is empty and so the degree-$i$ partition 
$ \mathfrak h_i(P)$ in Equation \ref{hookcodeeq} is empty. 
\item $V_{i2}$ is empty and so $s_i(V_{i1})=\Mon(R_{d_i})$. 
\item The partition $P_i$  associated to the monomial ideal $\langle s_i(V_{i_1}) \rangle+\mathfrak m^{d_i+1}$ is just the basic triangle 
$\Delta_{d_i}=\Delta_{\delta_i}=(\delta_i, \delta_i-1, \ldots , 1)$. Of course, if $\delta_i=0$, then $\Delta_{d_i}=\emptyset$ and 
$\langle s_i(V_{i_1}) \rangle+\mathfrak m^{d_i+1}=R$.
\end{enumerate}
\end{remark}
\medskip

The following Proposition follows directly from Definition \ref{hookcodedef} and 
Definition~\ref{Pidef1}.
\begin{proposition}\label{hookP(i)lem}
The difference-one hook code of $P_i$ is exactly that of the $i$-th component of ${\mathfrak Q}(P)$ (in the difference-one hook code of $P$).
\end{proposition}\noindent
Note, however, the shift in degree:  the difference-one hook code of $P_i$ occurs in the degree $d_i=t_{i-1}-t_{i+1}=\delta_i(T)+\delta_{i+1}(T)$.

\begin{example}\label{1stgensEx}
Consider the three-block partition $P=(15, 12^4, 11, 7, 6^2, 5, 3^4)$ with diagonal lengths $T=\left(1,2,\dots ,13,10_{13},6_{14},3_{15},0\right)$ and hook code $$\mathfrak{Q}(P)=\left((3,1^2, 0)_{13},(5,4,1)_{14}, (2^2,1)_{15}\right).$$

In Figure \ref{determined}, we illustrate the process of decomposing $P$ into its single-block components, $P_{13}=(7^2, 5, 4^2, 3, 1^2)$, $P_{14}=(8, 6^2,4, 3, 2^2)$, and $P_{15}=(6, 5^2, 4, 2^2)$ of diagonal lengths, respectively, $T_{13}=(1, \dots, 7, 4,0)$, 
$T_{14}=(1, \dots, 7, 3,0)$, and $T_{15}=(1, \dots, 6, 3,0)$.

In each part of Figure \ref{determined}, the bubbles correspond to degree-$i$ monomials. The hand monomials of $V_{i2}$ are black and the horizontal-border monomials, which all belong to $V_{i1}$ are blue. For $i=15$, since $x^{15}\not \in (C_P)_{15}$, in addition of the horizontal-border monomials, the set $V_{15,2}$ also includes a vertical-border monomial that is illustrated in red. The rest of the bubbles which are in light grey determine the rows and columns of $P$ that need to be removed, according to Proposition \ref{rm_rows_cols_P_get_P_i}, in order to obtain the corresponding single-block component.

\begin{figure}

\begin{tikzpicture}[scale=.45]
\draw[black] 
	  node at (6,2.5) {Obtaining $P_{13}=(7^2,5,4^2,3,1^2)$}
	  node at (6,1.25){from $P=(15,12^4,11,7,6^2,5,3^4)$};
	    
\draw[very thin]  
	                (0,0)--(15,0)
	                (0,-1)--(15,-1)
	                (0,-2)--(12,-2)
	                (0,-3)--(12,-3)
	                (0,-4)--(12,-4)
	                (0,-5)--(12,-5)
	                (0,-6)--(11,-6)
	                (0,-7)--(7,-7)
	                (0,-8)--(6,-8)
	                (0,-9)--(6,-9)
	                (0,-10)--(5,-10)
	                (0,-11)--(3,-11)
	                (0,-12)--(3,-12)
	                (0,-13)--(3,-13)

	                (1,0)--(1,-14)
	                (2,0)--(2,-14)
	                (3,0)--(3,-10)
	                (4,0)--(4,-10)
	                (5,0)--(5,-9)
	                (6,0)--(6,-7)
	                (7,0)--(7,-6)
	                (8,0)--(8,-6)
	                (9,0)--(9,-6)
	                (10,0)--(10,-6)
	                (11,0)--(11,-6)
	                (12,0)--(12,-1)
	                (13,0)--(13,-1)
	                (14,0)--(14,-1)
	                     ;	

\draw[very thick] 
	(0,0)--(15,0)--(15,-1)--(12,-1)--(12,-2)--(12,-5)--(11,-5)--(11,-6)--(7,-6)--(7,-7)--(6,-7)--(6,-9)--(5,-9)--(5,-10)--(3,-10)--(3,-14)--(0,-14)--(0,0);
	
\draw[very thin,fill=black!5,rounded corners=2mm] 
	(13,0) rectangle (14,-1)
	(10,-3) rectangle (11,-4)
	(9,-4) rectangle (10,-5)
	(8,-5) rectangle (9,-6)
	(1,-12) rectangle (2,-13)
	(0,-13) rectangle (1,-14);

\draw[very thin,fill=blue!25,rounded corners=2mm]
    (12,-1) rectangle (13,-2)
    (7,-6) rectangle (8,-7)
	(6,-7) rectangle (7,-8)
	(3,-10) rectangle (4,-11);

\draw[very thin,fill=black!50,rounded corners=2mm]
    (11,-2) rectangle (12,-3)
    (5,-8) rectangle (6,-9)
	(4,-9) rectangle (5,-10)
	(2,-11) rectangle (3,-12);
\draw[white]
    node at (0,-14.25) {space};

\draw[thick, dashed, red]	
	                    (-0.5,-0.5)--(15.5,-0.5)
	                    (-0.5,-3.5)--(12.5,-3.5)
	                    (-0.5,-4.5)--(12.5,-4.5)
	                    (-0.5,-5.5)--(11.5,-5.5)
	                    (-0.5,-12.5)--(3.5,-12.5)
	                    (-0.5,-13.5)--(3.5,-13.5)
	                    
	                    (0.5,0.5)--(0.5,-14.5)
	                    (1.5,0.5)--(1.5,-14.5)
	                    (8.5,0.5)--(8.5,-6.5)
	                    (9.5,0.5)--(9.5,-6.5)
	                    (10.5,0.5)--(10.5,-6.5);
	                    
\end{tikzpicture}	
\quad \quad \quad \quad
\begin{tikzpicture}[scale=.45]	                    
\draw[black] 
	    node at (6,2.5) {Obtaining $P_{14}=(8,6^2,4,3,2^2)$}
	    node at (6,1.25){from $P=(15,12^4,11,7,6^2,5,3^4)$};
	   	                    
	\draw[very thin]  
	                (0,0)--(15,0)
	                (0,-1)--(15,-1)
	                (0,-2)--(12,-2)
	                (0,-3)--(12,-3)
	                (0,-4)--(12,-4)
	                (0,-5)--(12,-5)
	                (0,-6)--(11,-6)
	                (0,-7)--(7,-7)
	                (0,-8)--(6,-8)
	                (0,-9)--(6,-9)
	                (0,-10)--(5,-10)
	                (0,-11)--(3,-11)
	                (0,-12)--(3,-12)
	                (0,-13)--(3,-13)

	                (1,0)--(1,-14)
	                (2,0)--(2,-14)
	                (3,0)--(3,-10)
	                (4,0)--(4,-10)
	                (5,0)--(5,-9)
	                (6,0)--(6,-7)
	                (7,0)--(7,-6)
	                (8,0)--(8,-6)
	                (9,0)--(9,-6)
	                (10,0)--(10,-6)
	                (11,0)--(11,-6)
	                (12,0)--(12,-1)
	                (13,0)--(13,-1)
	                (14,0)--(14,-1)
	                     ;	

    \draw[very thick] 
	(0,0)--(15,0)--(15,-1)--(12,-1)--(12,-2)--(12,-5)--(11,-5)--(11,-6)--(7,-6)--(7,-7)--(6,-7)--(6,-9)--(5,-9)--(5,-10)--(3,-10)--(3,-14)--(0,-14)--(0,0);
	
	\draw[very thin,fill=black!5,rounded corners=2mm] 
	 (12,-2) rectangle (13,-3)
	(10,-4) rectangle (11,-5)
	(9,-5) rectangle (10,-6)
	(7,-7) rectangle (8,-8)
	(6,-8) rectangle (7,-9)
	(5,-9) rectangle (6,-10)
    (3,-11) rectangle (4,-12)
	(1,-13) rectangle (2,-14);

    \draw[very thin,fill=blue!25,rounded corners=2mm]
    (13,-1) rectangle (14,-2)
    (8,-6) rectangle (9,-7)
    (5,-9) rectangle (6,-10)
	(4,-10) rectangle (5,-11)
	(0,-14) rectangle (1,-15);

    \draw[very thin,fill=black!50,rounded corners=2mm]
    (14,0) rectangle (15,-1)
    (11,-3) rectangle (12,-4)
    (2,-12) rectangle (3,-13);

\draw[thick, dashed, red]	
	                    
	                    (-0.5,-2.5)--(13.5,-2.5)
	                    (-0.5,-4.5)--(12.5,-4.5)
	                    (-0.5,-5.5)--(11.5,-5.5)
	                    (-0.5,-7.5)--(8.5,-7.5)
	                    (-0.5,-8.5)--(7.5,-8.5)
	                    (-0.5,-11.5)--(4.5,-11.5)
	                    (-0.5,-13.5)--(3.5,-13.5)

	                    (1.5,0.5)--(1.5,-14.5)
	                    (3.5,0.5)--(3.5,-12.5)
	                    (6.5,0.5)--(6.5,-9.5)
	                    (7.5,0.5)--(7.5,-8.5)
	                    (9.5,0.5)--(9.5,-6.5)
	                    (10.5,0.5)--(10.5,-6.5)
	                    (12.5,0.5)--(12.5,-3.5);
	                    
 \end{tikzpicture}
 
 \bigskip
 \bigskip
 \bigskip
 
 \begin{center}
 \begin{tikzpicture}[scale=.45]	
\draw[black] 
        node at (6,2.5) {Obtaining $P_{15}=(6,5^2,4,2^2)$}
	    node at (6,1.25){from $P=(15,12^4,11,7,6^2,5,3^4)$};
	\draw[very thin]  
	                (0,0)--(15,0)
	                (0,-1)--(15,-1)
	                (0,-2)--(12,-2)
	                (0,-3)--(12,-3)
	                (0,-4)--(12,-4)
	                (0,-5)--(12,-5)
	                (0,-6)--(11,-6)
	                (0,-7)--(7,-7)
	                (0,-8)--(6,-8)
	                (0,-9)--(6,-9)
	                (0,-10)--(5,-10)
	                (0,-11)--(3,-11)
	                (0,-12)--(3,-12)
	                (0,-13)--(3,-13)

	                (1,0)--(1,-14)
	                (2,0)--(2,-14)
	                (3,0)--(3,-10)
	                (4,0)--(4,-10)
	                (5,0)--(5,-9)
	                (6,0)--(6,-7)
	                (7,0)--(7,-6)
	                (8,0)--(8,-6)
	                (9,0)--(9,-6)
	                (10,0)--(10,-6)
	                (11,0)--(11,-6)
	                (12,0)--(12,-1)
	                (13,0)--(13,-1)
	                (14,0)--(14,-1)
	                     ;	

    \draw[very thick] 
	(0,0)--(15,0)--(15,-1)--(12,-1)--(12,-2)--(12,-5)--(11,-5)--(11,-6)--(7,-6)--(7,-7)--(6,-7)--(6,-9)--(5,-9)--(5,-10)--(3,-10)--(3,-14)--(0,-14)--(0,0);
	
	\draw[very thin,fill=black!5,rounded corners=2mm] 
	 (13,-2) rectangle (14,-3)
	(6,-9) rectangle (7,-10)
    (4,-11) rectangle (5,-12)
	(12,-3) rectangle (13,-4)
	(8,-7) rectangle (9,-8)
	(7,-8) rectangle (8,-9)
	(6,-9) rectangle (7,-10)
	(5,-10) rectangle (6,-11)
	(3,-12) rectangle (4,-13)
	(0,-15) rectangle (1,-16);

    \draw[very thin,fill=blue!25,rounded corners=2mm]
    (14,-1) rectangle (15,-2)
    (9,-6) rectangle (10,-7)
	(1,-14) rectangle (2,-15);

    \draw[very thin,fill=black!50,rounded corners=2mm]
    (2,-13) rectangle (3,-14)
    (11,-4) rectangle (12,-5)
	(10,-5) rectangle (11,-6);
	
	\draw[very thin,fill=red!25,rounded corners=2mm]
	(15,0) rectangle (16,-1);

\draw[thick, dashed, red]	
	                    
	                    (-0.5,-2.5)--(14.5,-2.5)
	                    (-0.5,-3.5)--(13.5,-3.5)
	                    (-0.5,-7.5)--(9.5,-7.5)
	                    (-0.5,-8.5)--(8.5,-8.5)
	                    (-0.5,-9.5)--(7.5,-9.5)
	                    (-0.5,-10.5)--(6.5,-10.5)
	                    (-0.5,-11.5)--(5.5,-11.5)
	                    (-0.5,-12.5)--(4.5,-12.5)

	                    (0.5,0.5)--(0.5,-16.5)
	                    (3.5,0.5)--(3.5,-13.5)
	                    (4.5,0.5)--(4.5,-12.5)
	                    (5.5,0.5)--(5.5,-11.5)
	                    (6.5,0.5)--(6.5,-10.5)
	                    (7.5,0.5)--(7.5,-9.5)
	                    (8.5,0.5)--(8.5,-8.5)
	                    (12.5,0.5)--(12.5,-4.5)
	                    (13.5,0.5)--(13.5,-3.5);

 \end{tikzpicture}
 \end{center}
         \caption{Finding single-block components for partition $P=(15,12^4,11,7,6^2,5,3^4)$ of Example \ref{1stgensEx} using Proposition \ref{rm_rows_cols_P_get_P_i}.}\label{determined}

\end{figure}

\end{example}
\medskip

\subsubsection{The projection map $\pi$ from ${\mathbb V}(E_P)$ to a product of cells of  ``small'' Grassmannians.}
\begin{definition}[The map $\pi$ of $\mathbb V(E_P)$ to the product of small Grassmannians]\label{def_pi}\footnote{We use the term ``small'' Grassmannian to distinguish these from the (large) Grassmannians determined by the projection $\G_T\to \prod_{d\le i\le {\sf j}} \Grass(i+1-t_i, i+1)$ given by $I\to (I_d,\ldots, I_{\sf j})$.}

Suppose $P\in \mathcal P(T)$ and let $I\in \mathbb V(E_P)$. Denote by $W_i$ the vector space generated by $V_{i1}\cup (C_P)_i$ for $i\in [d,{\sf j}]$. Note that $\dim_kW_i=\delta_i+1+t_i$.  
It is straightforward to see that the vector space $I_{W_i}=I\cap W_i$ has dimension $(\delta_i+1)$. The leading monomial of any non-zero element of $I_{W_i}$ belongs to $V_{i1}$ and conversely, given an element $M$ of $V_{i1}$, there is an element of $I_{W_i}$ whose leading monomial is $M$.\\
Let $K_i$ be the vector space generated by $V_{i3}=(C_P)_i\setminus V_{i2}$. We write $W_i$ as a direct sum $W_i=V_i\oplus K_i$ and consider the projection on the first factor 
$\displaystyle{pr_1: W_i\longrightarrow V_i }$ and let $I_{V_i}=pr_1(I_{W_i})$. Then $I_{V_i}\in \Grass(\delta_i+1, V_i)$.
\\
Thus, we have constructed a morphism 
${\mathbb V}(E_P) \stackrel{\pi_i}{\longrightarrow} \Grass(\delta_i+1, V_i)$,
and, taking the product $\pi=(\pi_d,\ldots, \pi_{\sf j})$
we have a morphism 
\begin{equation}\label{isomeq}
{\mathbb V}(E_P) \stackrel{\pi}{\longrightarrow} \prod_{i=d}^{i={\sf j}}\Grass(\delta_i+1, V_i).
\end{equation}
\end{definition}
\begin{remark}\label{im_of_pi_i}
Note that by Definition \ref{def_pi} and the definition of the difference-one hook code, the image of $\pi_i$ is a Schubert cell $\mathbb V(E_{P_i})$ in
$\Grass(\delta_i+1, V_i)$,
whose dimension is $\left|{\mathcal Q}(P)_i\right|$, the length of the $i$-th block of the hook code of $P$. \\
When $\delta_{i+1}=0$ (that is, $t_i=t_{i+1}$) we have $\dim(V_{i})=\delta_i+1$ and $\Grass(\delta_i+1, V_i)$ is just one point: the 
$i$-th block of the hook code of $P$ in this case is empty, so has length zero (see Remark \ref{empty_hook_code_block_partition}).
\end{remark}
\begin{lemma}[Morphism $\pi$ to the component small Grassmannians]\label{morphismlem} Let $P\in \mathcal P(T)$. The morphism $\pi$ of \eqref{isomeq} determines a morphism $$\pi:{\mathbb V}(E_P) \stackrel{\pi}{\longrightarrow} \prod_{i=d}^{i={\sf j}}\mathbb V(E_{P_i}).$$
\end{lemma}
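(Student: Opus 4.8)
The plan is to check that $\pi$ corestricts to the product of cells by verifying, for each fixed $i\in[d,{\sf j}]$ and each $I\in\mathbb{V}(E_P)$, that the image $\pi_i(I)=I_{V_i}$ lands in the Schubert cell $\mathbb{V}(E_{P_i})\subset\Grass(\delta_i+1,V_i)$. Under the order-preserving identification $s_i\colon V_i\xrightarrow{\sim}R_{d_i}$ of \eqref{monomeq}, the single-block cell $\mathbb{V}(E_{P_i})$ consists precisely of the $(\delta_i+1)$-dimensional subspaces of $R_{d_i}$ whose set of $y$-initial monomials is $(E_{P_i})_{d_i}=\langle s_i(V_{i1})\rangle$ (Definition \ref{Pidef1}); all higher-degree conditions defining $E_{P_i}$ are automatic because $\mathfrak m^{d_i+1}$ is contained in both ideals. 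So it suffices to show that the leading monomials of $I_{V_i}$, read through $s_i$, are exactly $s_i(V_{i1})$.

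First I would invoke the facts recorded in Definition \ref{def_pi}: $I_{W_i}=I\cap W_i$ has dimension $\delta_i+1$, and the leading monomial of every nonzero element of $I_{W_i}$ lies in $V_{i1}$, with each element of $V_{i1}$ occurring as a leading monomial. Fix a basis $\{f_M : M\in V_{i1}\}$ of $I_{W_i}$ whose leading monomials are the distinct monomials $M\in V_{i1}$. The key step is to analyze the projection $pr_1\colon W_i=V_i\oplus K_i\to V_i$ that annihilates $K_i=\langle V_{i3}\rangle$. The monomials deleted from $f_M$ by $pr_1$ are exactly those of $f_M$ lying in $V_{i3}$; since $M$ is the lex-largest monomial of $f_M$ and $M\in V_{i1}\subset V_i$ is retained, $M$ remains the lex-largest monomial of $pr_1(f_M)$. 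Thus the $pr_1(f_M)$ have pairwise distinct leading monomials, hence are linearly independent and form a basis of $I_{V_i}=pr_1(I_{W_i})$; in particular $\dim_{\sf k} I_{V_i}=\delta_i+1$ and its set of leading monomials is again $V_{i1}$.

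Finally I would transport this through $s_i$. Because $s_i$ respects the lex order on the monomial basis of $V_i$, the $y$-initial monomial of $s_i\bigl(pr_1(f_M)\bigr)$ is $s_i(M)$; hence the set of initial monomials of $s_i(I_{V_i})\subset R_{d_i}$ is exactly $s_i(V_{i1})=(E_{P_i})_{d_i}$. Therefore $\mathrm{in}\bigl(\langle s_i(I_{V_i})\rangle+\mathfrak m^{d_i+1}\bigr)=E_{P_i}$, that is, $I_{V_i}\in\mathbb{V}(E_{P_i})$. Running over all $i$ gives $\pi(I)\in\prod_{i=d}^{\sf j}\mathbb{V}(E_{P_i})$; since each $\mathbb{V}(E_{P_i})$ is a locally closed subvariety of $\Grass(\delta_i+1,V_i)$ containing the image of $\pi_i$, the morphism $\pi$ of \eqref{isomeq} corestricts to a morphism into $\prod_i\mathbb{V}(E_{P_i})$.

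The only delicate point is the compatibility of $pr_1$ with leading monomials in the middle paragraph; this rests entirely on the placement built into Definition \ref{def_pi}, namely that the leading monomials sit in $V_{i1}\subset V_i$ while the discarded monomials lie in $V_{i3}=K_i$, so that projection can never promote a discarded (lex-smaller) monomial to leading position. Everything else is bookkeeping through the order-preserving bijection $s_i$, and the final corestriction is the formal remark that a morphism whose set-theoretic image lands in a subvariety factors through that subvariety.
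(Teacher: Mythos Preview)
Your proof is correct and follows the same approach as the paper's, just with considerably more detail: both arguments transport $I_{V_i}$ through the order-preserving linear isomorphism $\tilde{s_i}\colon V_i\to R_{d_i}$ induced by $s_i$, and both rely on the facts recorded in Definition~\ref{def_pi} that the leading monomials of $I_{W_i}$ are exactly $V_{i1}$. The paper is terse here---it essentially defers to the preceding Remark and to Definition~\ref{Pidef1}---whereas you spell out explicitly why the projection $pr_1$ preserves leading monomials (the discarded monomials in $V_{i3}$ are lex-smaller than the leading monomial $M\in V_{i1}$, which survives), which is indeed the only point requiring verification.
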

\begin{proof} The bijection $s_i : V_i\rightarrow\Mon(R_{d_i})$ of Equation \eqref{monomeq} induces a linear isomorphism
 $\tilde{s_i}:V_i\to R_{d_i}$. We thus have an isomorphism $\tilde{s_i}:\Grass(\delta_i+1, V_i)\to \Grass(\delta_i+1, R_{d_i})$, taking 
 $I_{V_i}=pr_1(I_{W_i})$ to the subspace $\tilde{s_i} (I_{V_i})\subset R_{d_i}$. Then by Definition \ref{Pidef1}, Equation \ref{isomeq} and Remark \ref{im_of_pi_i} determine the morphism $\pi$ of the Lemma.
\end{proof}

\begin{theorem}\label{projectionthm}The morphism $\pi$ of 
Lemma \ref{morphismlem} is an isomorphism from the Jordan cell $\mathbb V(E_P)$ onto its image $\prod_{i=d}^{i={\sf j}}\mathbb V(E_{P_i})$.
\end{theorem}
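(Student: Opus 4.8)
The plan is to show that $\pi$ is bijective with a \emph{regular} (polynomial) inverse; since both source and target are affine spaces, this upgrades bijectivity to the desired isomorphism onto $\prod_{i=d}^{\sf j}\mathbb V(E_{P_i})$. First I would dispose of dimensions. By Theorem~\ref{GTdecompthm} the cell $\mathbb V(E_P)$ is an affine space whose dimension is the number of difference-one hooks of $P$, while by the remark following Definition~\ref{def_pi} each factor $\mathbb V(E_{P_i})$ is a Schubert cell of dimension $|\mathfrak h_i(P)|$, the length of the $i$-th hook-code block. Since (Definition~\ref{hookcodedef} and Theorem~\ref{PTtoQTthm}) the difference-one hooks of $P$ are partitioned by their hand-degree $i$ into the blocks $\mathfrak h_i(P)$, the total is $\sum_i |\mathfrak h_i(P)| = \dim\prod_i\mathbb V(E_{P_i})$, so there is no dimension gap and all the content lies in the inverse. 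I would stress at the outset that bijectivity alone is not enough: a bijective endomorphism of affine space need not be an isomorphism (Frobenius), so one genuinely must produce a polynomial inverse.

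Next I would fix standard-basis coordinates. For $I\in\mathbb V(E_P)$ and each degree $i$, the space $I_{W_i}=I\cap W_i$ of Definition~\ref{def_pi} has dimension $\delta_i+1$ and, for each leading monomial $M\in V_{i1}$, contains a unique element $\tilde g_M = M - (\text{tail in }(C_P)_i)$. Splitting the tail along $W_i=V_i\oplus K_i$, the projection $\pi_i$ retains precisely the coefficients of the $\delta_{i+1}$ hand monomials $V_{i2}$ and discards those of $V_{i3}=K_i$. Thus in coordinates $\pi$ reads off, for each difference-one hook of hand-degree $i$, the coefficient of its hand monomial in the corresponding $\tilde g_M$; by Lemma~\ref{hookP(i)lem} these retained scalars are in natural bijection with the coordinates of $\prod_i\mathbb V(E_{P_i})$, under which $\pi$ becomes the identity (up to the relabeling $\tilde{s_i}$ of Lemma~\ref{morphismlem}).

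The hard part — and what I expect to be the main obstacle — is to show that these retained scalars are a \emph{complete and free} coordinate system on $\mathbb V(E_P)$; equivalently, that discarding the $K_i$-components loses no information because the ideal axioms force them. I would argue by induction on degree: since $I$ is closed under multiplication by $x$ and $y$, every monomial of $(C_P)_i$ lying outside $V_{i2}$ is $x$ or $y$ times a lower-degree element of $C_P$, and reducing $x\cdot\tilde g_M$ and $y\cdot\tilde g_M$ against the standard basis produces identities expressing each discarded $V_{i3}$-coefficient, and each tail coefficient of the higher-degree standard generators, as an explicit polynomial in the retained coefficients of strictly lower degree. Running this recursion reconstructs the entire standard basis of $I$, hence $I$ itself, from $\pi(I)$; in particular $\pi$ is injective and its inverse is polynomial. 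Getting this propagation exactly right — tracking the precise roles of hands, feet and horizontal/vertical border monomials, and the degree shift $d_i=\delta_i+\delta_{i+1}$ of Lemma~\ref{hookP(i)lem} — is where essentially all the combinatorial work will concentrate.

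Finally I would establish surjectivity onto the full product and conclude. Given an arbitrary tuple $(J_i)\in\prod_i\mathbb V(E_{P_i})$, I assign the retained coefficients according to $\tilde{s_i}^{-1}(J_i)$, define all remaining coefficients by the recursion above, and form the ideal $I$ generated by the resulting standard elements. The one nontrivial verification is that $\mathrm{in}(I)=E_P$ \emph{exactly}, i.e. that the constructed generators neither acquire smaller leading monomials nor fail to fill out all of $E_P$; this is the surjectivity-side analogue of the obstacle above, and is precisely where the Brian\c{c}on--Galligo standard-basis input used elsewhere in the paper guarantees that the recursion lands at a genuine point of $\mathbb V(E_P)$ with $\pi(I)=(J_i)$. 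Since both $\pi$ and the reconstruction are given by polynomials, they are mutually inverse morphisms, and therefore $\pi$ is an isomorphism of $\mathbb V(E_P)$ onto $\prod_{i=d}^{\sf j}\mathbb V(E_{P_i})$.
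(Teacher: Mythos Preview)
Your overall strategy --- match dimensions, then exhibit a polynomial inverse --- is sound, but the route you take diverges from the paper's and your recursion has a directional flaw.

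The paper's argument is much shorter and does not attempt a degree-by-degree reconstruction at all. After the dimension count it uses \emph{column} induction: deleting the first column of $P$ yields a partition $P'$, and the colon-ideal map $I\mapsto(I:x)$ gives a morphism $\mathbb V(E_P)\to\mathbb V(E_{P'})$ which, by an earlier result of the fourth author (\cite{Y0}, Proposition~1.7), is a trivial fibration with fiber ${\sf k}^{b_0}$, where $b_0$ is the number of difference-one hooks whose foot lies in the first column. These $b_0$ fiber coordinates are exactly hand-monomial coefficients retained by $\pi$, and deleting a column is compatible with the hook-code decomposition, so the theorem follows by induction on the number of columns. All the analytic content is packaged in the cited fibration result; nothing is re-derived.

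Your degree-by-degree reconstruction is a reasonable alternative in spirit, but the claim that reducing $x\tilde g_M$, $y\tilde g_M$ expresses each discarded $V_{i3}$-coefficient as a polynomial in retained coefficients of \emph{strictly lower} degree is wrong as stated. Take the paper's own example $P=(3,3)$, $T=(1,2,2,1)$: the sole degree-$2$ generator is $\tilde g_{y^2}=y^2+a_1xy+a_2x^2$, where $xy\in V_{2,3}$, so $\pi_2$ discards $a_1$ and retains only $a_2$. There are no retained coefficients of degree $<2$ at all, so your recursion cannot recover $a_1$ from below. What actually happens is that $x\tilde g_{y^2}=xy^2+a_1x^2y+a_2x^3$ reduces modulo $x^3$ to $\tilde g_{xy^2}=xy^2+a_1x^2y$, and $a_1$ reappears as the coefficient of the \emph{degree-$3$} hand monomial $x^2y\in V_{3,2}$ --- it is retained by $\pi_3$, not by any $\pi_j$ with $j<2$. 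In general the discarded coefficients at degree $i$ are encoded in the retained data at degree $i+1$ (and possibly higher), not in lower degrees. Reversing the direction of your induction would repair this, but tracking which degree-$(i+1)$ hand monomial carries which discarded degree-$i$ coefficient is exactly the hook-code combinatorics, and at that point the paper's column induction via $(I:x)$ is both cleaner and already established.
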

\begin{proof} Each $\mathbb V(E_{P_i})$ is an affine space; from Theorem \ref{GTdecompthm} and Proposition \ref{hookP(i)lem} the dimension of $\mathbb V(E_P)$ is the sum $\sum \dim \mathbb V(E_{P_i})$. 
We know that the difference-one hook code of $P_i$ is the $i$-$th$ component $\big(\mathfrak{h}_i(P)\big)$. 
\\
Using the notation of Definition \ref{Pidef1}, consider the set $V_{i1}$, which contains all the degree-$i$ horizontal-border monomials of $E_P$. 
\\Let $V_{i1}=\left\{M_{i,1}, \dots , M_{i,\delta_i}, M_{i,{\delta_i+1}} \right\}$ (numbered from top to bottom). Denote by $b_{i,l}$ the number of degree-$i$ hand monomials above $M_{i,l}$. Then we have $b_{i,{\delta_i+1}}\geq b_{i,\delta_i}\geq \cdots \geq b_{i,1}$, and the affine space $\mathbb V(E_{P_i})$ has dimension $\displaystyle{\sum_{l=1}^{l=\delta_i+1}b_{i,l}=\left|\big(\mathfrak{h}_i(P)\big)\right|}$. We may think of $\mathbb V(E_{P_i})$ as 
$\displaystyle{\prod_{l=1}^{l=\delta_i+1}{\sf k}^{b_{i,l}}}$ (\textsl{$b_{i,l}$ free parameters} for each $M_{i,l}$). Thus, if $P$ is a single-block partition, then we are done. Suppose $P$ is not a single-block partition and let $\hMon (E_P)$ be the set of horizontal-border monomials of $E_P$. 
Suppose $\hMon (E_P)=\left\{y^{\beta_0}, xy^{\beta_1}, \cdots , x^my^{\beta_m} \right \}$, with $\beta_0\geq \beta_1 \geq \cdots \geq \beta_m$. \\
For $0\leq l \leq m$, let $b_l$ be the number of degree-$(l+\beta_l)$ hand monomials above $x^ly^{\beta_l}$. \\
Let $P^{\prime}$ be the partition obtained by deleting the first column of $P$. 
The morphism ${\mathbb V}(E_P) \rightarrow {\mathbb V}(E_{P^{\prime}}): I\mapsto (I:x)$ is a trivial fibration whose fiber has dimension $b_0$ and we have 
${\mathbb V}(E_P) \cong {\sf k}^{b_0}\times {\mathbb V}(E_{P^{\prime}})$ (see for example \cite{Y0}, Prop. 1.7).  It is easy to see that $E_{P^{\prime}}$ has one less horizontal-border monomial than $E_P$. The horizontal-border monomials of $E_{P^{\prime}}$ are deduced from that of $E_P$ by dividing each of the monomials of the set $\left\{xy^{\beta_1}, \cdots , x^my^{\beta_m} \right \}$ by $x$. Also, for $l>0$, the number of degree-$(l+\beta_l)$ hand monomials above $x^ly^{\beta_l}$ is the same as that  of degree-$(l-1+\beta_l)$ hand monomials above $x^{l-1}y^{\beta_l}$ (for the new partition $P^{\prime}$).\\ Suppose $P=\left(p_1^{r_1}, \cdots , p_s^{r_s}\right)$, with $p_1>\cdots >p_s>0$. Iterating the trivial fibration ${\mathbb V}(E_P) \rightarrow {\mathbb V}(E_{P^{\prime}}): I\mapsto (I:x)$, $(p_1-1)$ times, we see that the partition associated to $\left(I:x^{p_1-1}\right)$ is the partition 
$P_{p_1-1}=\left(1^{r_1}\right)$. This partition $\left(1^{r_1}\right)$ is that of a zero dimensional cell of Hilbert function $T_{p_1-1}=(1, \cdots , 1)$.
We then obtain the proof of the Theorem by induction.
\end{proof}

\subsubsection{Examples of the projection map $\pi$.}
\begin{example} Let $T=(1,2,2,1)$.  We have $\delta_2=t_1-t_2=0$, $\delta_3=t_2-t_3=1$, 
$\delta_4=t_3-t_4=1$, and $\mathfrak B(T)=\left(\mathfrak B_2,\mathfrak B_3\right)=\left((1\times 1)_2,(1\times 2)_3\right)$. 
Here the product of  ``small'' Grassmannians is 
$$G=\Grass(\delta_2+1, \delta_2+1+\delta_3)\times \Grass(\delta_3+1, \delta_3+1+\delta_4)=\Grass( 1,2)\times \Grass(2,3).$$
Consider the partition $P=(3,3)$. 
\begin{center}
$P: \ydiagram{3,3}.$
\end{center}
By Definition \ref{def_pi}, we have $V_2=\langle y^2,x^2\rangle$,  $W_2=\langle y^2,xy, x^2\rangle$, $ V_3=\langle xy^2,x^2y,x^3\rangle$ and $ W_3=V_3$. \\
Any element $I$ in the cell ${\mathbb V}(E_P)$ is of the form $I=(y^2+a_1xy+a_2x^2,x^3)$, with $(a_1,a_2)\in {\sf k}^2$. So ${\mathbb V}(E_P)$ 
is a two dimensional affine space. \\
For $I=(y^2+a_1xy+a_2x^2,x^3)\in {\mathbb V}(E_P)$ we get $I\cap W_2=\langle y^2+a_1xy+a_2x^2\rangle$ and the projection of $I\cap W_2$ on $V_2$ is $I_{V_2}=\langle y^2+a_2x^2\rangle \in \Grass(1,V_2)$.  
 Also $I\cap W_3=\langle xy^2+a_1x^2y,x^3\rangle=I_{V_3}\in \Grass(2, V_3)$. 
We view ${\mathbb V}(E_P)$ as a product of two cells, one in $\Grass( 1,2)$ corresponding to  single-block $T_2=(1,1)$ and another one in $\Grass(2,3)$ corresponding to $T_3=(1,2,1)$. \\
The difference-one hook code of $P=(3,3)$ is $${\mathfrak Q}(P)=((1)_2,(1)_3)\subset \mathcal B(T)=((1\times 1)_2, (1\times 2)_3).$$ The code $(1)_2$ corresponds to the vector space $V_2$ and the small cell $\mathbb V(E_{P_2})=\left\{ \langle y^2+a_2x^2\rangle,{a_2\in {\sf k}}\right\}\subset \Grass( 1,2)\cong\Grass(1,V_2)$:
\begin{center}
\begin{tabular}{c}
$P_2$ \\ \ydiagram{2}
\end{tabular}
.
\end{center}
 The code $(1)_3$ corresponds to the vector space $V_3$ and the small cell $\mathbb V(E_{P_3})=\left\{ \langle xy^2+a_1x^2y,x^3\rangle,{a_1\in {\sf k}}\right\}\subset\Grass(2,3)\cong\Grass(2,V_3)$ (note, these are labelled by degree: $V_i\subset R_i$):
 \begin{center}
\begin{tabular}{c}
$P_3$ \\ \ydiagram{2,2}
\end{tabular}
.
\end{center}

So we get ${\mathbb V}(E_P)$ as a product of two affine lines: ${\mathbb V}(E_P)=\mathbb V(E_{P_2})\times \mathbb V(E_{P_3})$. \end{example}
\noindent
\begin{example}\label{53313ex} Let $T=(1,2,3,4,5,4,2)$ and consider $P=(5^3,3,1^3)$.  (See Figure \ref{fig5}.)
We have $(\delta_5,\delta_6,\delta_7)=(1,2,2)$. The two single-block Hilbert functions associated to $T$ are $T_5=(1,2,3,2)$ and $T_6=(1,2,3,4,2)$ 
(see Definition \ref{Pidef1}). \\
We will view the cell ${\mathbb V}(E_P)$ as a product of cells in 
$\Grass( 2,4)\times \Grass(3,5)$. \\
The difference-one hook code of $P$ is ${\mathfrak Q}(P)=\big((1,1)_5,(2,0)_6\big)$. \\
The partition $P_5=(3^2,2)$ of diagonal lengths $T_5$ has hook code ${\mathfrak Q}(P_5)=\big( (1,1)\big)$ (the degree-$5$ block of ${\mathfrak Q}(P)$). 
The partition $P_6=(4^2,2,1^2)$ of diagonal lengths $T_6$ has hook code ${\mathfrak Q}(P_6)=\big( (2,0)\big)$ (the degree-$6$ block of ${\mathfrak Q}(P)$). 
\begin{figure}[!h]
\begin{center}
	\begin{tikzpicture}[scale=.8]
	\draw[dashed,very thin] 	(0,0)--(5,0)
						(0,-1)--(5,-1)
						(0,-2)--(5,-2)
						(0,-3)--(3,-3)
						(0,-4)--(1,-4)
						(0,-5)--(1,-5)
						(0,-6)--(1,-6)
						(0,0)--(0,-7)
						(1,0)--(1,-4)
						(2,0)--(2,-4)
						(3,0)--(3,-3)
						(4,0)--(4,-3);			
	\draw[thin, red,dashed,fill=yellow!10] (4,-1) rectangle (5,-2)
							 (2,-3) rectangle (3,-4)
							 (4,-2) rectangle (5,-3)
							 (0,-6) rectangle (1,-7);
	\draw[thin, blue,fill=black!5,rounded corners=3mm] (1,-4) rectangle (2,-5)
											    (2,-4) rectangle (3,-5)
											    (3,-3) rectangle (4,-4)
											    (5,0) rectangle (6,-1);
	\draw[red] node at (4.5,-1.5) {\footnotesize{$x^4y$}}
			  node at (2.5,-3.5) {\footnotesize{$x^2y^3$}}
			  node at (4.5,-2.5) {\footnotesize{$x^4y^2$}}
			  node at (0.5,-6.5) {\footnotesize{$y^6$}};	
	\draw[blue] node at (1.5,-4.5) {\footnotesize{$xy^4$}}
			    node at (2.5,-4.5) {\footnotesize{$x^2y^4$}}
			    node at (3.5,-3.5) {\footnotesize{$x^3y^3$}}
			    node at (5.5,-0.5) {\footnotesize{$x^5$}};	  
	\draw[very thick] (5,0)--(5,-3)--(3,-3)--(3,-4)--(1,-4)--(1,-7)--(0,-7);
         \end{tikzpicture}
         \caption{Ferrers diagram of $P=(5^3,3,1^3)$: \textsl{hand monomials} are marked in red and 
         \textsl{border monomials} are {marked} in blue (Example \ref{53313ex}).}\label{fig5}
\end{center}
\end{figure}
\\
We now display the isomorphism $\pi: {\mathbb V}(E_P)\to{\mathbb V}(E_{P_5})\times {\mathbb V}(E_{P_6})$. \\

By Definition \ref{def_pi}, we have 
$$V_5=\langle xy^4,x^2y^3,x^4y,x^5 \rangle, W_5=R_5, V_6=\langle y^6,x^2y^4,x^3y^3,x^4y^2,x^5y\rangle=W_6.$$
The projection of $I_5$ onto $V_5$ is a $2$-dimensonial vector space $$I_{V_5}=\langle xy^4+\alpha_1x^2y^3+\alpha_2x^4y,x^5 \rangle ,
(\alpha_1,\alpha_2)\in {\sf k}^2.$$
The projection of $I_6$ onto $V_6$ is a $3$-dimensonial vector space $$I_{V_6}=\langle x^2y^4+a_1x^4y^2,x^3y^3+a_2x^4y^2,x^5y \rangle ,
(a_1,a_2)\in {\sf k}^2.$$
So we have $I_{V_5}\in \Grass(2,V_5)\cong \Grass(2,4)$, $I_{V_5}\in \Grass(3,V_6)\cong \Grass(3,5)$ and ${\mathbb V}(E_P)$ can be viewed as ${\mathbb V}(E_{P_5})\times {\mathbb V}(E_{P_6})$. \vskip 0.2cm\noindent
\textbf{Note.} Suppose we are given $(L_5,L_6)\in {\mathbb V}(E_{P_5})\times {\mathbb V}(E_{P_6})$ with 
$L_5=\langle xy^4+\alpha_1x^2y^3+\alpha_2x^4y,x^5 \rangle , (\alpha_1,\alpha_2)\in {\sf k}^2$ and 
$L_6=\langle x^2y^4+a_1x^4y^2,x^3y^3+a_2x^4y^2,x^5y \rangle , (a_1,a_2)\in {\sf k}^2$. Then, using standard basis techniques (Theorem I.1.9 of \cite{brian}, Propositions 2 and 3 of \cite{brian-gal}) one can see that there 
is a unique ideal $I\in {\mathbb V}(E_P)$ such that $I_{V_5}=L_5$ and $I_{V_6}=L_6$: 
$$
I=\left(x^5, x^3y^3+a_2x^4y^2, x((y^4+a_1x^2y^2)+\alpha_1(xy^3+a_2x^2y^2)+\alpha_2x^3y), y^7\right).
$$
\end{example}

\noindent
In connection with Lemma \ref{degree_i+1_rel_gen_formulas} where we will be counting the degree $i+1$ relations and corner-monomials (generators -Definition \ref{def_corner-mon}) of $E_P$ it is interesting to note that,
\begin{lemma}\label{monom_EPi_corners_and_gens_bijection}
The bijection $s_i : V_{i1}\cup V_{i2} \longrightarrow \Mon (R_{d_i})$ (Equation \ref{monomeq})  induces 
\begin{enumerate}[(i).]
\item a one to one correspondence between the degree $i+1$ relations of $E_P$ and the degree $d_i+1=\delta_i+\delta_{i+1}+1$ 
relations of $E_{P_i}$
\item a one to one correspondence between the first  (numbering from top to bottom-- lex order) $\delta_{i+1}$ degree $i+1$ \textsl{vertical-border monomials} of the ideal $E_P$ and the degree $d_i+1$ \textsl{vertical-border monomials} of $E_{P_i}$
\item a one to one correspondence between the degree $i+1$ corner-monomials of $E_P$ and the degree $d_i+1$ corner-monomials of $E_{P_i}$.
\end{enumerate}
\end{lemma}
\begin{proof}
\begin{enumerate}[(i).]
\item Suppose that the monomial $x^{\alpha}y^{i+1-\alpha}$ ($0<\alpha <i+1$) corresponds to a degree $i+1$ relation. Then $x^{\alpha-1}y^{i+1-\alpha}$ 
is a \textsl{horizontal-border monomial} of $E_P$ and $x^{\alpha}y^{i-\alpha}$ is a \textsl{vertical-border monomial} of $E_P$. 
Now, consider the set $(E_P)_{i,\alpha}$ of degree $i$ \textsl{horizontal-border monomials} of $E_P$ that are above $x^{\alpha}y^{i-\alpha}$. If this set is empty, then $x^i\notin (C_P)_i$, so $x^{\alpha}y^{i-\alpha}\in  V_{i1}$ and the degree $i+1$ relation of $E_P$ corresponding to $x^{\alpha}y^{i+1-\alpha}$ is sent to a degree $d_i+1$ relation of $E_{P_i}$. If the set $(E_P)_{i,\alpha}$ is not empty, let $x^{\alpha^{\prime}}y^{i-\alpha^{\prime}}$ be the first element of $(E_P)_{i,\alpha}$ just above $x^{\alpha}y^{i-\alpha}$. By definition, $s_i$ sends $x^{\alpha-1}y^{i+1-\alpha}$ and $x^{\alpha^{\prime}}y^{i-\alpha^{\prime}}$ to two consecutive \textsl{horizontal-border monomials} of $E_{P_i}$, resulting to a degree $d_i+1$ relation of $E_{P_i}$.
\item By Definition \ref{defi_horiz_vertic_border_monom} we know that any degree $i+1$ \textsl{vertical-border monomial} is just to the right of a unique degree $i$ hand monomial. The second statement of the Lemma is then just a remark based on the fact that any element of $V_{i2}$ is a degree $i$ hand  monomial of $P$ and $s_i$ sends the elements of $V_{i2}$ to the degree $d_i$ hand monomials of $E_{P_i}$. 
\item Suppose $x^{\alpha}y^{i+1-\alpha}$ is a degree $i+1$ hook corner of $E_P$.
\begin{enumerate}[a)]
\item If $\alpha=0$, then one can easily see that $y^{d_i}\notin E_{P_i}$, so $y^{d_i+1}$ is a corner-monomial of $E_{P_i}$.
\item If $\alpha>0$ then the corner-monomial $x^{\alpha}y^{i+1-\alpha}$ is also a \textsl{vertical-border monomial} of $E_P$  that will correspond via $s_i$ to a degree $d_i+1$ corner-monomial of $E_{P_i}$.
\end{enumerate}
\end{enumerate}
\end{proof}

\section{Number of generators for a single-block partition.}\label{singlekappasection}

We first state the known bounds for the number of generators of a graded ideal $I$ of Hilbert function $H(R/I)=T$ for arbitrary $T$ (Lemma \ref{kappaTlem}).
In Theorem \ref{kappathm} we determine the number of generators $\kappa(P)$ for generic ideals in the cell $\mathbb V(E_P)$ where $P$ has diagonal lengths $T$ satisfying the single-block Equation \eqref{Tsingleeqn}.
\subsection{Lower bound $\kappa(T)$ on the number of generators of an ideal $I$ in $\G_T$.}
We recall Equation \ref{Teq} for an arbitrary codimension two Hilbert function $T$:
\begin{equation*}
T=(1,2,\ldots, d-1,d,t_{d},\ldots, t_{\sf j},0) \text { where } d\ge t_d\ge t_{d+1}\ge \cdots \ge t_{\sf j}>0.
\end{equation*}
Here ${\sf j}$ is the (highest) socle degree of $A=R/I$. Recall from Section \ref{GTsec} that $\G_T$ is the irreducible projective variety parametrizing the graded ideals $I$ in $R={\sf k}[x,y]$ such that $A=R/I$ has Hilbert function $T$.
\begin{definition}[Order of a Hilbert function $T$]\label{order_of_Tdef}
Let $T$ be a sequence satisfying Equation \eqref{Teq}. Set $\nu(T)=d$, usually called the order of $T$: that is $\nu(T)$ is the order of graded ideals $I\in \G_T$ -- that define an Artinian algebra $A=R/I$ of Hilbert function $T$.
\end{definition}

\begin{definition}\label{kappadef} We let $\kappa(T)$ be the minimum number of generators for the ideal $I$ corresponding to a generic element of $\G_T$. 
If $P$ is a partition of diagonal lengths $T$, we denote by $E_P$ the monomial ideal associated to $P$ and set $\kappa(P)$ to be the minimum number of generators for a generic element $I$ in the cell $\mathbb{V}(E_P)$. 
\end{definition}

Given a sequence $T$ satisfying Equation \eqref{Teq}, recall that we denote by $\delta_i$ the first difference function of $T$:
\begin{equation}\label{diffeq}
\delta_i=t_{i-1}-t_{i}  \text { for }  i \in [\nu(T),{\sf j}+1].
\end{equation}
\par
The following result (i)-(ii) is shown in \cite[Theorem 4.3, Lemma 4.5]{Ia}, but a separate proof will also result from
our work here (see Remark \ref{lower_bound_for_degree_i+1_gens}).  A different proof of (ii) is given by M. Mandal and M.E. Rossi in \cite[Theorem~2.1]{MR}. The statement (iii) is obvious. 
We denote by $[k]^+=\max\{k,0\}$.
\begin{lemma}\label{kappaTlem} Let $T$ satisfy Equation \eqref{Teq}, and let $I$ be a homogeneous ideal such that $A=R/I$ has Hilbert function $T$. Then
\begin{enumerate}[(i).]
\item  $I$ has at least  $[\delta_i-\delta_{i-1}]^+$ generators of each degree $i\ge \nu(T)$.
\item  A generic graded ideal $I\in\G_T$ has
\begin{equation}\label{kappaTeq}
\kappa(T)=1+\delta_{\nu(T)}+\sum_{i> \nu(T)}[\delta_{i+1}-\delta_i]^+
\end{equation} 
generators, exactly $[\delta_i-\delta_{i-1}]^+$ in each degree $i\ge \nu(T)$.

\item  The ideal $E_{P_0}, P_0=T$ (listed as a partition) has $\nu(T)+1=1+\sum_{i\ge \nu(T)} \delta_i$ generators, and $\kappa_{E_{P_0},i}=\delta_i$ for $i> \nu(T)$ and $1+\delta_{\nu(T)}$ for $i=\nu(T)$. This is the termwise maximum $\kappa_I(z)$ that occurs for any ideal $I\in \G_T$: that is $\kappa_{I,i}\le \delta_i$ for $i>\nu(T)$ and $\kappa_{I,\nu(T)}\le 1+\delta_{\nu(T)}$.
\end{enumerate}
\end{lemma}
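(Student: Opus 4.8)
The plan is to pass to the minimal graded free resolution of $A=R/I$ over $R={\sf k}[x,y]$. Since $A$ is Artinian, $I$ is $\mathfrak m$-primary of codimension two, so by Auslander--Buchsbaum the projective dimension of $A$ equals $2$ and, by Hilbert--Burch, its minimal resolution has the shape
\[
0\to \bigoplus_i R(-i)^{\rho_i}\to \bigoplus_i R(-i)^{\kappa_i}\to R\to A\to 0,
\]
where $\kappa_i=\kappa_{I,i}$ is the number of degree-$i$ minimal generators of $I$ and $\rho_i$ the number of degree-$i$ minimal first syzygies. Multiplying the Hilbert series $\sum_i t_i z^i$ by $(1-z)^2$ and comparing the coefficient of $z^i$ gives, for $i\ge 1$, the identity $\kappa_i-\rho_i=\delta_i-\delta_{i-1}$, the second difference of $T$, where I extend $\delta_i=t_{i-1}-t_i$ to all $i$ so that $\delta_i=-1$ for $i<\nu(T)$. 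Since $\kappa_i\ge 0$ and $\rho_i\ge 0$, this forces $\kappa_i\ge[\delta_i-\delta_{i-1}]^+$ in every degree, which is exactly statement (i).

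For the termwise upper bounds in (iii) I would use a feature special to two variables: if $0\ne W\subsetneq R_m$ then $\dim_{\sf k}(R_1W)\ge \dim_{\sf k}W+1$. This follows by an initial-monomial count: if $\mu$ is the ${\sf k}$-order initial monomial of highest $y$-degree occurring in $W$, then the $\dim_{\sf k}W$ monomials $x\cdot\mathrm{in}(W)$ together with $y\mu$ are distinct initial monomials of elements of $R_1W$. Applying this with $W=I_{i-1}$, which is a proper nonzero subspace precisely when $i>\nu(T)$, and using $\kappa_{I,i}=\dim_{\sf k}I_i-\dim_{\sf k}(R_1I_{i-1})$, yields $\kappa_{I,i}\le t_{i-1}-t_i=\delta_i$ for $i>\nu(T)$; in degree $\nu(T)$ one has $I_{\nu(T)-1}=0$, hence $\kappa_{I,\nu(T)}=\dim_{\sf k}I_{\nu(T)}=1+\delta_{\nu(T)}$ for every $I\in\G_T$. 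To see that $E_{P_0}$ attains these maxima, note that $E_{P_0}$ is the staircase (lex-segment) monomial ideal whose standard monomials fill the Ferrers diagram of $P_0=T$: here $R_1(E_{P_0})_{i-1}$ spans exactly the degree-$i$ monomials $x^ay^{i-a}$ with $a\ge t_{i-1}$, so the degree-$i$ minimal generators are the corner monomials with $t_i\le a<t_{i-1}$, giving $\delta_i$ of them for $i>\nu(T)$ and $1+\delta_{\nu(T)}$ in degree $\nu(T)$. Telescoping, $\sum_{i\ge\nu(T)}\delta_i=t_{\nu(T)-1}=\nu(T)$, so $E_{P_0}$ has $\nu(T)+1$ generators, which proves (iii).

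For (ii) I would combine semicontinuity with the existence of one optimal ideal. The map $I\mapsto\dim_{\sf k}(R_1I_{i-1})$ is the rank of the multiplication $R_1\otimes I_{i-1}\to R_i$, hence lower semicontinuous on $\G_T$; as $\dim_{\sf k}I_i$ is constant, $I\mapsto\kappa_{I,i}$ is upper semicontinuous, so each $\kappa_{I,i}$ attains its minimum $m_i$ on a nonempty open set $U_i$. Because $\G_T$ is irreducible (Proposition \ref{GTdimthm}) and only finitely many degrees are relevant, $\bigcap_i U_i$ is dense open, and on it all $\kappa_{I,i}$ are simultaneously minimal. By (i) we have $m_i\ge[\delta_i-\delta_{i-1}]^+$; to pin down equality it suffices to produce a single graded ideal of Hilbert function $T$ whose minimal resolution has no consecutive cancellation, that is $\kappa_i\rho_i=0$ for all $i$, equivalently $\kappa_i=[\delta_i-\delta_{i-1}]^+$. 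Summing this generic per-degree count over $i\ge\nu(T)$, with the degree-$\nu(T)$ term equal to $[\delta_{\nu(T)}+1]^+=1+\delta_{\nu(T)}$, produces the claimed value of $\kappa(T)$.

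The main obstacle is exactly this last existence statement. The natural self-contained route is a generic Hilbert--Burch construction: take $I$ to be the ideal of maximal minors of a generic homogeneous $\kappa(T)\times(\kappa(T)-1)$ matrix whose column/row degrees are dictated by the candidate Betti numbers $\kappa_i=[\delta_i-\delta_{i-1}]^+$ and $\rho_i=[\delta_{i-1}-\delta_i]^+$. One must then check that this degree type is \emph{admissible} (all entries below the relevant diagonal have strictly positive degree) so that the generic minors are nonzero, cut out a codimension-two ideal of Hilbert function $T$, and give a minimal resolution; the delicacy is that $\delta$ need not be monotone, so the admissibility verification is the genuinely technical point. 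Alternatively one may invoke \cite[Theorem 4.3, Lemma 4.5]{Ia}, or derive existence from the standard-basis analysis developed later in this paper (Remark \ref{lower_bound_for_degree_i+1_gens}).
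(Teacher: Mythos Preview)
Your argument is correct and is essentially the classical route via the minimal free resolution, which is \emph{not} how the paper handles this lemma. The paper does not prove Lemma~\ref{kappaTlem} directly at all: it cites \cite[Theorem~4.3, Lemma~4.5]{Ia} for (i)--(ii), declares (iii) ``obvious,'' and remarks that an independent proof of (i)--(ii) falls out of the later hook-code analysis (Theorem~\ref{extra_gens_in_given_degree} and Remark~\ref{lower_bound_for_degree_i+1_gens}), where the lower bound $[\delta_{i+1}-\delta_i]^+$ appears as one term inside the $\max$ of Equation~\eqref{beta0eq}, and equality is read off for the big-cell partition whose hook code is the full box $\mathfrak B(T)$. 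So the paper's internal argument is combinatorial and cell-by-cell, specific to the $\G_T$ machinery.

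Your approach, by contrast, is structure-theoretic and would work verbatim for any codimension-two perfect graded ideal: Hilbert--Burch plus the second-difference identity $\kappa_i-\rho_i=\delta_i-\delta_{i-1}$ gives (i) immediately, the $\dim R_1W\ge\dim W+1$ bound in two variables gives the upper bound in (iii), and semicontinuity on the irreducible $\G_T$ reduces (ii) to exhibiting one ideal with pure Betti numbers. You are right to flag this existence step as the only nontrivial point; your generic Hilbert--Burch matrix suggestion is the standard way to close it, and the admissibility condition (that the relation degrees strictly dominate the generator degrees in the paired ordering, so the relevant entries have positive degree) does hold here, but verifying it carefully requires exactly the kind of case analysis you allude to. Since the paper itself defers this to \cite{Ia} or to its own later work, your treatment is at least as complete as the paper's, and for (iii) it is actually more explicit.
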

\begin{definition}\label{special-partition-definition}
If $P$ is a partition of diagonal lengths $T$ such that $\kappa(P)\neq \kappa(T)$, then we say $P$ is \emph{special}. 
If $\kappa(P) = \kappa(T)$ we say $P$ is \emph{non-special}. 
\end{definition}\par\noindent
\begin{example}\label{12321ex} Let $T=(1,2,3,2,1)$. We have $\kappa (T)=2$, as the generic ideal in $\G_T$ is a complete intersection of generator degrees $(3,3)$. For $P=(5,3,1)$ we also have $\kappa (P)=2$: here $\mathbb V(E_P)$ is open dense in $\G_T$, so $P=(5,3,1)$ is non-special. But for $P=(3,3,1^3)$ we have $\kappa(P)=3$ since an $R$-relation between the generators $y^5,y^2x+\cdots$ cannot yield the generator $x^3$: so $P=(3,3,1^3)$ is special.
\end{example}
\subsection{Single-block partitions $P$, and $\kappa(P)$.}\label{single block-partition-and-kappa-section}
Henceforth in this section we let $T$ be a Hilbert function that satisfies 
\begin{equation}\label{Tsingleeqn}
T=(1,2,\ldots, d-1, d,t_d,0).
\end{equation}
where $d\ge t_d$ and we let $s=d+1-t_d$. We term this a \emph{single-block} Hilbert function.
 In this case, $\G_T$ is isomorphic to the Grassmannian variety $\Grass(s,R_d)$ where $R_d$ is the vector space of the degree $d$ homogeneous forms of $R={\sf k}[x,y]$:
$$
\begin{array}{rcl}
\Phi: \G_T & \rightarrow & \Grass(s, R_d)\\
I & \mapsto & I_d 
\end{array}
.
$$
Also, by Equation \ref{kappaTeq} we have for a single-block Hilbert function
\begin{equation}\label{kappaTsingleeq}
\kappa (T)=s+\delta, \text { where }\delta=\max\{t_d+1-s,0\}. 
\end{equation}
Let $P$ be a partition of diagonal lengths $T$. The corners of the Ferrers diagram of $P$ correspond to monomials $x^{\alpha}y^{\beta}$ that belong to a minimal set of generators for the monomial ideal $E_P$. We may call such monomials,  corner-monomials of $P$.
Let $I\in \G_T$ be an element of the Jordan cell $\mathbb{V}(E_P)$. Then the corner-monomials of $P$ are leading terms of a system of generators 
$\mathcal{B}(I)$ of $I$. The system $\mathcal{B}(I)$ may not be minimal. 
By definition of $T$, a minimal set of generators of $E_P$ must contain $s$ degree $d=\nu(T)$ corner-monomials. These degree $d$ corner-monomials are 
leading monomials for the degree $d$ elements of the system of generators $\mathcal{B}(I)$. Since we are looking for a minimal set of generators for $I$, we want a criterion to decide that a degree $d+1$ element of $\mathcal{B}(I)$ can be obtained using a relation involving degree $d$ elements of 
$\mathcal{B}(I)$. That is where corner ``kick-off'' comes into play.

\medskip
Let $a$ be integer such that $0\leq a <d=\nu(T)$ and set $d^{\prime}=d-a$. Suppose $m$ is an integer such that $1<m\leq  d^{\prime}$. 
For any integer $i$ such that $0\leq i\leq m$, set $\displaystyle{K_i=x^{a}\cdot\left(x^{m-i}y^{d^{\prime}-m+i}\right)}$. 
The $K_i$'s form a set of $m+1$ consecutive degree $d$ monomials in two variables:
$$
x^{a+m}y^{d^{\prime}-m},x^{a+m-1}y^{d^{\prime}-m+1}, \ldots , x^{a+1}y^{d^{\prime}-1},x^{a}y^{d^{\prime}}
.
$$
From these $m+1$ consecutive monomials we have $m$ relations: $yK_i-xK_{i+1}=0$, $(0\leq i <m)$. Suppose the $K_i$'s are leading monomials of 
some elements of $\mathcal{B}(I)$.
Note that by definition, if $f_0, \ldots, f_m$ are degree-$i$ forms such that $f_i$ has leading monomial $K_i$, then $x^a$ divides any element of the ideal generated by
$(f_0, \ldots, f_m)$. Thus, assuming that $\dim_{{\sf k}}\left(R_1\cdot \langle f_0, \ldots, f_m \rangle \right)=2(m+1)$ requires $2(m+1)\leq d^{\prime}+2=\dim_{{\sf k}}(R_{d^{\prime}+1})$, that is, $2m\leq d^{\prime}$. 

\medskip \noindent 
For simplicity we now assume $a=0$, so $\displaystyle{K_i=x^{m-i}y^{d-m+i}}$,  $2m\leq d$, and we let $N_1, \ldots , N_m$ be the $m$ degree $d+1$ monomials given by
$$
\left\{
\begin{array}{l}
N_i=x^{\alpha_i}y^{\beta_i}, \ \alpha_i+\beta_i=d+1, 
\\
0\leq \beta_1<\beta_2< \cdots <\beta_m<d-m
\\
m+1<\alpha_m<\alpha_{m-1}< \cdots <\alpha_1\leq d+1
\end{array}
\right.
.
$$\par
Concerning the next Lemma, although J.~Brian\c{c}on and A.~Galligo state their standard basis result that we use in characteristic zero, it is valid also for characteristic greater than the socle degree $d$.
 This is the key step in the paper where we need to restrict the characteristic of ${\sf k}$.

\begin{lemma}[How to kick off corners]\label{corner-kick-off-lemma} Assume that the characteristic of $\sf k$ is zero, or that $\sf k$ is infinite of characteristic $p$ greater than the socle degree $d$.
With the above notation, there exist $m+1$ degree $d$ forms $f_0, \ldots, f_m$ such that $f_i$ has leading monomial $K_i$ 
and $N_i$ is a leading monomial of a degree $d+1$ element of the ideal generated by $(f_0, \ldots, f_m)$.
\end{lemma}
\begin{proof}
Using a technique of standard basis calculations developed by J.~Brian\c{c}on and A.~Galligo in \cite{brian-gal} (requiring the restriction on the characteristic of $\sf k$)\footnote{See \cite[Theorem I.1.9]{brian}, \cite[Props. 2,3]{brian-gal}, also \cite[\S 1]{PfR}.},  we can inductively  construct $f_0, \ldots, f_m$ such that $N_i\in (f_0, \ldots, f_m)$.  Let $$f_0=x^my^{d-m}, \ f_1=x^{m-1}y^{d-m+1}+\lambda_1x^{\alpha_1-1}y^{\beta_1},$$ 
where $\lambda_1\in {\sf k}$.
One can see that $xf_1-yf_0=\lambda_1x^{\alpha_1}y^{\beta_1} $, so if 
$\lambda_1\neq 0$, we have 
$$N_1=x^{\alpha_1}y^{\beta_1}\in (f_0, \ldots, f_m).$$
In general, for $0\leq i<m$, suppose that we have $f_i=x^{m-i}u_i+\lambda_ix^{\alpha_i-1}y^{\beta_i}$ where $u_i$ is a degree $d-m+i$ form 
such that $u_i(0,y)=y^{d-m+i}$. 
\\
Then we set 
$$f_{i+1}=x^{m-i-1}y\left(u_i+\lambda_ix^{\alpha_i-1-m+i}y^{\beta_i}\right)+\lambda_{i+1}x^{\alpha_{i+1}-1}y^{\beta_{i+1}}.$$
So, $xf_{i+1}-yf_i=\lambda_{i+1}x^{\alpha_{i+1}}y^{\beta_{i+1}}$ and for $\lambda_{i+1}\neq 0$, we have 
$N_{i+1}\in (f_0, \ldots, f_m)$.
Note that for $i=0$, $u_0=y^{d-m}, \lambda_0=0$; for $i=1$, $u_1=y^{d-m+1}$; thus, inductively, we have constructed 
$f_0, \ldots, f_m$ such that $N_i\in (f_0, \ldots, f_m)$.
\end{proof}

\begin{remark}[Choosing which corner should be kicked off]\label{selected-corner-kick-off}
Given  $r$ indices $i_1, \ldots, i_r$ such that $1\leq i_1< \ldots <i_r\leq m$, in the inductive construction of 
$(f_0, \ldots, f_m)$ of Lemma \ref{corner-kick-off-lemma}, if we let $\lambda_{i_l}=0$ ($1\leq l\leq r$), then none of the monomials 
$N_{i_l}$ will be kicked off. So, if $\lambda_{i_l}=0$ for $1\leq l\leq r$, then $N_{i_l}\notin (f_0, \ldots, f_m)$.
\end{remark}
We remind the reader of the Definition \ref{hookdef} and Figure \ref{hook1fig} of a difference-one hook, and Definition \ref{hookcodedef} of the hook code. In the next Lemma and Theorem a hook code of $P$ has a single non-zero partition $\mathfrak h_{d}(P)=\mathfrak{Q}(P)$, which for short we term its hook code. Note that $n=\delta_{d+1}$ is the number of parts of $\mathfrak h_{d}(P)$ and some parts may be zero.

\begin{lemma}[Counting the corner-monomials of $P$]\label{countcornerlem}
Let $T=(1,2, \ldots , d,t_d=t,0)$, $t>0$ and  set $s=d+1-t$. Suppose that $P$ is a partition of diagonal lengths $T$ and difference-one hook code $\mathfrak{Q}(P)=(h_1^{l_1}, \dots, h_n^{l_n})$ (where $s\geq h_1> h_2 >\cdots >h_n \geq 0$). Then the minimum number of generators $b_1(E_P)$ of the monomial ideal $E_P$ is given by the following formula. 
$$
\begin{array}{ll}
b_1(E_P)=s+t-n,&\mbox{ if } h_1<s \mbox{ and } h_n>0\\
b_1(E_P)=s+t-n+1, &\mbox{ if } h_1=s \mbox{ and } h_n>0, \mbox{ or } h_1<s \mbox{ and } h_n=0 \\
b_1(E_P)=s+t-n+2,&\mbox{ if } h_1=s \mbox{ and } h_n=0.
\end{array}
$$
\end{lemma}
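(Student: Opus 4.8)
The plan is to convert the statement into a finite combinatorial identity: I would parametrise the single-block partitions $P\in\mathcal P(T)$ explicitly, then evaluate both $b_1(E_P)$ and the hook-code data $(n,h_1,h_n)$ against that parametrisation. Since $T=(1,2,\dots,d,t,0)$ of \eqref{Tsingleeqn} forces the diagonals $D_0,\dots,D_{d-1}$ of any such $P$ to be completely full and $D_{d+1}$ to be empty, the Ferrers diagram of $P$ is the full triangle $\{x^ay^b:a+b\le d-1\}$ together with exactly $t$ cells on the top antidiagonal $a+b=d$. Because the triangle is full, adjoining \emph{any} $t$ of the $d+1$ positions on that antidiagonal yields a legitimate Young diagram, so $P$ is encoded by a set $S=\{b_1<\dots<b_t\}\subseteq\{0,\dots,d\}$, where $b\in S$ means the (top-indexed) row $b$ is extended to length $d-b+1$. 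Writing $\epsilon_b=1$ for $b\in S$ and $0$ otherwise, row $b$ has length $(d-b)+\epsilon_b$ for $b\le d-1$ and length $\epsilon_d$ for $b=d$; this recovers $|\mathcal P(T)|=\binom{d+1}{t}=\binom{s+t}{t}$.

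Next I would compute the corner count. The corner-monomials of $P$ are the minimal generators of $E_P$, so $b_1(E_P)=1+(\text{number of distinct positive row-lengths of }P)$, equivalently two plus the number of strict descents in the weakly decreasing length-sequence. From the lengths in Step~1 a descent between consecutive rows fails precisely at a pattern $\epsilon_b\epsilon_{b+1}=01$, giving
\[
b_1(E_P)=(d+2)-c_{01}-\mathbf 1\{d\notin S\},
\]
where $c_{01}$ is the number of occurrences of $01$ in the word $\epsilon_0\cdots\epsilon_d$ and the final term corrects for the single zero part present when $d\notin S$.

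Then I would read the hook code off $S$. Using $|D_{d+1}|=0$, the degree-$d$ hands of $P$ are exactly the $t$ top-antidiagonal cells, i.e. the rows $b\in S$, listed in lex order by increasing $b$. For a corner $x^ay^b$ in such a row the arm and the entire leg lie inside the full triangle and are automatically present, so by Definition~\ref{hookdef} the induced hook is difference-one if and only if the one cell $x^ay^{d-a}$ just below its foot is absent, i.e. $d-a\notin S$. Counting admissible $a$ shows that the hand in row $b_k$ supports exactly $g(b_k)=(d-b_k)-(t-k)$ difference-one hooks, so by Definition~\ref{hookcodedef} the block $\mathfrak h_d(P)=(g(b_1),\dots,g(b_t))$ sits inside the $t\times s$ box $\mathfrak B_d(T)$. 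From this one reads: $n$ equals the number of maximal runs of consecutive integers in $S$; $h_1=g(b_1)=s-b_1$, so $h_1=s\iff 0\in S$; and $h_n=g(b_t)=d-b_t$, so $h_n=0\iff d\in S$.

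Finally I would match the two counts. Every maximal run of $S$ is immediately preceded by a $0$ except possibly the one starting at position $0$, so $c_{01}=n-\mathbf 1\{0\in S\}$. Substituting this and $d=s+t-1$ into the display of Step~2 gives
\[
b_1(E_P)=s+t-n+1+\mathbf 1\{0\in S\}-\mathbf 1\{d\notin S\}=s+t-n+1+\mathbf 1\{h_1=s\}-\mathbf 1\{h_n>0\},
\]
and evaluating the two indicators over the four combinations of $h_1\in\{<s,=s\}$ and $h_n\in\{=0,>0\}$ reproduces the three-line formula verbatim. The hard part will be Step~3: pinning down which hooks anchored at a top row are difference-one and packaging the answer as the arithmetic progression $g(b_k)=(d-b_k)-(t-k)$. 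Fullness of the triangle makes arm- and leg-membership automatic and collapses the whole condition to the single below-the-foot test $d-a\notin S$, but getting this reduction right — including the boundary cases $a=0$ and the shortest legs — is the delicate point; once it is in place, Steps~2 and~4 are bookkeeping with the binary word $\epsilon$.
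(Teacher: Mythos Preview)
Your proposal is correct. The paper's own proof of this lemma is the single sentence ``This is an easy count that we obtain by looking at the Ferrers diagram of $P$,'' so your argument is a complete and careful execution of what the paper leaves to the reader. Your parametrisation of $\mathcal P(T)$ by size-$t$ subsets $S\subseteq\{0,\dots,d\}$, the computation of row-lengths and hence of $b_1(E_P)$ via the binary word $\epsilon$, and the identification of $g(b_k)=(d-b_k)-(t-k)$ as the number of difference-one hooks at the $k$-th hand are all accurate; the final indicator bookkeeping matches the three cases exactly. One minor remark: you use $b_1,\dots,b_t$ for the elements of $S$ while the statement uses $b_1(E_P)$ for the generator count, so in a polished write-up you may want to rename the elements of $S$ to avoid any visual clash.
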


\begin{proof}
This is an easy count that we obtain by looking at the Ferrers diagram of $P$.
\end{proof}\par
Note that $b_1(E_P)-\kappa(P)$ counts the number of degree $d+1$ corner-monomials we have been able to kick-off.

\begin{example}\label{972bex}
Suppose $T=(1,2,3,4,5,6,7,8,4)$. Then $d=8$, $t_d=4$ and $s=5$. Let $P$ be the partition of diagonal lengths $T$ 
defined by $P=(9,7^2,6,4^2,2,1)$ (See Figure \ref{fig11}). We have $\mathfrak{Q}(P)=(5,4^2,3)$. The monomial ideal $E_P$ associated to $P$ is 
generated by $\left(y^8,xy^7,x^2y^6,x^4y^4,x^7y,x^6y^3,x^9\right)$. 
Using Lemma \ref{corner-kick-off-lemma}, we see that the degree $9$ corners of $P$ associated to $x^6y^3$
and {$x^9$} can be kicked-off using the degree $8$ corners associated to the consecutive monomials 
{$y^8, xy^7, x^2y^6$}.
\begin{figure}[!h]
\begin{center}
	\begin{tikzpicture}[scale=.8]
	\draw[dashed,very thin] 	(0,0)--(9,0)
						(0,-1)--(7,-1)
						(0,-2)--(7,-2)
						(0,-3)--(6,-3)
						(0,-4)--(4,-4)
						(0,-5)--(4,-5)
						(0,-6)--(2,-6)
						(0,-7)--(1,-7)
						(0,0)--(0,-8)
						(1,0)--(1,-7)
						(2,0)--(2,-6)
						(3,0)--(3,-6)
						(4,0)--(4,-4)
						(5,0)--(5,-4)
						(6,0)--(6,-3)
						(7,0)--(7,-1)
						(8,0)--(8,-1);
	\draw[tips, -{Latex[open,length=8pt,bend]},dashed,red,thick] 	(1.5,-8.5) to [bend left] (6,-3.5);
	\draw[tips, -{Latex[open,length=8pt,bend]},red,thick] (2.5,-7.5) to [bend left] (9.2,-0.5);	
	\draw node at (1.5,-8.5) {\faSoccerBallO};
	\draw node at (2.5,-7.5) {\faSoccerBallO};
	\draw[thin, blue,fill=black!5,rounded corners=3mm] (9,0) rectangle (10,-1)
											   (6,-3) rectangle (7,-4)
											   (2,-6) rectangle (3,-7)
											   (1,-7) rectangle (2,-8)
											   (0,-8) rectangle (1,-9);
	\draw[thin, blue,fill=green!10,rounded corners=3mm] (2,-7) rectangle (3,-8)
											      (1,-8) rectangle (2,-9);	
	\draw[tips, -{Latex[open,length=8pt,bend]},dashed,red,thick] 	(1.5,-8.5) to [bend left] (6,-3.5);
	\draw[tips, -{Latex[open,length=8pt,bend]},red,thick] (2.5,-7.5) to [bend left] (9.2,-0.5);	
	\draw node at (1.5,-8.5) {\faSoccerBallO};
	\draw node at (2.5,-7.5) {\faSoccerBallO};
	\draw[blue] node at (9.5,-0.5) {\footnotesize{$x^9$}}
			    node at (6.5,-3.5) {\footnotesize{$x^6y^3$}}
			    node at (2.5,-6.5) {\footnotesize{$x^2y^6$}}
			    node at (1.5,-7.5) {\footnotesize{$xy^7$}}
			    node at (0.5,-8.5) {\footnotesize{$y^8$}};			
	\draw[very thick] (9,0)--(9,-1)--(7,-1)--(7,-3)--(6,-3)--(6,-4)--(4,-4)--(4,-6)--(2,-6)--(2,-7)--(1,-7)--(1,-8)--(0,-8);
         \end{tikzpicture}
         \caption{Kicking off corners of the partition $P=(9,7^2,6,4^2,2,1)$ (Example \ref{972bex}).}\label{fig11}
         \end{center}
	\end{figure}		
\end{example}

\medskip
Let $P$ be a partition of diagonal lengths $T$. Suppose $P=(p_1, \ldots , p_m)$, with $p_1\geq p_2 \geq \cdots \geq p_m$. Let 
$P^{\prime}=(p_1^{\prime}, \ldots , p_m^{\prime})$ with $p_i^{\prime}=p_i-1$. Let $T^{\prime}$ be the Hilbert function associated to $P^{\prime}$. 
If $I$ is an element of the cell $\mathbb{V}(E_P)$ of $\G_T$, then $(I:x)$ is an element of the cell 
$\mathbb{V}(E_{P^{\prime}})$ of $\G_{T^{\prime}}$. 
In fact we have a morphism $\varphi : \mathbb{V}(E_P) \rightarrow \mathbb{V}(E_{P^{\prime}})$ defined by $I\mapsto (I:x)$ whose fiber is an affine 
space of dimension the number of difference-one hooks having their feet at $y^{m-1}$ (\cite[Proposition 2.6]{jy-1}).
\begin{lemma}\label{inductionlem}
Assume that $T=(1, \dots, d, t_d,0)$, and that $P$ is a partition having diagonal lengths $T$ and difference-one hook code $\mathfrak{Q}(P)=(h_1^{l_1}, \dots, h_n^{l_n})$. Set $s=d+1-t_d$. Suppose that $I$ is a generic ideal in the cell $\mathbb V(E_P)$ and let $\bar{I}=(I:x)$.

\begin{itemize}
\item[(a)] If $h_n=0$ then $\bar{I}\in \mathbb V(E_{\bar{P}})$ where $\bar{P}$ is the partition of diagonal lengths $\bar{T}=(1, \dots, d-1, t-1)$ and hook code $\mathfrak{Q}(\bar{P})=(h_1^{l_1}, \dots, h_n^{l_n-1}).$ Furthermore, in this case $\kappa(P)=\kappa(\bar{P})+1.$
\medskip

\item[(b)] If $h_n>0$ then $\bar{I}\in \mathbb V(E_{\bar{P}})$ where $\bar{P}$ is the partition of diagonal lengths $\bar{T}=(1, \dots, d-1, t)$ and  hook code $\mathfrak{Q}(\bar{P})=\big((h_1-1)^{l_1}, \dots, (h_n-1)^{l_n}\big).$ Furthermore, in this case 
$$\kappa(P)=\left\{\begin{array}{ll} \kappa(\bar{P}),&\mbox{ if }\kappa(\bar{P})\geq s \\ s,&\mbox{ if } \kappa(\bar{P})=s-1.\end{array}\right.$$

\end{itemize} 
\end{lemma}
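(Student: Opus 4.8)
The plan is to run the induction through the colon fibration $\varphi\colon \mathbb{V}(E_P)\to \mathbb{V}(E_{P'})$, $I\mapsto \bar I:=(I:x)$, where $P'$ is obtained from $P$ by deleting the first column; this is the trivial fibration cited just before the Lemma (\cite{jy-1}, and already used in the proof of Theorem \ref{projectionthm}). The argument then breaks into a purely combinatorial part, identifying $P'$ with the partition $\bar P$ described in (a)/(b), and an algebraic part computing $\kappa(P)$ from $\kappa(\bar P)$. For the combinatorial part I would first observe that, since the order of $T$ is $d$, the partition $P$ has at least $d$ rows, and $y^d\in C_P$ iff it has at least $d+1$ rows; tracking the definition of the hook code one checks that $h_n=0 \iff y^d\in C_P$ (case (a)) while $h_n>0\iff P$ has exactly $d$ rows, i.e. $y^d\notin C_P$ (case (b)). Deleting the first column shifts every monomial down one degree and destroys exactly the difference-one hooks whose leg lies in column $0$. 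Bookkeeping these column-$0$ hooks gives the stated diagonal lengths $\bar T$ and hook codes: in case (a) one zero part of $\mathfrak{Q}(P)$ disappears, in case (b) every part drops by one. In particular $\bar s:=\nu(\bar T)+1-\bar t$ equals $s$ in case (a) and $s-1$ in case (b).

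For the algebraic part I would use that, the socle degree being $d$, the generators of $I$ sit only in degrees $d$ and $d+1$, so
\[
\kappa(I)=s+\bigl((d+2)-\dim_\F R_1I_d\bigr),\qquad \kappa(\bar I)=\bar s+\bigl((d+1)-\dim_\F R_1\bar I_{d-1}\bigr),
\]
since $\dim_\F I_d=s$, $\dim_\F\bar I_{d-1}=\bar s$, $I_{d+1}=R_{d+1}$ and $\bar I_d=R_d$. The essential input is $x\bar I_{d-1}=I_d\cap xR_{d-1}\subseteq I_d$, a subspace of dimension $\bar s$. In case (a), $\bar s=s=\dim_\F I_d$ forces $I_d=x\bar I_{d-1}$, whence $R_1I_d=x\cdot R_1\bar I_{d-1}$ and $\dim_\F R_1I_d=\dim_\F R_1\bar I_{d-1}$; subtracting the two displayed formulas yields $\kappa(P)=\kappa(\bar P)+1$. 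In case (b), $\bar s=s-1$ gives $I_d=x\bar I_{d-1}\oplus\langle g\rangle$ with $g$ the (generic) degree-$d$ generator of leading monomial $y^d$; expanding $R_1I_d=xR_1\bar I_{d-1}+\langle xg,yg\rangle$ and checking that $\langle xg,yg\rangle\cap xR_1\bar I_{d-1}$ is one-dimensional precisely when $g\in R_1\bar I_{d-1}$ (and zero otherwise, using that $g$ is not divisible by $x$) gives the key identity $\dim_\F R_1I_d=\dim_\F R_1\bar I_{d-1}+2-[g\in R_1\bar I_{d-1}]$. Substituting, $\kappa(P)=\kappa(\bar P)+[g\in R_1\bar I_{d-1}]$, where $[\,\cdot\,]$ is $1$ or $0$.

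It thus remains to show that, for generic $I$ (so that $\bar I$ is generic in $\mathbb{V}(E_{\bar P})$ and $\kappa(\bar I)=\kappa(\bar P)$), one has $g\in R_1\bar I_{d-1}$ iff $\kappa(\bar P)=s-1$. One direction is immediate: $\kappa(\bar P)=\bar s=s-1$ means all generators of $\bar I$ lie in degree $d-1$, i.e. $R_1\bar I_{d-1}=R_d$, so trivially $g\in R_d=R_1\bar I_{d-1}$ and $\kappa(P)=s$. For the converse I would argue that when $\kappa(\bar P)\ge s$ (so $R_1\bar I_{d-1}\subsetneq R_d$) the generic $g$ avoids $R_1\bar I_{d-1}$: since $\kappa$ is upper-semicontinuous on the cell and $\kappa(I)\ge\kappa(\bar I)$ pointwise, it suffices to exhibit a single $I\in\mathbb{V}(E_P)$ over a generic $\bar I$ whose degree-$(d+1)$ corners are all kicked off, i.e. with $\kappa(I)=\kappa(\bar P)$. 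This ideal is produced by the standard-basis construction of Lemma \ref{corner-kick-off-lemma}, choosing the parameters $\lambda_i$ to realize maximal kick-off of the degree-$(d+1)$ corners against the consecutive degree-$d$ leading monomials supplied by $x\bar I_{d-1}$ together with the extra leading monomial $y^d$; Remark \ref{selected-corner-kick-off} lets me prescribe exactly which corners survive.

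The main obstacle is this last converse in case (b): verifying that the numerical hypotheses of the kick-off construction ($2m\le d'$ and the admissible ranges for the $\alpha_i,\beta_i$) are actually met by the corner configuration encoded in the hook code $\mathfrak{Q}(P)$, so that for generic $I$ the number of kicked-off degree-$(d+1)$ corners is exactly $b_1(E_P)-\kappa(P)$ and the new degree-$d$ generator $g$ does \emph{not} become a consequence of the generators of $\bar I$. This is precisely the step that forces the restriction on $\cha{\F}$, since it relies on the Brian\c{c}on--Galligo standard-basis result underlying Lemma \ref{corner-kick-off-lemma}. A secondary, more routine but fiddly point is the combinatorial identification of $\bar P$ in the first paragraph, where the transformation of $\mathfrak{Q}(P)$ under column deletion must be matched term-by-term against Definition \ref{hookcodedef} by counting the difference-one hooks whose leg occupies the deleted column.
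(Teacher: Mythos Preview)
Your dimension-counting framework is sound and your formula $\kappa(I)=\kappa(\bar I)+[g\in R_1\bar I_{d-1}]$ in case~(b) is a clean way to organise the argument. The combinatorial identification of $\bar P$, $\bar T$ and $\mathfrak{Q}(\bar P)$ is left essentially unverified in the paper as well, so you are on equal footing there.

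The real divergence is in how the converse step of case~(b) is handled. You propose to invoke the general kick-off Lemma~\ref{corner-kick-off-lemma} to build an $I$ over a generic $\bar I$ with $\kappa(I)=\kappa(\bar P)$, and you correctly flag the consecutive-monomial hypothesis $2m\le d'$ as an obstacle. The paper sidesteps this entirely with a one-line trick that actually fits neatly into \emph{your} framework: the hypothesis $\kappa(\bar P)>\bar s=s-1$ says precisely that a minimal generating set $\{f_1,\dots,f_\kappa\}$ of a generic $\bar I$ contains at least one element $f:=f_{\kappa-s+1}$ of degree~$d$ (not just degree $d-1$), and a minimal generator satisfies $f\notin R_1\bar I_{d-1}$ by definition. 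Now simply take $g=y f_\kappa+\lambda f$ with $\lambda\neq 0$: this has leading monomial $y^d$ (since $f_\kappa$ has leading monomial $y^{d-1}$), and since $yf_\kappa\in R_1\bar I_{d-1}$ while $f\notin R_1\bar I_{d-1}$, you get $g\notin R_1\bar I_{d-1}$. In your notation this gives $[g\in R_1\bar I_{d-1}]=0$, hence $\kappa(P)=\kappa(\bar P)$, and no appeal to Lemma~\ref{corner-kick-off-lemma} or its numerical hypotheses is needed. Equivalently, in the paper's language, the single relation $x\cdot g-y\cdot(xf_\kappa)=\lambda\, x f$ kicks off the redundant generator $xf$ from $\{xf_1,\dots,xf_\kappa,g\}$.

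So your route is not wrong, but the obstacle you identify is self-inflicted: replacing the appeal to Lemma~\ref{corner-kick-off-lemma} by the direct choice of $g$ above removes it completely. The paper also splits off the subcase $h_n=1$ separately (where $f_\kappa$ itself already has leading monomial $y^d$, so one just replaces $xf_\kappa$ by a generic $g$), which you may want to isolate as well, though your formula handles it uniformly.
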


\par

\begin{proof}

We note that the Ferrers diagram of $\bar{P}$ is always obtained from the Ferrers diagram of $P$ by removing the first column. Let $\mathcal{\bar{B}}=\{f_1, \dots, f_\kappa\}$ be a minimal set of generators for $\bar{I}$ (the $f_i$'s are ordered according to their leading monomials, from top to bottom in the Ferrers diagram).
\medskip

\noindent Part (a) If $h_n=0$, then $\mathcal{B}=\{xf_1, \dots, xf_\kappa, y^{d+1}\}$ is a minimal set of generators for $I$. Thus the equality in part (a) holds.
\medskip

\noindent Part (b). If $h_n>0$ then by definition $\bar{s}=s-1$. This in particular implies that in this case $\kappa(\bar{P})\geq s-1$. 

Assume that $h_n=1$. Then the leading term of 
$f_\kappa$ is $y^d$ (we can even set $f_\kappa=y^d$ here). Let $g$ be a generic polynomial with leading term $y^d$. Then 
$\mathcal{B}=\{xf_1, \dots, xf_{\kappa-1}, g\}$ is a minimal set of generators for $I$. Thus in this case $\kappa(P)=\kappa(\bar{P})$. We also note that since $\kappa(P)\geq s$ (Definition \ref{order_of_Tdef} and Lemma~\ref{kappaTlem})
the equality $\kappa(P)=\kappa(\bar{P})$ in particular implies that when $h_n=1$, we have $\kappa(\bar{P})\geq s$.

\medskip

Next,  assume that $h_n>1$. Suppose $\kappa(\bar{P})=s-1$ (this is the minimum value posssible for $\kappa(\bar{P})$). In this case, all the degree $d$ corner-monomials of $P^{\prime}$ have been kicked-off. After multiplication by $x$, these degree $d$ corner-monomials of $P^{\prime}$ become degree $d+1$ 
corner-monomials of $P$, so are kicked-off by $(xf_1, \dots, xf_\kappa)$ and therefore $\mathcal{B}=\{xf_1, \dots, xf_\kappa, g\}$, where $g$ is a generic polynomial with leading term $y^d$, is a minimal set of generators of the generic element of $\mathbb{V}(E_P)$, so 
$\kappa(P)=\kappa(\bar{P})+1$. \\
Now, suppose $\kappa(\bar{P})>s-1$. This means that there is at least one  degree-$d$ form in any minimal set of generators of a generic element of 
$\mathbb{V}(E_{P^{\prime}})$. So we have $\mathcal{\bar{B}}=\{f_1, \dots, f_{\kappa-s+1}, f_{\kappa-s+2}, \ldots, f_\kappa\}$, 
$\deg( f_{\kappa-s+1})=d$, $\deg( f_{\kappa-s+2})=\cdots = \deg(f_\kappa)=d-1$. Note that $f_\kappa$ has leading monomial $y^{d-1}$. Now, 
let $g=yf_\kappa+\lambda f_{\kappa-s+1}$ ($\lambda\neq 0$). Then $xg-y(xf_\kappa)=\lambda xf_{\kappa-s+1}$. Since $\lambda\neq 0$, this 
means that $xf_{\kappa-s+1}$ can be kicked off. If $\mathcal{\bar{B}}=\{f_1, \dots, f_{\kappa-s+1}, f_{\kappa-s+2}, \ldots, f_\kappa\}$ 
is a minimal set of generators of $(I:x)=\bar{I}$, then $\mathcal{B}=\{xf_1, \dots, xf_{\kappa-s}, xf_{\kappa-s+2}, \ldots, xf_\kappa,g\}$ 
is a minimal set of generators of $I$. Thus $\kappa(P)=\kappa(\bar{P})$.
\end{proof}

Recall that for a partition $P$ of diagonal lengths $T$, we denote the minimum number of generators for a generic element $I$ in the cell $\mathbb{V}(E_P)$ by $\kappa(P)$. Also note that, as discussed at the beginning of this section, if $T=(1, \dots, d, t_d,0)$ is a single-block Hilbert function, then a minimal system of generators for $I$ consists of $s=d+1-t_d$ generators of degree $d$ and $(\kappa(P)-s)$ generators of degree $d+1$. The following theorem provides an explicit formula for $\kappa(P)$ in the single-block case.

\begin{theorem}[The invariant $\kappa(P)$ for a single-block $T$]\label{kappathm}
Assume that $T=(1, \dots, d, t_d=t,0)$, set $s=d+1-t$ and let $P$ be a partition of diagonal lengths $T$ and difference-one hook code $\mathfrak{Q}(P)=(h_1^{l_1}, \dots, h_n^{l_n})$. For $k=1, \dots, n$, let $\tau_k=\sum\limits_{i=k}^nl_i-h_k.$ Then
\begin{equation}\label{kappaformula}\kappa(P)=s+\max\{t+1-s, 0, \tau_k\}_{k=1, \dots, n}.\end{equation}

\end{theorem}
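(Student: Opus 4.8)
The plan is to prove the formula by induction on the order $d$ of the single-block Hilbert function $T$, using Lemma \ref{inductionlem} to pass from $P$ to the partition $\bar P$ obtained by deleting the first column (equivalently, from a generic ideal $I\in\mathbb V(E_P)$ to $\bar I=(I:x)\in\mathbb V(E_{\bar P})$), which lowers $d$ by one. Throughout I would abbreviate $\theta(P)=\max\{t+1-s,\,0,\,\tau_1,\ldots,\tau_n\}$, so that the assertion is exactly $\kappa(P)=s+\theta(P)$. Since Lemma \ref{inductionlem} already records how $\kappa(P)$ is assembled from $\kappa(\bar P)$, the whole content of each inductive step is the purely combinatorial check that $\theta$ obeys the matching recursion under column deletion; the genuinely geometric input (the kick-off construction) has been isolated upstream in Lemmas \ref{corner-kick-off-lemma} and \ref{inductionlem}, which I may assume. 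This is cleaner than the alternative of directly computing $\kappa(P)=b_1(E_P)-(\#\text{ kicked-off corners})$ via Lemma \ref{countcornerlem}.

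In the case $h_n=0$ (Lemma \ref{inductionlem}(a)), the partition $\bar P$ has hook code $(h_1^{l_1},\ldots,h_n^{l_n-1})$ with $\bar t=t-1$ and $\bar s=s$, so for every $k$ one computes $\bar\tau_k=\bigl(\sum_{i=k}^n l_i-1\bigr)-h_k=\tau_k-1$ and $\bar t+1-\bar s=(t+1-s)-1$. Hence
\[
\theta(\bar P)+1=\max\{t+1-s,\,1,\,\tau_1,\ldots,\tau_n\}.
\]
The only discrepancy with $\theta(P)=\max\{t+1-s,0,\tau_1,\ldots,\tau_n\}$ is the entry $1$ versus $0$, and it is harmless precisely because $h_n=0$ forces $\tau_n=l_n\ge 1$, so the maximum is already $\ge 1$ and $\theta(P)=\theta(\bar P)+1$. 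Combined with $\kappa(P)=\kappa(\bar P)+1$ from Lemma \ref{inductionlem}(a) and the inductive hypothesis $\kappa(\bar P)=\bar s+\theta(\bar P)=s+\theta(\bar P)$, this gives $\kappa(P)=s+\theta(P)$. I would note the boundary subtlety that when $l_n=1$ the value $0$ disappears from the code of $\bar P$ (so $\bar n=n-1$); this changes nothing, as the dropped entry is $\bar\tau_n=\tau_n-1=0$, already dominated by the explicit $0$ in $\theta(\bar P)$.

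In the case $h_n>0$ (Lemma \ref{inductionlem}(b)), the partition $\bar P$ has hook code $((h_1-1)^{l_1},\ldots,(h_n-1)^{l_n})$ with $\bar t=t$ and $\bar s=s-1$, giving $\bar\tau_k=\tau_k+1$ for all $k$ and $\bar t+1-\bar s=(t+1-s)+1$, whence $\theta(\bar P)=\max\{(t+1-s)+1,0,\tau_1+1,\ldots,\tau_n+1\}$. The two branches of Lemma \ref{inductionlem}(b) can be repackaged as the single identity $\kappa(P)=\max\{\kappa(\bar P),s\}$, valid because $\kappa(\bar P)\ge\bar s=s-1$ always. Substituting the inductive hypothesis $\kappa(\bar P)=(s-1)+\theta(\bar P)$ yields
\[
\kappa(P)=\max\{(s-1)+\theta(\bar P),\,s\}=s+\max\{\theta(\bar P)-1,\,0\}.
\]
Since $\theta(\bar P)-1=\max\{t+1-s,-1,\tau_1,\ldots,\tau_n\}$, the outer maximum absorbs the $-1$ into the $0$ and gives $\max\{\theta(\bar P)-1,0\}=\theta(P)$, so again $\kappa(P)=s+\theta(P)$.

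It remains to treat the base of the induction, where column deletion either lowers $d$ to $1$ or drives the single-block data to a boundary (for instance $\bar t=0$ in the case $h_n=0$ with $t=1$); these I would check directly. For $T=(1,1,0)$ both partitions have $\kappa=2=s+1$, and the extreme partition $P_0=T$ with empty hook code $(0^t)$ has $\tau_1=t$, hence $\theta(P_0)=t$ and $\kappa(P_0)=s+t=d+1$, matching the termwise-maximal generator count of Lemma \ref{kappaTlem}(iii). I expect the main obstacle to be not a single hard estimate but the bookkeeping in the two steps above: one must track how the multiplicities $l_i$, the values $h_i$, and the count $n$ change when a part or a multiplicity drops to $0$, and verify each time that the maximum defining $\theta$ is unaffected because the altered entries are dominated. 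Once the recursion from Lemma \ref{inductionlem} is recast in the uniform forms $\kappa(P)=\kappa(\bar P)+1$ and $\kappa(P)=\max\{\kappa(\bar P),s\}$, the matching behaviour of $\theta$ is forced and the induction closes.
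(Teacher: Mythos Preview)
Your proposal is correct and follows essentially the same approach as the paper: induction on $d$ via Lemma \ref{inductionlem}, split into the cases $h_n=0$ and $h_n>0$, with the same computations of how $\bar t,\bar s,\bar\tau_k$ shift. Your repackaging of Lemma \ref{inductionlem}(b) as $\kappa(P)=\max\{\kappa(\bar P),s\}$ is a small streamlining over the paper's two-subcase treatment, and your explicit note on the $l_n=1$ boundary is a point the paper leaves implicit; conversely, the paper anchors the base at $d=2$ with a full enumeration, whereas your base discussion is only a sketch and should be filled in.
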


\begin{proof}
We prove the theorem by induction on $d$. 
\bigskip

First assume that $d=2$.
\medskip

If $t=2$, then $s=1$. In this case there are three partitions of diagonal lengths $T$, namely
\begin{itemize}
\item[(i)] Partition $P=(3,2)$ with hook code $\mathfrak{Q}(P)=(1^2)$, $\kappa(P)=3$ and $$s+\max\{t+1-s,0, \tau_1\}=1+\max\{2,0, 2-1\}=3;$$

\item[(ii)] Partition $P=(3,1^2)$ with hook code $\mathfrak{Q}(P)=(1,0)$, $\kappa(P)=3$ and $$s+\max\{t+1-s,0, \tau_1, \tau_2\}=1+\max\{2,0, 2-1, 1-0\}=3;$$

\item[(iii)] Partition $P=(2^2,1)$ with hook code $\mathfrak{Q}(P)=(0^2)$, $\kappa(P)=3$ and $$s+\max\{t+1-s,0, \tau_1\}=1+\max\{2,0,2-0\}=3.$$
\end{itemize}
\medskip

On the other hand, if $t=1$, then $s=2$. In this case, there are three partitions of diagonal lengths $T$, namely

\begin{itemize}
\item[(i)] Partition $P=(3,1)$ with hook code $\mathfrak{Q}(P)=(2)$, $\kappa(P)=2$ and $$s+\max\{t+1-s,s, s+\tau_1\}=2+\max\{0,0,1-2\}=2;$$

\item[(ii)] Partition $P=(2^2)$ with hook code $\mathfrak{Q}(P)=(1)$, $\kappa(P)=3$ and $$s+\max\{t+1-s,0, \tau_1\}=2+\max\{0,0,1-1\}=2;$$

\item[(iii)] Partition $P=(2,1^2)$ with hook code $\mathfrak{Q}(P)=(0)$, $\kappa(P)=3$ and $$s+\max\{t+1-s,0, \tau_1\}=2+\max\{0,0,1-0\}=3.$$
\end{itemize}

This shows that the desired equality holds when $d=2$.
\bigskip

Now assume that $d>2$ and that Equation (\ref{kappaformula}) holds for any partition of diagonal lengths $(1, \dots, d',t,0)$ with $d'<d$. 

Suppose that $P$ is a partition of diagonal lengths $T=(1, \dots, d, t)$ and hook code  $\mathfrak{Q}(P)=(h_1^{l_1}, \dots, h_n^{l_n})$. Let $\bar{P}$ be the partition associated to $P$ defined in Lemma \ref{inductionlem}. Then by the inductive hypothesis Equation \ref{kappaformula} holds for $\kappa(\bar{P})$.

\medskip

{\bf Case 1.} Assume that $h_n=0$. Then $\bar{t}=t-1$, $\bar{s}=s$, and for $k=1, \dots, n$, we have $\bar{\tau}_k=\tau_k-1$. Thus
$$\begin{array}{ll}
\kappa(\bar{P})&=\bar{s}+\max\{\bar{t}+1-\bar{s}, 0, \bar{\tau}_k\}_{k=1, \dots, n}\\
&=s+\max\{t-s,0, \tau_k-1\}_{k=1, \dots, n}.
\end{array}$$

Since $\tau_n=l_n-h_n=l_n\geq1$, we have 

$$\begin{array}{ll}
\max\{t-s,0, \tau_k-1\}_{k=1, \dots, n}&=\max\{t-s,\tau_k-1\}_{k=1, \dots, n}\\ \\
&=\max\{t+1-s, \tau_k\}_{k=1, \dots, n}-1.
\end{array}$$

Thus using part (a) of Lemma \ref{inductionlem} we have

$$\begin{array}{ll}
\kappa(P)&=\kappa(\bar{P})+1\\
&s+\max\{t+1-s, \tau_k\}_{k=1, \dots, n}\\
&s+\max\{t+1-s,0, \tau_k\}_{k=1, \dots, n}.
\end{array}$$

\medskip

{\bf Case 2.} Assume that $h_n>0$. Then $\bar{t}=t$, $\bar{s}=s-1$, and for $k=1, \dots, n$, we have $\bar{\tau}_k=\tau_k+1$. By the inductive hypothesis

$$\begin{array}{ll}
\kappa(\bar{P})&=\bar{s}+\max\{\bar{t}+1-\bar{s}, 0, \bar{\tau}_k\}_{k=1, \dots, n}\\
&=s-1+\max\{t+1-s+1,0, \tau_k+1\}_{k=1, \dots, n}.
\end{array}$$

If $h_n=1$, then $\tau_n=l_n-1\geq 0$. Furthermore, if $\kappa(\bar{P})\geq s$ then $t+1\geq s$ or $\tau_k\geq 0$ for some $k$. In either of these cases, we have 
 $$
\max\{t+1-s+1,0, \tau_k+1\}_{k=1, \dots, n}=\max\{t+1-s+1, \tau_k+1\}_{k=1, \dots, n}.$$ Thus, using Lemma \ref{inductionlem}, we have 

$$\begin{array}{ll}
\kappa(P)&=\kappa(\bar{P})\\
&=s-1+\max\{t+1-s+1, \tau_k+1\}_{k=1, \dots, n}\\
&=s+\max\{t+1-s, \tau_k\}_{k=1, \dots, n}\\
&=s+\max\{t+1-s,0, \tau_k\}_{k=1, \dots, n}.
\end{array}$$

Finally, if $\kappa(\bar{P})=s-1$, then $t+1\leq s-1$ and $\tau_k+1\leq 0$, for all $k=1, \dots, n$. This in particular implies that in this case $s+\max\{t+1-s,0, \tau_k\}_{k=1, \dots, n}=s$.

By Lemma \ref{inductionlem}, we also have 

$$\begin{array}{ll}
\kappa(P)&=\kappa(\bar{P})+1\\
&=s-1+1\\
&=s
\end{array}$$
\end{proof}\par
Recall that a partition in $\mathcal P(T)$ is \emph{special} if $\kappa(P)>\kappa (T)$ from Equation~\eqref{kappaTeq}.
\begin{corollary}[Special partitions]\label{special1cor}
Assume that $P$ is a single-block partition.  Then $P$ is special if and only if
some $\tau_k$ from Theorem \ref{kappathm} satisfies $\tau_k>\delta$ where $\delta=\max\{t_d+1-s,0\}$.
\end{corollary}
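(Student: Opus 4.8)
The plan is to read the result off directly from the formula for $\kappa(P)$ in Theorem \ref{kappathm} by comparing it against the value $\kappa(T)=s+\delta$ recorded in Equation \eqref{kappaTsingleeq}. Since $\delta=\max\{t+1-s,0\}$ is precisely the combination of the first two entries appearing inside the maximum that defines $\kappa(P)$, the first step is to rewrite
\[
\kappa(P)=s+\max\{t+1-s,\,0,\,\tau_1,\ldots,\tau_n\}=s+\max\{\delta,\,\tau_1,\ldots,\tau_n\}.
\]

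Next I would subtract the two expressions to obtain
\[
\kappa(P)-\kappa(T)=\max\{\delta,\tau_1,\ldots,\tau_n\}-\delta=\max\{0,\,\tau_1-\delta,\ldots,\tau_n-\delta\},
\]
which is strictly positive if and only if $\tau_k-\delta>0$ for at least one index $k$, that is, if and only if some $\tau_k>\delta$. This is exactly the asserted condition.

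Finally, to turn this into the statement about specialness I would invoke the fact that $\kappa(T)$ is by definition the minimum number of generators over all of $\G_T$ (Definition \ref{kappadef}, Lemma \ref{kappaTlem}), so that $\kappa(P)\ge \kappa(T)$ for every $P\in\mathcal P(T)$. Hence the defining inequality $\kappa(P)\neq\kappa(T)$ for a special partition is equivalent to $\kappa(P)>\kappa(T)$, and the display above shows this holds precisely when some $\tau_k>\delta$. There is no genuine obstacle here: the only points requiring a line of care are verifying that the two terms $t+1-s$ and $0$ really do collapse to $\delta$ inside the maximum, and recording the monotonicity $\kappa(P)\ge\kappa(T)$ that lets us replace $\neq$ by $>$ in the definition of special.
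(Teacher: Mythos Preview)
Your proof is correct and follows exactly the approach the paper intends: the paper's own proof is the single line ``This follows from Equation \eqref{kappaTeq} and Theorem \ref{kappathm},'' and you have simply written out the comparison of $\kappa(P)=s+\max\{\delta,\tau_1,\ldots,\tau_n\}$ with $\kappa(T)=s+\delta$ in detail.
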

\begin{proof} This follows from Equation \ref{kappaTeq} and Theorem \ref{kappathm}.
\end{proof}
\begin{remark}
We note that if at least one entry in the hook code of $P$ is zero, then $\tau_n=l_n>0$. Thus, in this case $\kappa(P)\geq s+1$. This in particular implies that in part (b) of Lemma \ref{inductionlem}, if $h_n=1$ then the hook code of $\bar{P}$ has a zero entry and therefore $\kappa(\bar{P})\geq \bar{s}+1=s$. 

\end{remark}

\subsection{Lattice path correspondence.}\label{latticecorrsec}
In this subsection we introduce a one-to-one correspondence between single-block partitions with a given Hilbert function $T=(1, 2, \ldots, d-t_d=t,0)$ and the north-east lattice paths from $(0,0)$ to $(s,t)$. This correspondence will in particular provide a straightforward geometric illustration of the statement of 
Theorem~\ref{kappathm}.

\begin{definition}\label{codetopathcorrdefn}
Let $P$ be a single-block partition of diagonal lengths $T=(1, \dots, d, t_d=t,0)$ and difference-one hook code $\mathfrak{Q}(P)=(h_1^{l_1}, \dots, h_n^{l_n})$. Recall that $s=d+1-t$. Let $\mathfrak{L}(P)$ be the NE lattice path form $(0,0)$ to $(s,t)$ represented by the word $$\mathfrak{L}(P)=E^{h_n}N^{l_n}E^{h_{n-1}-h_n}N^{l_{n-1}}\dots E^{h_1-h_2}N^{l_1}E^{s-h_1}.$$ 

In other words, to obtain $\mathfrak{L}(P)$ from $\mathfrak{Q}(P)$, we start at the origin in $\mathbb{Z}^2$, move to the right by $h_n$ steps, then move up by $l_n$ steps, then move to the right by $h_{n-1}-h_n$ steps and up by $l_{n-1}$ steps, etc. Note that we may have $h_n=0$ or $h_1=s$, and therefore the path may start or end with northward steps.
\end{definition}
\begin{lemma}
Let $T=(1, \dots, d, t_d=t,0)$ and $s=d+1-t$. The map sending a partition $P$ of diagonal lengths $T$ to the NE lattice path $\mathfrak{L}(P)$ defined in Definition~\ref{codetopathcorrdefn} is a 1-1 correspondence between the set of partitions of diagonal lengths $T$  and the set of NE lattice paths from $(0,0)$ to $(s,t)$.
\end{lemma}
\begin{proof}
By Theorem~\ref{PTtoQTthm}, the map sending each partition $P$ of diagonal lengths $T$ to its difference-one hook code $\mathfrak{Q}(P)$ is an isomorphism. Moreover, the following map from the set of NE lattice paths from $(0,0)$ to $(s,t)$ to the set of difference-one hook codes for partitions of diagonal lengths $T$ is the inverse of the map defined in Definition~\ref{codetopathcorrdefn}. 
Consider a NE lattice path from $(0,0)$ to $(s,t)$ given by a word $L=E^{e_r}N^{n_r}\ldots E^{e_1}N^{n_1}E^{e_0}$ where $e_r$ and $e_0$ are non-negative while the rest of $e_i$ and $n_i$'s are positive. Then the corresponding partition $P$ is the partition with diagonal lengths $T=(1, 2, \dots, s+t-1, t,0)$ and difference-one hook code 
$$\mathfrak{Q}(P)=\left(\left(\sum_{i=1}^re_i\right)^{n_1}, \dots, \left(\sum_{i=k}^re_i\right)^{n_k}, \dots, e_r^{n_r}\right).$$
\end{proof}

\begin{example}\label{NEpathcorrexample}
Consider the single-block partition $P=(5,4,2^2)$ of diagonal lengths $T=(1,2,3,4,3,0)$. Then $\mathfrak{Q}(P)=(2^2,1)$. Therefore $\mathfrak{L}(P)$ is the path corresponding to the word $ENENN$. Note that the first horizontal step corresponds to the entry 1 in the hook-code. It is then followed by one step up, corresponding to the multiplicity of the entry 1 in $\mathfrak{Q}(P)$. Then there is another step to the right which corresponds to the difference $2-1$ of the consecutive entries in $\mathfrak Q(P)$, followed by two steps up because of the multiplicity 2 of the entry 2 in $\mathfrak{Q}(P)$. See Figure~\ref{NEpathcorrfig} for a visualization of this, as well as similar path correspondences for two other partitions of the same diagonal lengths. 
\end{example}
\begin{figure}
\begin{center}
\begin{tikzpicture}[scale=.7]

    \draw[black]
    node at (2,0) {\underline{\underline{Partition $P$}}}
    node at (15,0) {\underline{\underline{NE lattice path}}};

    \draw[very thick] (0,-2)--(5,-2)--(5,-3)--(4,-3)--(4,-4)--(2,-4)--(2,-6)--(0,-6)--(0,-2);
    
	\draw[very thin] 	(0,-2)--(5,-2)
						(0,-3)--(4,-3)
						(0,-4)--(2,-4)
						(0,-5)--(2,-5)
						(1,-2)--(1,-6)
						(2,-2)--(2,-6)
						(3,-2)--(3,-4)
						(4,-2)--(4,-3);

	\draw[black]
	node at (0.5,-2.5) {$\bullet$}
	node at (2.5,-2.5) {$\bullet$}
	node at (0.5,-3.5) {$\bullet$}
	node at (2.5,-3.5) {$\bullet$}
	node at (0.5,-5.5) {$\bullet$};
	
    \draw[very thick, black, <->]
		(6,-4)--(11,-4);

    \draw[ultra thin]
    (14,-5)--(16,-5)--(16,-2)--(14,-2)--(14,-5);
    
    \draw[ultra thick] 
    (14,-5)--(15,-5)--(15,-4)--(16,-4)--(16,-2);
    
    \draw[black]
    node at (14,-5) {$\bullet$}
    node at (13.25,-5) {(0,0)}
    node at (15,-5) {$\bullet$}
    node at (15,-4) {$\bullet$}
    node at (16,-4) {$\bullet$}
    node at (16,-2) {$\bullet$}
    node at (16.75,-2) {(2,3)};

    \draw[thick, red]
    (13,-4)--(16,-1);
    \draw[red]
     node at (17.5, -1) {$y=x+\delta$};

    \draw[black]
    node at (2,-7) {$\mathfrak{Q}(P)=(2^2,1)$}
    node at (8.5,-7.5) {$\boxed{\kappa(P)=s+\delta=4}$}
    node at (15,-7) {$\mathfrak{L}(P)=ENENN$};


	\draw[very thick] (0,-11)--(4,-11)--(4,-13)--(2,-13)--(2,-15)--(1,-15)--(1,-16)--(0,-16)--(0,-11);
	
	\draw[very thin] 	(0,-12)--(4,-12)
						(0,-13)--(4,-13)
						(0,-14)--(2,-14)
						(0,-15)--(2,-15)
						(1,-11)--(1,-15)
						(2,-11)--(2,-13)
						(3,-11)--(3,-13);

	\draw[black]
	node at (2.5,-12.5) {$\bullet$}
	node at (0.5,-2.5) {$\bullet$}
	node at (2.5,-2.5) {$\bullet$}
	node at (0.5,-4.5) {$\bullet$};
	
    \draw[very thick, black, <->]
		(6,-14)--(11,-14);

    \draw[ultra thin]
    (14,-15)--(16,-15)--(16,-12)--(14,-12)--(14,-15);
    
    \draw[ultra thick] 
    (14,-15)--(14,-13)--(15,-13)--(15,-12)--(16,-12);
    
    \draw[black]
    node at (14,-15) {$\bullet$}
    node at (13.25,-15) {(0,0)}
    node at (14,-13) {$\bullet$}
    node at (15,-13) {$\bullet$}
    node at (15,-12) {$\bullet$}
    node at (16,-12) {$\bullet$}
    node at (16.75,-12) {(2,3)};
    
    \draw[thick, red]
    (13,-14)--(16,-11);
    \draw[red]
     node at (17.5, -11) {$y=x+\delta$};

    \draw[black]
    node at (2,-17) {$\mathfrak{Q}(P)=(1,0^2)$}
       node at (8.5,-17.5) {$\boxed{\kappa(P)=s+\delta=4}$}
    node at (15,-17) {$\mathfrak{L}(P)=NNENE$};  
    

    \draw[very thick] (0,-21)--(4,-21)--(4,-22)--(3,-22)--(3,-24)--(2,-24)--(2,-25)--(1,-25)--(1,-26)--(0,-26)--(0,-21);
    
    \draw[very thin] 	(0,-22)--(4,-22)
						(0,-23)--(3,-23)
						(0,-24)--(2,-24)
						(0,-25)--(2,-25)
						(1,-21)--(1,-25)
						(2,-21)--(2,-24)
						(3,-21)--(3,-23);
						
	\draw[very thick, black, <->]
		(6,-23)--(11,-23);						

    \draw[ultra thin]
    (14,-25)--(16,-25)--(16,-22)--(14,-22)--(14,-25);
    
    \draw[ultra thick] 
    (14,-25)--(14,-22)--(16,-22);
    
    \draw[black]
    node at (14,-25) {$\bullet$}
    node at (13.25,-25) {(0,0)}
    node at (14,-22) {$\bullet$}
    node at (16,-22) {$\bullet$}
    node at (16.75,-22) {(2,3)};
    
    \draw[thick, , dashed, red]
    (13,-24)--(16,-21);
    \draw[red]
     node at (17.5, -21) {$y=x+\delta$};
    
    \draw[thick, red]
    (13,-23)--(15,-21);
    \draw[red]
     node at (13.5, -21) {$y=x+3$};
   
    \draw[black]
    node at (2,-27) {$\mathfrak{Q}(P)=(0^3)$}
       node at (8.5,-27.5) {$\boxed{\kappa(P)=s+3=5}$}
    node at (15,-27) {$\mathfrak{L}(P)=NNNEE$};

\end{tikzpicture}
\end{center}
\caption{NE lattice path correspondence for three different partitions of diagonal lengths $T=(1,2,3,4,3,0)$. See Example~\ref{NEpathcorrexample}.}\label{NEpathcorrfig}
\end{figure}

The key observation about the lattice path correspondence defined above is that $\max\{\tau_k\,|\, k=1, \ldots, n\}$ is in fact the maximum value $b$ for which the line $y=x+b$ intersects $\mathfrak{L}(P)$, excluding the end points.

Therefore, in order to find $\kappa(P)$ for a partition $P$, we consider the corresponding lattice path $\mathfrak{L}(P)$. If $\mathfrak{L}(P)$ does not cross the line $y=x+\delta$, where $\delta=\max\{0,t+1-s\}$, then $\kappa(P)=s+\delta$. Otherwise,
$\kappa(P)=s+b$, where $b$ is the largest integer such that $\mathfrak{L}(P)$ intersects the line $y=x+b$. See Figure~\ref{NEpathcorrfig}.

\begin{remark}\label{kappafrompathalg}
In general, the following steps lead to a visual and relatively straightforward way of finding $\kappa(P)$ for a single-block partition $P$ of diagonal lengths $T=(1, \dots, d, t_d=t,0)$ and difference-one hook code $\mathfrak{Q}(P)=(h_1^{l_1}, \dots, h_n^{l_n})$. See Figure~\ref{kappafrompathgeneral}.
\begin{itemize}
    \item Consider the NE lattice path $\mathfrak{L}(P)$ associated with the hook code $\mathfrak{Q}(P)$. Note that this is a path form $(0,0)$  to $(s,t)$ where the eastward movements are determined by entries of the hook code ($h_k$'s) while the northward steps are determined by the multiplicities of the entries of the hook code ($l_k$'s). The corners of the path are at $\left(h_k, \sum\limits_{i=k}^n l_i\right)$ for $k=1, \ldots, n$.
    \item Consider the set $L$ of all lines of slope 1 that intersect $\mathfrak{L}(P)$. If $y=x+\delta$ is not in the set, add it to $L$. Intersect all lines in $L$ with the vertical line $x=s$ and consider the most northerly intersection point. The $y$-coordinate of this point is $\kappa(P)$. 
\end{itemize}
\end{remark} 

 \begin{figure}
 \hspace{-0.7 in}
\begin{tikzpicture}[scale=.7, transform shape]

    \draw[black]
    node at (1,1) {\Large{\underline{{\bf Case 1}}. Assume that $s\leq t$. Then $\delta=t+1-s>0$.}};
    
    \draw[ultra thin]
    (0,-5)--(2,-5)--(2,-2)--(0,-2)--(0,-5);
    
    \draw[ultra thin, dashed]
    (2,-2)--(2,0);
    
    \draw[ultra thick] 
    (0,-5)--(1,-5)--(1,-4)--(2,-4)--(2,-2);
    
    \draw[black]
    node at (0,-5) {$\bullet$}
    node at (-0.85,-5) {(0,0)}
    node at (2,-2) {$\bullet$}
    node at (2.75,-2) {$(s,t)$};

    \draw[thick, red, dashed]
    (-0.5,-5.5)--(2.5,-2.5)
    (-.5,-4.5)--(2.5,-1.5);
     \draw[thick, red]
    (-1,-4)--(2.5,-0.5);
    \draw[red]
     node at (-2, -3.5) {$y=x+\delta$};

    \draw[black]
    node at (2,-1) {$\bullet$};
    
    \draw[black]
    node at (3.5,-1) {$(s,\kappa(P))$};
    
    \draw[black]
    node at (1,-8) {\Large{$\kappa(P)=s+\delta$}};
    \draw[black]
    node at (1,-9) {\Large{(Here $s=2$, $t=3$ and $\delta=2.$)}};

    \draw[ultra thin]
    (14,-5)--(16,-5)--(16,-2)--(14,-2)--(14,-5);
    
    \draw[ultra thin, dashed]
    (16,-2)--(16,0);
    
    \draw[ultra thick] 
    (14,-5)--(14,-2.5)--(14.5,-2.5)--(14.5,-2)--(16,-2);

    \draw[black]
    node at (14,-5) {$\bullet$}
    node at (13.125,-5) {(0,0)}
    node at (16,-2) {$\bullet$}
    node at (16.75,-2.25) {$(s,t)$};

    \draw[thick, red, dashed]
    (13.5, -4)--(16.5,-1);
    \draw[red]
     node at (12.5, -4.125) {$y=x+\delta$};
     
    \draw[red]
     node at (12.5, -2.75) {$y=x+b$};     

    \draw[thick, red]
    (13.25,-3.25)--(16.5,0);

    \draw[black]
    node at (16,-0.5) {$\bullet$};
    
    \draw[black]
    node at (17.5,-0.25) {$(s,\kappa(P))$};
    
    \draw[black]
    node at (15,-8) {\Large{$\kappa(P)=s+b$}};

    \draw[black]
    node at (15,-9) {\Large{(Here $s=4$, $t=6$, $\delta=3$ and $b=5$)}};

    \draw[black]
    node at (0,-14) {\Large{\underline{{\bf Case 2}}. Assume that $s>t$. Then $\delta=0$.}};
    
    \draw[ultra thin]
    (0,-20)--(3,-20)--(3,-18)--(0,-18)--(0,-20);
    
    \draw[ultra thick] 
    (0,-20)--(2.5,-20)--(2.5,-19.5)--(3,-19.5)--(3,-18);
    
    \draw[black]
    node at (0,-20) {$\bullet$}
    node at (-0.85,-20) {(0,0)}
    node at (3,-18) {$\bullet$}
    node at (4,-18) {$(s,t)$};

    \draw[ultra thin, dashed]
    (3,-16.5)--(3,-18);
    
    \draw[black]
    node at (3, -17){$\bullet$}
    node at (4.5,-17) {$(s,\kappa(P))$};
    
    \draw[thick,red]
    (-0.5,-20.5)--(3.5,-16.5);
    \draw[red]
     node at (1, -17.5) {$y=x+\delta$};
    
    \draw[thick,red, dashed]
    (0.5,-20.5)--(3.5,-17.5)
    (1.5,-20.5)--(3.5,-18.5);
    
    \draw[black]
    node at (1,-22) {\Large{$\kappa(P)=s+\delta=s$}};
    \draw[black]
    node at (1,-23) {\Large{(Here $s=6$ and $t=4$.)}};

    \draw[ultra thin]
    (14,-20)--(17,-20)--(17,-18)--(14,-18)--(14,-20);
    
    \draw[ultra thick] 
    (14,-20)--(14.5,-20)--(14.5,-18.5)--(15.5,-18.5)--(15.5,-18.25)--(17,-18.25)--(17,-18);
    
    \draw[black]
    node at (14,-20) {$\bullet$}
    node at (14,-20.5) {(0,0)}
    node at (17,-18) {$\bullet$}
    node at (18,-18) {$(s,t)$};

    \draw[thick, red, dashed]
    (13.25,-20.625)--(17.5,-16.5)
    ;
    \draw[red]
     node at (12.25, -20.5) {$y=x+\delta$};
     \draw[red]
     node at (13, -19) {$y=x+b$};
    \draw[thick, red]
    (13.5,-19.5)--(17.5,-15.5);

    \draw[black]
    node at (15,-22) {\Large{$\kappa(P)=s+b$}};
    \draw[black]
    node at (15,-23) {\Large{(Here $s=12$, $t=8$ and $b=4$.)}};
    
    \draw[ultra thin, dashed]
    (17,-15)--(17,-18);
    
    \draw[black]
    node at (17, -16){$\bullet$}
    node at (18.5,-16) {$(s,\kappa(P))$};
    
    \draw[thick, black]
    (-6,-24)--(-6,2)--(20,2)--(20,-24)--(-6,-24);
\end{tikzpicture}

\caption{Finding $\kappa(P)$ from $\mathfrak{L}(P)$. See Remark~\ref{kappafrompathalg}.}\label{kappafrompathgeneral}
\end{figure}

\section{Partitions in $\mathcal P(T)$ having a given number of generators, for single-block $T$.}\label{countSingleSection}
We begin with a result counting the total number of partitions having diagonal lengths a single-block Hilbert function $T$. We then in Theorem \ref{countingthm} count those associated to a given generic number of generators $\kappa(P)$. Throughout the section $T$ will be a single-block Hilbert function $T=(1,2,\ldots, d,t_d=t,0)$ of Equation~\eqref{Tsingleeqn}, we let $s=d+1-t$ and we set $\delta=\max\{t+1-s,0\}=\max\{2t-d,0\}$. 

\begin{lemma}\label{count1blockcor}
The number of partitions having the single-block diagonal lengths $T=(1,2,\ldots, d, t_d=t)$ satisfies
\begin{equation}\label{count1blockeq}
\# \mathcal P(T)=\binom{s+t}{t}.
\end{equation}
\end{lemma}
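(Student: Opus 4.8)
The plan is to reduce the count to the hook-code description of $\mathcal P(T)$ and then invoke the classical formula for the number of partitions fitting inside a rectangle. First I would observe that for a single-block Hilbert function $T=(1,2,\ldots,d,t,0)$ the socle degree is ${\sf j}=d$, so by Definition \ref{hookcodedef} the hook code of any $P\in\mathcal P(T)$ reduces to a single partition $\mathfrak h_d(P)\subset\mathfrak B_d(T)$, and $\mathcal Q(T)$ is simply the set of all partitions contained in the one box $\mathfrak B_d(T)$.

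Next I would pin down the shape of that box from the first-difference data. Since $t_{d-1}=d$, $t_d=t$ and $t_{d+1}=0$, we have $\delta_d=t_{d-1}-t_d=d-t=s-1$ and $\delta_{d+1}=t_d-t_{d+1}=t$. By Equation \eqref{BTeq}, $\mathfrak B_d(T)=(\delta_{d+1})\times(1+\delta_d)=t\times s$, a rectangle with $t$ rows each of length $s$. Hence $\mathcal Q(T)$ is precisely the set of partitions with at most $t$ parts, each part at most $s$.

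Then I would apply Theorem \ref{PTtoQTthm}: the hook-code map $\mathfrak q\colon\mathcal P(T)\to\mathcal Q(T)$ is a bijection of sets, so $\#\mathcal P(T)=\#\mathcal Q(T)$ equals the number of partitions inside the $t\times s$ box. Finally I would recall the classical enumeration: the partitions fitting inside an $a\times b$ rectangle are counted by $\binom{a+b}{a}$ (for instance via the standard bijection with monotone lattice paths of $a+b$ unit steps, $a$ of them horizontal, tracing the boundary of the Young diagram inside the box). Taking $a=t$ and $b=s$ yields $\#\mathcal P(T)=\binom{s+t}{t}$, as claimed.

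This argument is essentially a bookkeeping of the hook-code box combined with a well-known lattice-path count, so there is no genuine obstacle. The only point demanding care is the correct identification of the box dimensions $\mathfrak B_d(T)=t\times s$ from the $\delta_i$; interchanging the roles of $t$ and $s$ would still produce the same binomial coefficient by the symmetry $\binom{s+t}{t}=\binom{s+t}{s}$, but it would misdescribe the underlying bijection, so I would state the dimensions explicitly to keep the identification honest.
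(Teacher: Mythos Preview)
Your proof is correct and follows essentially the same approach as the paper: both invoke Theorem~\ref{PTtoQTthm} to identify $\mathcal P(T)$ with the partitions in the box $\mathfrak B_d(T)=t\times s$, and then count these via the lattice-path bijection to obtain $\binom{s+t}{t}$. You have merely spelled out the computation of $\delta_d=s-1$ and $\delta_{d+1}=t$ in more detail than the paper does.
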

\begin{proof} By Theorem \ref{PTtoQTthm}, $\# \mathcal P(T)$ counts the total number of partitions whose Ferrers diagram 
can be placed in a $t\times s$ box $\mathfrak B_d(T)$ or, equivalently, the number of lattice paths from $(0,0)$ to $(s,t)$, which satisfies \eqref{count1blockeq}. 
\end{proof}

\par
By Theorem \ref{kappathm}, the number of generators for a generic ideal in the cell $\mathbb V(E_P)$ is $\kappa(P)=s+\max\{\delta, \tau_k\}_{k=1, \dots, n}.$
In particular, for all partitions $P$ of diagonal lengths $T$, we have $$\kappa(T)=s+\delta\leq \kappa(P) \leq s+t.$$

\begin{theorem}[Number of special partitions of diagonal lengths $T$]\label{countingthm}

Let $T=(1, \dots, d, t,0)$, $s=d+1-t$, and $\delta=\max\{t+1-s,0\}$. Assume that $k$ is an integer such that $s+\delta< k \leq s+t$. Then the number of partitions $P$ of diagonal lengths $T$ and $\kappa(P)\geq k$ is 
\begin{equation}\label{countsinglegekeq}
\binom{s+t}{k}.
\end{equation}
In particular the number of special partitions of diagonal lengths $T$ is 
\begin{equation}\label{numberspecial1eq}
\binom{s+t}{s+\delta+1}=\binom{s+t}{\min\{s-1,t\}}.
\end{equation}
And the number of non-special partitions of diagonal lengths $T$ is
\begin{equation}\label{numbernonspecial1eq}
\binom{s+t}{s}-\binom{s+t}{s+\delta+1}.
\end{equation}
\end{theorem}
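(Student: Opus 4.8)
The plan is to turn the statement into a single lattice-path count controlled by the reflection principle, feeding off the closed formula for $\kappa(P)$ in Theorem~\ref{kappathm}. Recall from \eqref{kappaTsingleeq} that $\kappa(T)=s+\delta$ with $\delta=\max\{t+1-s,0\}$, and that Theorem~\ref{kappathm} gives $\kappa(P)=s+\max\{\delta,\tau_1,\dots,\tau_n\}$ for $P$ with hook code $(h_1^{l_1},\dots,h_n^{l_n})$. Hence for an integer $k$ with $s+\delta<k\le s+t$, setting $m=k-s$ (so $m>\delta\ge 0$), the condition $\kappa(P)\ge k$ is equivalent to $\max\{\tau_1,\dots,\tau_n\}\ge m$, because the terms $\delta$ and $0$ inside the maximum are strictly below $m$. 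So \eqref{countsinglegekeq} amounts to counting the $P\in\mathcal P(T)$ with $\max_j\tau_j\ge m$.

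Next I would reinterpret $\max_j\tau_j$ combinatorially. By Theorem~\ref{PTtoQTthm} and Lemma~\ref{count1blockcor}, $P$ is recorded by its single-block hook code $\lambda=\mathfrak h_d(P)$, a partition in the $t\times s$ box, which I pad to exactly $t$ parts $\lambda_1\ge\cdots\ge\lambda_t\ge 0$ (this is the bookkeeping convention under which $\tau_n=l_n$ when $h_n=0$). For $v\in\{0,\dots,s\}$ put $c(v)=\#\{j:\lambda_j\le v\}$. Since $\sum_{i\ge j}l_i=c(h_j)$ one has $\tau_j=c(h_j)-h_j$; and because $c$ is a nondecreasing step function that jumps only at part-values while $v\mapsto c(v)-v$ decreases on each flat, the maximum of $c(v)-v$ over $v\in\{0,\dots,s\}$ is attained at a part-value. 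Thus $\max_j\tau_j=\max_{0\le v\le s}\bigl(c(v)-v\bigr)$.

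I would then pass to lattice paths and reflect. Encode $\lambda$ by the monotone path $\Pi_\lambda$ from $(0,0)$ to $(s,t)$ with $s$ east-steps and $t$ north-steps whose point at horizontal coordinate $v$ sits at height $c(v)$; this is exactly the standard bijection behind $\#\mathcal P(T)=\binom{s+t}{s}$. By construction $\max_{v}\bigl(c(v)-v\bigr)$ equals the largest value of (height $-$ horizontal coordinate) along $\Pi_\lambda$, so $\kappa(P)\ge k$ holds precisely when $\Pi_\lambda$ meets the line $y=x+m$. Since the quantity $y-x$ starts at $0<m$ and changes by $\pm1$ per step, meeting the line is the same as reaching a point exactly on it, so the reflection principle applies verbatim: reflecting the start $(0,0)$ across $y=x+m$ to $(-m,m)$, the number of such paths is $\binom{(s+m)+(t-m)}{s+m}=\binom{s+t}{s+m}=\binom{s+t}{k}$. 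This establishes \eqref{countsinglegekeq}. For the remaining formulas, the inequality $\kappa(P)=s+\max\{\delta,\tau_j\}\ge s+\delta=\kappa(T)$ shows that $P$ is special iff $\kappa(P)\ge s+\delta+1$; applying the count just proved with $k=s+\delta+1$ yields $\binom{s+t}{s+\delta+1}$ special partitions, and a short case analysis on whether $\delta=t+1-s$ or $\delta=0$ rewrites this coefficient into the closed form recorded in \eqref{numberspecial1eq}. Subtracting from $\#\mathcal P(T)=\binom{s+t}{s}$ then gives the non-special count \eqref{numbernonspecial1eq}.

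The main obstacle is the middle of the argument: proving that the algebraic invariant $\max_j\tau_j$ coming out of Theorem~\ref{kappathm} is \emph{exactly} the maximal value of (height $-$ horizontal) along the hook-code path, with correct treatment of the padding by zero parts and of the boundary values $v=0$ and $v=s$ (one must verify that, in the relevant range $m>\delta$, no spurious contribution appears at $v=s$, equivalently that the reflected start $(-m,m)$ lies genuinely across the line, i.e. $m\ge 1$). Once this dictionary between $\tau_j$ and the path is secured, the reflection principle delivers the single binomial coefficient with no further computation, and the special/non-special counts are immediate corollaries.
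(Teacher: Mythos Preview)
Your argument is correct and takes a genuinely different route from the paper's proof. The paper argues by induction on $d$: it uses the ``remove the first column'' operation $P\mapsto\bar P$ of Lemma~\ref{inductionlem}, splits into the two cases $h_n=0$ and $h_n>0$, applies the inductive hypothesis to $\bar T=(1,\dots,d-1,\bar t)$, and recombines the two resulting counts $\binom{s+t-1}{k-1}$ and $\binom{s+t-1}{k}$ via Pascal's identity. Your approach is instead a direct bijection: you read off from Theorem~\ref{kappathm} that $\kappa(P)\ge k$ is a pure lattice-path condition on the hook-code path (touching the diagonal $y=x+m$), and a single application of Andr\'e's reflection produces the binomial coefficient with no induction. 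Both proofs rest on Theorem~\ref{kappathm}; yours is shorter and explains conceptually \emph{why} the answer is a single binomial, while the paper's induction mirrors the algebraic recursion $I\mapsto(I:x)$ that runs through the whole section.

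One small point to tighten: the hypothesis you need for the reflection map to be a bijection with \emph{all} NE-paths from $(-m,m)$ to $(s,t)$ is not merely $m\ge1$ (so that the start $(0,0)$ lies strictly below the line) but also that the endpoint $(s,t)$ lies on or below the line, i.e.\ $t-s\le m$. This does hold, since $m>\delta=\max\{t+1-s,0\}\ge t-s$, so in fact $t-s<m$ strictly; but it is worth stating, as otherwise the reflected count would be the wrong binomial. With this recorded, your dictionary $\max_j\tau_j=\max_{0\le v\le s}\bigl(c(v)-v\bigr)$ (valid up to the harmless value $0$ contributed at $v=0$ when $h_n>0$, which is below $m\ge1$ anyway) and the reflection count go through exactly as you outline.
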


\begin{proof}

Consider an integer $k$ such that $s+\delta<k\leq s+t$. Let $\gamma=k-s$. Then $\delta <\gamma \leq t$. Using the correspondence established in Section~\ref{latticecorrsec}, partitions $P$ of diagonal lengths $T$ and $\kappa(P)\geq k$ correspond to paths from $(0,0)$ to $(s,t)$ intersecting the line $y=x+\gamma$. Note that the inequality $\delta <\gamma$ implies that the line $y=x+\gamma$ is above the line $y=x+\delta$. 

To count the number of paths from $(0,0)$ to $(s,t)$ intersecting the line $y=x+\gamma$, we first count the number of paths from $(0,0)$ to $(s,t)$ that stay weakly below the line $y=x+\gamma-1$. We will then subtract this number from the total number of paths from $(0,0)$ to $(s,t)$, which is $\binom{s+t}{s}$.
\par 
Since $\delta <\gamma$ and $\delta=\max\{0, t+1-s\}$, both endpoints $(0,0)$ and $(s,t)$ are weakly below $y=x+\gamma-1$. Therefore the set of paths weakly below the line $y=x+\gamma-1$ is non empty. 
\par
Using a simple vertical translation by $\gamma-1$ units, it is clear that the number of paths from $(0,0)$ to $(s,t)$ that are weakly below the line $y=x+\gamma-1$ is the same as the number of paths from $(0,-(\gamma-1))$ to $(s,t-(\gamma-1))$ that are weakly bellow the line $y=x$. By Theorem 10.3.1 of \cite{Kc}, the number of such paths is 
$\displaystyle{\binom{s+t}{s}-\binom{s+t}{s+\gamma}}$. 
This completes the proof of the theorem.
\end{proof}
\begin{cor}\label{singlecountcor}
Let $T=(1, \dots, d, t_d=t,0)$, $s=d+1-t$, and $\delta=\max\{t+1-s,0\}$. For a positive integer $k$, we define $\mu(T,k)$ to be the number of partitions $P$ with diagonal lengths $T$ and $\kappa(P)=k$. Then 
\[
\mu(T,k)=
\left\{
\begin{array}{ll}
    \binom{s+t}{s}-\binom{s+t}{s+\delta+1},   & \mbox{ if } k=s+\delta \mbox{ (non-special $P$)}, \\\\
    \binom{s+t}{k}-\binom{s+t}{k+1},           & \mbox{ if } s+\delta<k\leq s+t,\\\\
    0,                                         & \mbox{ otherwise. }
\end{array}
\right.
\]
\end{cor}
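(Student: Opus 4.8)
The plan is to realize $\mu(T,k)$ as a first difference of the cumulative counts already supplied by Theorem \ref{countingthm}. Writing $N_{\ge k}$ for the number of partitions $P\in\mathcal P(T)$ with $\kappa(P)\ge k$, one has tautologically $\mu(T,k)=N_{\ge k}-N_{\ge k+1}$, so the entire statement reduces to tabulating $N_{\ge k}$ in each range of $k$ and telescoping.

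First I would pin down $N_{\ge k}$ over the whole range of $k$. From Theorem \ref{kappathm} every $P\in\mathcal P(T)$ obeys $s+\delta=\kappa(T)\le\kappa(P)\le s+t$. Consequently $N_{\ge k}=\#\mathcal P(T)=\binom{s+t}{s}$ for every $k\le s+\delta$ (using Lemma \ref{count1blockcor} together with $\binom{s+t}{t}=\binom{s+t}{s}$); next $N_{\ge k}=\binom{s+t}{k}$ for $s+\delta<k\le s+t$, which is exactly Theorem \ref{countingthm}; and $N_{\ge k}=0$ for $k>s+t$.

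The corollary then drops out of $\mu(T,k)=N_{\ge k}-N_{\ge k+1}$. For $s+\delta<k\le s+t$ both $N_{\ge k}$ and $N_{\ge k+1}$ lie in the middle (or boundary) regime, so $\mu(T,k)=\binom{s+t}{k}-\binom{s+t}{k+1}$. For $k=s+\delta$ we subtract $N_{\ge s+\delta+1}=\binom{s+t}{s+\delta+1}$ from the total $N_{\ge s+\delta}=\binom{s+t}{s}$, giving the non-special count $\binom{s+t}{s}-\binom{s+t}{s+\delta+1}$ of \eqref{numbernonspecial1eq}. For $k<s+\delta$ both terms equal $\binom{s+t}{s}$, and for $k>s+t$ both vanish, so $\mu(T,k)=0$ in the ``otherwise'' range.

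The only delicate point, which I would check explicitly, is that the clean formula $N_{\ge k}=\binom{s+t}{k}$ agrees with the flanking regimes at the two endpoints of its range. At the top, $k=s+t$ gives $\mu(T,s+t)=\binom{s+t}{s+t}-\binom{s+t}{s+t+1}=1$, consistent with the convention $\binom{s+t}{s+t+1}=0$ and with $N_{\ge s+t+1}=0$. At the bottom I would treat the degenerate case $\delta=t$ (equivalently $s=1$), where the middle range $s+\delta<k\le s+t$ is empty and $s+\delta=s+t$; here the non-special formula reads $\binom{s+t}{s}-\binom{s+t}{s+t+1}=\binom{s+t}{s}$, correctly reporting that all partitions are non-special. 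Since these are routine arithmetic checks once Theorem \ref{countingthm} is in hand, no genuine obstacle remains; the combinatorial content lies entirely in that theorem.
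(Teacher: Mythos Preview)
Your proof is correct and follows exactly the approach implicit in the paper: the paper states the result as a corollary with no separate proof, since it is obtained by taking first differences $\mu(T,k)=N_{\ge k}-N_{\ge k+1}$ of the cumulative counts supplied by Theorem~\ref{countingthm} and Lemma~\ref{count1blockcor}. Your handling of the boundary cases ($k=s+t$ and the degenerate $s=1$ case where $\delta=t$) is appropriate and matches what the paper leaves to the reader.
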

\begin{remark}

 
 We note that for $k=s+\delta$,  the number $\mu(T,k)$ is the coefficient of the degree $k=s+\delta$ term in $(1+z)^{s+t}\left(z^\delta-\frac{1}{z}\right)$ while 
for $s+\delta<k< s+t$, the number $\mu(T,k)$ is the same as the coefficient of the degree $k$ term in $(1+z)^{s+t}\left(1-\frac{1}{z}\right)$.
\end{remark}

\begin{remark}
In \cite[Section 2C]{IY} it was shown that for any single-block Hilbert function $T$, there will be a unique minimal finite set of special partitions of diagonal lengths $T$, such that any special partition is in the closure of the minimal set. The following example shows that the special cells do not form an irreducible subfamily of $\G_T$.
\end{remark}
\begin{example}[Single-block table]\label{table2ex}
Let $T=(1,2,3,4,2,0)$, then $t=2,s=3$ and  $\mathfrak B(T)=({\mathfrak B}_4(T))=((2\times 3)),$ and there are $\binom{5}{2}=10$ partitions of diagonal lengths $T$.  We give Figure \ref{420table} for these, specifying the hook code, and $\kappa(P)$ for each. Here $\delta(T)=\max\{0,s+1-t\}=0$ and $ \kappa(T)=3$.  We have placed
conjugate partitions in symmetric positions from the center line; the two middle partitions of hook codes $(3,0)$ and $(2,1)$ are self-conjugate. Note also that the conjugate partition $P^\vee$ has the complementary hook code in ${\mathfrak B}_4(T)=(3,3)$. \par
Figure~\ref{specializefig} gives the specialization diagram for  $\mathcal P(T)$, corresponding to inclusion of the Ferrers diagrams for the hook codes $\mathfrak h_4(P)$ (on the left).  \par
We see from the table that the cells in 
$\kappa(P)\ge 4$ are the union of the closures of cells having hook codes  $ (1,1)$ and $(3,0)$: $ \kappa(P) = 4$ includes also the cells with hook codes $(2,0)$ and $ (1,0),$ while the cell with hook code $(0,0)$ is the unique with $\kappa(P)=5$ (these cells are colored red/blue on the left of Figure~\ref{specializefig}.)  Thus, the subvariety of cells corresponding to special partitions is here the union of two irreducible components, of dimensions three (closure of $(3,0)$) and two (closure of $(1,1)$), respectively.  \par

\end{example}
\begin{figure}
\begin{center}$
\begin{array}{|c|c|c|c|c|c|}
\hline
P&\mathfrak h_4&\tau_1&\tau_2&\kappa(P)\\
\hline\hline
(5,4,2,1)&(3,3)&-1&&3\\
\hline
(5,3,3,1)&(3,2)&-1&-1&3\\
\hline
( 5,3,2,2)&(3,1)&0&-1&3\\
\hline
(4,4,3,1)&(2,2)&0&&3\\
\hline\hline
(5,3,2,1,1)&(3,0)&-2&1&4\\
\hline
(4,4,2,2)&(2,1)&0&0&3\\
\hline\hline
(4,3,3,2)&(1,1)&1&0&4\\
\hline
(4,4,2,1,1)&(2,0)&0&1&4\\
\hline
(4,3,3,1,1)&(1,0)&1&1&4\\
\hline
(4,3,2,2,1)&(0,0)&2&&5\\
\hline
\end{array}$
\end{center}
\caption{Table of $\mathcal P(T), T=(1,2,3,4,2,0)$. See Example \ref{table2ex}.}\label{420table}
\end{figure}

\footnotesize
\begin{figure}
		$\boxed{\xymatrix{&\underline{\phantom{a}\mathfrak h_4(P)\phantom{a}}&\\&{(3,3)}\ar[d]& \\
		&{(3,2)}\ar[ld]\ar[rd]&\\
		(3,1)\ar[d] \ar[rrd]&&(2,2)\ar[d]\\
		{\textcolor{red}{(3,0)}}\ar[d]&&(2,1)\ar[d] \ar[lld]\\
		{\textcolor{red}{(2,0)}}\ar[rd]&&{\textcolor{red}{(1,1)}}\ar[ld]\\
		&{\textcolor{red}{(1,0)}}\ar[d]&\\
		&{\textcolor{blue}{(0,0)}}&}$
			\qquad\quad $\xymatrix{&\underline{\phantom{a}P\phantom{a}}&\\&{(5,4,2,1)}\ar[d]& \\
		&{(5,3,3,1)}\ar[ld]\ar[rd]&\\
		(5,3,2,2)\ar[d] \ar[rrd]&&(4,4,3,1)\ar[d]\\
		(5,3,2,1,1)\ar[d]&&(4,4,2,2)\ar[d] \ar[lld]\\
		(4,4,2,1,1)\ar[rd]&&(4,3,3,2)\ar[ld]\\
		&(4,3,3,1,1)\ar[d]&\\
		&(4,3,2,2,1)&}}$
				\caption{Specialization diagram for $\mathcal P(T), T=(1,2,3,4,2,0)$. See Example \ref{table2ex}.}\label{specializefig}
			\end{figure}
\normalsize

\section{Number of generators for multiblock partitions.}\label{numGensMultiSection}
Throughout this section, $T=(1, \dots, d, t_d, \dots, t_{\sf j}, 0)$, and $P$ is a partition  lengths $T$ and difference-one hook code $\mathfrak{Q}(P)=(\mathfrak{h}_d, \dots , \mathfrak{h}_{\sf j})$.

Recall from Equation \eqref{HFdecomposition} that for $i=d, \dots, {\sf j}$, we set $\delta_i=t_{i-1}-t_i, t_{d-1}:=d$ and $$T_i=(1, \dots, \delta_i+\delta_{i+1}, \delta_{i+1}, 0).$$ 
As we saw in Definition \ref{Pidef1} and Proposition \ref{hookP(i)lem}, a partition $P\in \mathcal P(T)$ can be decomposed into single-block ``component'' partitions $P_i$. For $i=d, \dots, {\sf j}$, the partition $P_i$ has diagonal lengths $T_i$ and difference-one hook code $\mathfrak{h}_i$. We note that by construction the hook code for $P_i$ is $\mathfrak{h}_i$: however, the hand-degree for the hooks in $P_i$ is $\delta_i+\delta_{i+1}$, and they correspond to hooks in $P$ of hand-degree $i$. We showed in Theorem~\ref{projectionthm} that the cells $\mathbb V(E_P)$ are naturally the product of the corresponding cells  $\mathbb V(E_{P_i})$.
\par 
In this section we count the minimum number of generators for ideals in the cell associated to an arbitrary partition. The main results of this section are Theorem~\ref{extra_gens_in_given_degree} and Theorem \ref{componentTheorem}. Theorem~\ref{extra_gens_in_given_degree} specifies the number $\beta_{i,0}(P)$ of degree-$i$ generators of an ideal $I$ defining a generic element of $\mathbb V(E_P)$. In Theorem \ref{componentTheorem} we provide a formula for $\kappa(P)$ in terms of the $\kappa(P_i)$ of single-block components of $P$. The value $\kappa(P_i)$ for a single-block partition was determined in Theorem~\ref{kappathm}.


The $i$-th block $\mathfrak{h}_i(P)$ of the hook code $\mathfrak{Q}(P)$ is a key element of our statements. So we first give a formula for $\kappa(P_i)$ when $\mathfrak{h}_i(P)$ is empty.  Recall that the case $\mathfrak{h}_i=\emptyset$ occurs when $t_i=t_{i+1}$ and in this case, we can think of $P_i$ as the basic triangle $\Delta_{\delta_i}$ (Remark \ref{empty_hook_code_block_partition}).
\begin{proposition}\label{kappa-of-empty-bloc}
If the block $\mathfrak{h}_i(P)$ is empty, then $\kappa(P_i)=\delta_i+1$. 
\end{proposition}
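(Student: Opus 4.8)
The plan is to show that an empty block $\mathfrak{h}_i(P)$ forces the component $P_i$ to be the basic triangle $\Delta_{\delta_i}$, whose associated monomial ideal is the full power $\mathfrak m^{\delta_i}$, and then simply to count its generators. Note that Theorem \ref{kappathm} is phrased in terms of a genuinely non-empty hook code $(h_1^{l_1},\dots,h_n^{l_n})$ with $n\ge 1$ parts, so it cannot be quoted directly in this degenerate situation; this is precisely the gap the Proposition fills, and the argument must be self-contained.

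First I would record the consequences of $\mathfrak{h}_i(P)=\emptyset$. By Remark \ref{empty_hook_code_block_partition} this happens exactly when $\delta_{i+1}=0$; then $V_{i2}=\emptyset$, so $d_i=\delta_i$ and $s_i(V_{i1})=\Mon(R_{d_i})$. Substituting into the construction of Definition \ref{Pidef1} gives
\[
(E_{P_i})=\langle s_i(V_{i1})\rangle+\mathfrak m^{d_i+1}=\langle \Mon(R_{\delta_i})\rangle+\mathfrak m^{\delta_i+1}=\mathfrak m^{\delta_i},
\]
since the degree-$\delta_i$ monomials already generate $\mathfrak m^{\delta_i}\supseteq\mathfrak m^{\delta_i+1}$. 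Thus $P_i=\Delta_{\delta_i}$, in agreement with Remark \ref{empty_hook_code_block_partition}(iii).

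Next I would observe that the cell $\mathbb V(E_{P_i})$ is a single point, so that the \emph{generic} ideal in it is literally $\mathfrak m^{\delta_i}$. This can be seen two ways: on the one hand $\delta_{i+1}=0$ forces $\dim_k V_i=d_i+1=\delta_i+1$, so the target $\Grass(\delta_i+1,V_i)$ of the isomorphism of Lemma \ref{morphismlem} (and Theorem \ref{projectionthm}) is a single point; on the other hand the empty box $\mathfrak B_i(T)$ admits only the empty hook code, so $\mathcal P(T_i)$ is a singleton by Theorem \ref{PTtoQTthm}, whence $\G_{T_i}=\mathbb V(E_{P_i})$ and $\kappa(P_i)=\kappa(T_i)$. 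I would then finish by counting: $\mathfrak m^{\delta_i}$ is minimally generated by the $\delta_i+1$ degree-$\delta_i$ monomials $x^{\delta_i},x^{\delta_i-1}y,\dots,y^{\delta_i}$, giving $\kappa(P_i)=\delta_i+1$.

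As a consistency check I would re-derive the same value from Equation \eqref{kappaTsingleeq}: here $t_{d_i}=\delta_{i+1}=0$ and $s=d_i+1-t_{d_i}=\delta_i+1$, so $\delta=\max\{0+1-(\delta_i+1),0\}=0$ and $\kappa(T_i)=s+\delta=\delta_i+1$. The one point to watch is the fully degenerate subcase $\delta_i=0$, where $\Delta_0=\emptyset$ and the ideal degenerates to all of $R$; the formula still returns $\delta_i+1=1$, and I would simply note that this trivial block behaves as expected. The main obstacle is therefore not any estimate but bookkeeping: keeping the degree shift $d_i=\delta_i$ straight and confirming that the count replacing the $n\ge 1$ formula of Theorem \ref{kappathm} is the correct one when the box is empty.
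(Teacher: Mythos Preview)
Your proof is correct and follows essentially the same approach as the paper: identify $P_i$ with the basic triangle $\Delta_{\delta_i}$ via Remark \ref{empty_hook_code_block_partition}, then count the $\delta_i+1$ degree-$\delta_i$ monomial generators of $\mathfrak m^{\delta_i}$. Your version is more detailed than the paper's (which dispatches the argument in three sentences), and in particular you make explicit the point that $\mathbb V(E_{P_i})$ is a single point so that the generic ideal \emph{is} the monomial ideal---a justification the paper leaves implicit.
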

\begin{proof}
Recall that the case $\mathfrak{h}_i=\emptyset$ occurs when $t_i=t_{i+1}$ and in this case, we can think of $P_i$ as the basic triangle 
$\Delta_{\delta_i}$ (see Remark \ref{empty_hook_code_block_partition}). It is then clear that the monomial ideal whose cobasis is the basic triangle 
$\Delta_{\delta_i}$ has exactly $\delta_i+1$ generators, the degree $\delta_i$ monomials. Note that the basic triangle 
$\Delta_{\delta_i}$ is empty when $\delta_i=0$.
\end{proof}
\begin{example}\label{kappa-of-empty-bloc-example}
Consider the partition $P=(8,7,4,2,1,1)$ of diagonal lengths $T=(1,2,3,4,5,4,2,2,0)$. The degree 6 block $\mathfrak{h}_6(P)$ is empty ($t_6=t_7=2$). 
$\kappa(P_6)=\delta_6+1=3$ and this number corresponds to the three border monomials $(x^4y^2,x^3y^3,y^6)$ of $E_P$.
\end{example}

From now on, we will assume that the $i$-th block $\mathfrak{h}_i(P)$ of the hook code $\mathfrak{Q}(P)$ is not empty. 
For $i=d, \dots, {\sf j}$, the degree $i$ block $\mathfrak{h}_i$ in the hook code of $P$ can be written as
\begin{equation}\label{hookcomponent}
\mathfrak{h}_i=\left(h_{i,1}^{l_{i,1}}, \dots , h_{i,k}^{l_{i,k}}, \dots, h_{i,n_{i}}^{l_{i,n_{i}}}\right)
\end{equation}

where $\sum\limits_{k=1}^{n_i}l_{i,k}=\delta_{i+1}$ and
$
\delta_i+1\geq h_{i,1}>h_{i,2}>\cdots >h_{i,n_i}\geq 0.
$
\par

In the following, we define two sequences of integers $R(P)_i$ and $G(P)_i$ for single-block components $P_i$ of $P$ that allow us to count degree $i+1$ relations and degree $i+1$ generators of the monomial ideal $E_P$ associated to $P$.\\

 \begin{definition}[Sequences of degree $i+1$ relations and generators]\label{degree_i+1_rel_gen_sequences}  \
 Denote by $R_{i,k}$ the set of degree $i+1$ relations below the $\left(l_{i,1}+ \dots + l_{i,k}\right)$-th degree $i$ hand of $P$. 
 Then we let $R(P)_i=\left(r_{i,1}, \dots , r_{i,n_{i}} \right)$ where $r_{i,k}$ counts the number of  elements of  $R_{i,k}$. 

\medskip
\noindent
Also, let $G_{i,1}$ be the set of degree $i+1$ corner-monomials of $E_P$ and 
 for $2\leq k\leq n_i$, denote by $G_{i,k}$ the set of degree $i+1$ corner-monomials of $E_P$ below the 
 $\left(l_{i,1}+ \dots + l_{i,k-1}+1\right)$-th degree $i$ hand of $P$. Then $G(P)_i=\left(g_{i,1}, \dots , g_{i,n_{i}} \right)$ 
 where $g_{i,k}$ counts the number of  elements of  $G_{i,k}$. 
 \end{definition}
 
 \begin{remark}[Chains of degree $i+1$ relations and generators]\label{chains_of_degree_i+1_rel_gen}
 Note that using the above notation, we have  the following sequences of inclusions. 
 $$
 \begin{array}{l}
 R_{i,n_i}\subset R_{i,n_i-1} \subset \cdots \subset R_{i,k+1}\subset R_{i,k} \subset \cdots \subset R_{i,1}\\
 G_{i,n_i}\subset G_{i,n_i-1} \subset \cdots \subset G_{i,k+1}\subset G_{i,k} \subset \cdots \subset G_{i,1}
 .
 \end{array}
 $$
 So by definition, $R(P)_i=\left(r_{i,1}, \dots , r_{i,n_{i}} \right)$ and $G(P)_i=\left(g_{i,1}, \dots , g_{i,n_{i}} \right)$ are non-increasing sequences. 

\end{remark}

 \bigskip
 \noindent
 We now use the hook code $\mathfrak{h}_i=\left(h_{i,1}^{l_{i,1}}, \dots,  h_{i,k}^{l_{i,k}}, \dots , h_{i,n_{i}}^{l_{i,n_{i}}}\right)$ to give formulas for the integers $r_{i,k}$ and $g_{i,k}$ of Definition \ref{degree_i+1_rel_gen_sequences}. 
 
 \begin{observation}\label{degree_i+1_rel_gen_observation}
 By definition of the difference-one hook code, for any integer $1\leq k\leq n_i$, there are $(h_{i,k}-h_{i,k+1})$ 
 \textsl{horizontal-border monomials} below the $\left(l_{i,1}+ \dots + l_{i,k}\right)$-th degree $i$ hand and above the 
 $\left(l_{i,1}+ \dots + l_{i,k}+1\right)$-th degree $i$ hand of $P$. 
 Let $\left(M_{i,1}, \dots ,M_{i,h_{i,k}-h_{i,k+1}}\right)$ be the list of these monomials, ordered from top to bottom. 
 Then there is exactly one linear (occuring in degree $(i+1)$) relation between any two consecutive monomials $M_{i,m}$ and $M_{i,m+1}$ 
 ($1\leq m\leq h_{i,k}-h_{i,k+1}$). So, we have exactly $(h_{i,k}-h_{i,k+1}-1)$ linear relations below the $\left(l_{i,1}+ \dots + l_{i,k}\right)$-th degree-$i$ hand and above the $\left(l_{i,1}+ \dots + l_{i,k}+1\right)$-th degree-$i$ hand of $P$. \\
 Also we observe that each degree-$i$ hand of $P$ is just left of a degree $(i+1)$ \textsl{vertical-border monomial} of $E_P$. 
 We use this property to count the degree $(i+1)$ corner-monomials of $E_P$. 
 \end{observation}

 \begin{example}\label{degree_i+1_rel_gen_observation_ex}
 Consider the partition $P=(12^2,11,8^2,7,6^2,3^2,2^2)$ (see Figure \ref{degree_i+1_rel_gen_observation_fig}) whose Hilbert function is 
 $T=(1,2,\cdots, 10,11,10,4)$. Here we have $\mathfrak{h}_{12}=\left(6^2,3,1\right)$. So $h_{12,1}=6$, $h_{12,2}=3$, $h_{12,3}=1$, $l_{12,1}=2$, 
 $l_{12,2}=1$ and $l_{12,3}=1$. There are $(h_{12,1}-h_{12,2})=3$ \textsl{horizontal-border monomials} that are below the second hand monomial \textcolor{red}{$x^{10}y^2$} and above the third ($l_{12,1}+1=3$) hand monomial \textcolor{red}{$x^{5}y^7$}. Also we have exactly two linear relations that are below \textcolor{red}{$x^{10}y^2$} and above \textcolor{red}{$x^{5}y^7$}.
 \end{example} 
 \begin{figure}[!h]
\begin{center}
	\begin{tikzpicture}[scale=.8]
	\draw[dashed,very thin] 	(0,0)--(12,0)
						(0,-1)--(12,-1)
						(0,-2)--(11,-2)
						(0,-3)--(8,-3)
						(0,-4)--(8,-4)
						(0,-5)--(7,-5)
						(0,-6)--(6,-6)
						(0,-7)--(6,-7)
						(0,-8)--(3,-8)
						(0,-9)--(3,-9)
						(0,-10)--(2,-10)
						(0,-11)--(2,-11)
						(0,0)--(0,-12)
						(1,0)--(1,-12)
						(2,0)--(2,-10)
						(3,0)--(3,-8)
						(4,0)--(4,-8)
						(5,0)--(5,-8)
						(6,0)--(6,-6)
						(7,0)--(7,-5)
						(8,0)--(8,-3)
						(9,0)--(9,-3)
						(10,0)--(10,-3)
						(11,0)--(11,-2);
	\draw[thin, dashed,red,fill=yellow!10] (11,-1) rectangle (12,-2)
							(10,-2) rectangle (11,-3)
							(5,-7) rectangle (6,-8)
							(1,-11) rectangle (2,-12);
	\draw[thin, blue,fill=black!5,rounded corners=3mm] (9,-3) rectangle (10,-4)
						     (7,-5) rectangle (8,-6)
						     (6,-6) rectangle (7,-7)
						     (4,-8) rectangle (5,-9)
						     (2,-10) rectangle (3,-11)
						     (0,-12) rectangle (1,-13);
	\draw[red] node at (11.5,-1.5) {\footnotesize{$x^{11}y$}}
			  node at (10.5,-2.5) {\footnotesize{$x^{10}y^2$}}
			  node at (5.5,-7.5) {\footnotesize{$x^5y^7$}}
			  node at (1.5,-11.5) {\footnotesize{$xy^{11}$}};
	\draw[blue] node at (9.5,-3.5) {\footnotesize{$x^9y^3$}}
			    node at (7.5,-5.5) {\footnotesize{$x^7y^5$}}
			    node at (6.5,-6.5) {\footnotesize{$x^6y^6$}}
			    node at (4.5,-8.5) {\footnotesize{$x^4y^8$}}
			    node at (2.5,-10.5) {\footnotesize{$x^2y^{10}$}}
			    node at (0.5,-12.5) {\footnotesize{$y^{12}$}};
	\draw[very thick] (12,0)--(12,-2)--(11,-2)--(11,-3)--(8,-3)--(8,-5)--(7,-5)--(7,-6)--(6,-6)--(6,-8)--(3,-8)--(3,-10)--(2,-10)--(2,-12)--(0,-12);
         \end{tikzpicture}
         \caption{Ferrers diagram of $P=(12,11,8^2,7,6^2,3^2,2^2)$: \textsl{horizontal-border monomials} are marked in blue and 
         \textsl{hand monomials} are marked in red (Example \ref{degree_i+1_rel_gen_observation_ex}).}\label{degree_i+1_rel_gen_observation_fig}
\end{center}
\end{figure}

\begin{lemma}[Counting degree $i+1$ relations and corner-monomials of $E_P$]\label{degree_i+1_rel_gen_formulas} 
Let $\mathfrak{h}_i=\left(h_{i,1}^{l_{i,1}}, \dots,  h_{i,k}^{l_{i,k}}, \dots , h_{i,n_{i}}^{l_{i,n_{i}}}\right)$ be the $i$-th block of the hook code $\mathfrak{Q}(P)$. \\
 Then the sequences $R(P)_i=\left(r_{i,1}, \dots , r_{i,n_{i}} \right)$ and $G(P)_i=\left(g_{i,1}, \dots , g_{i,n_{i}} \right)$ of Definition \ref{degree_i+1_rel_gen_sequences} are given by the following numbers. 
 \begin{enumerate}[(a)]
 \item If $h_{i,n_i}>0$, then 
 \begin{itemize}
 \item[$\bullet$]
 $
 g_{i,1}=\left\{
 \begin{array}{lr}
 \delta_{i+1}-(n_i-1) & \text{if} \ h_{i,1}=\delta_i +1\\
 \delta_{i+1}-n_i & \text{if} \ h_{i,1}<\delta_i +1
 \end{array}
 \right.
 $
 \item[$\bullet$]
 $
 \displaystyle{
 g_{i,k}=\sum_{j=k}^{j=n_i}l_{i,j}-(n_i+1-k)
 }
 $, for $2\leq k\leq n_i$
 \item[$\bullet$]
 $
 r_{i,k}=h_{i,k}-(n_i+1-k) 
 $, 
 for $1\leq k\leq n_i$.
 \end{itemize}
 \item If $h_{i,n_i}=0$, then 
 \begin{itemize}
 \item[$\bullet$]
 $
 g_{i,1}=\left\{
 \begin{array}{lr}
 \delta_{i+1}-(n_i-2) & \text{if} \ h_{i,1}=\delta_i +1\\
 \delta_{i+1}-(n_i-1) & \text{if} \ h_{i,1}<\delta_i +1
 \end{array}
 \right.
 $
 \item[$\bullet$]
 $
 \displaystyle{
 g_{i,k}=\sum_{j=k}^{j=n_i}l_{i,j}-(n_i-k)
 }
 $, for $2\leq k\leq n_i$
 \item[$\bullet$]
 $
 r_{i,k}=h_{i,k}-(n_i-k)
 $
 for $1\leq k\leq n_i$.
 \end{itemize}
\end{enumerate}
\end{lemma}

\begin{proof}
Note that by definition of the difference-one hook code, we have
$\delta_i+1\geq h_{i,1}> \dots > h_{i,n_{i}}\geq 0$  and $\displaystyle{\sum_{k=1}^{k=n_i}l_{i,k}=\delta_{i+1}}$. \\
The numbers given in the Lemma come directly from Observation \ref{degree_i+1_rel_gen_observation}. 
\begin{enumerate}[(a)]
\item If $h_{i,n_i}>0$, then we have 
\begin{itemize}
\item[$\bullet$] 
$\displaystyle{g_{i,1}=c_{i,1}+(l_{i,2}-1)+\cdots + (l_{i,n_i}-1)=c_{i,1}+\sum_{k=2}^{k=n_i}l_{i,k}-(n_i-1)}$ where \\
 $
 \displaystyle{
 c_{i,1}=\left\{
 \begin{array}{ll}
 l_{i,1} & \text{if} \ h_{i,1}=\delta_i+1 \\
 l_{i,1}-1 & \text{if} \ h_{i,1}<\delta_i+1
 \end{array}
 \right.
 }
 ,
 $ 
and for $2\leq k\leq n_i$,  we get \\
$\displaystyle{g_{i,k}=(l_{i,k}-1)+\cdots + (l_{i,n_i}-1)=\sum_{j=k}^{j=n_i}l_{i,j}-(n_i+1-k)}$.
\item[$\bullet$] 
$r_{i,k}=(h_{i,k}-h_{i,k+1}-1) + \cdots + (h_{i,n_i-1}-h_{i,n_i}-1)+(h_{i,n_i}-1)=h_{i,k}-\left(n_i -(k-1)\right)$
\end{itemize}
\item If $h_{i,n_i}=0$, then we have 
\begin{itemize}
\item[$\bullet$] 
$\displaystyle{g_{i,1}=c_{i,1}+(l_{i,2}-1)+\cdots +(l_{i,n_{i-1}}-1)+l_{i,n_i}=c_{i,1}+\sum_{k=2}^{k=n_i}l_{i,k}-(n_i-2)}$ where \\
 $
 \displaystyle{
 c_{i,1}=\left\{
 \begin{array}{ll}
 l_{i,1} & \text{if} \ h_{i,1}=\delta_i+1 \\
 l_{i,1}-1 & \text{if} \ h_{i,1}<\delta_i+1
 \end{array}
 \right.
 }
 ,
 $ 
and for $2\leq k\leq n_i$,  we get \\
$\displaystyle{g_{i,k}=(l_{i,k}-1)+\cdots + (l_{i,n_{i-1}}-1)+l_{i,n_i}=\sum_{j=k}^{j=n_i}l_{i,j}-(n_i-k)}$.
\item[$\bullet$] 
$r_{i,k}=(h_{i,k}-h_{i,k+1}-1) + \cdots +(h_{i,n_{i-1}}-1)=h_{i,k}-\left(n_i -k\right)$.
\end{itemize}
\end{enumerate}
\end{proof}

\begin{remark}\label{rem_theta}
Using the formulas for $r_{i,k}$ and $g_{i,k}$, if we let $\theta_{i,k}=g_{i,k}-r_{i,k}$ ($1\leq k\leq n_i$),
we obtain
\begin{itemize}
\item[$\bullet$]
$
\theta_{i,1}=\left\{
\begin{array}{ll}
\delta_{i+1}+1-h_{i,1} & \text{if} \ h_{i,1}=\delta_i+1\\
\delta_{i+1}-h_{i,1} & \text{if} \ h_{i,1}<\delta_i+1
\end{array}
\right.
$
\item[$\bullet$]
$
\displaystyle{
\theta_{i,k}=\sum_{j=k}^{j=n_i}l_{i,j}-h_{i,k}
}
$ for $2\leq k\leq n_i$.
\end{itemize}
\end{remark}

\begin{remark}\label{compact_formulas_for_gik_rik}
For $1\leq k\leq n_i$, the formulas in Lemma \ref{degree_i+1_rel_gen_formulas}  can be rewritten in a compact way using the invariants of the hook code Equation \eqref{hookcomponent}
\begin{itemize}
\item[$\bullet$]
$
 \displaystyle{
 g_{i,k}=\sum_{j=k}^{j=n_i}l_{i,j}-(n_i+1-k)+\max\{1-h_{i,n_i},0\}+\max\{h_{i,k}-\delta_i,0\}
 }
 $;
 \item[$\bullet$]
 $
 r_{i,k}=h_{i,k}-(n_i+1-k) +\max\{1-h_{i,n_i},0\}
 $.
\end{itemize}
So,  $
\displaystyle{
\theta_{i,k}=\sum_{j=k}^{j=n_i}l_{i,j}-h_{i,k}+\max\{h_{i,k}-\delta_i,0\}
}
$ for $1\leq k\leq n_i$.  \\
As in Theorem \ref{kappathm}, let $\displaystyle{\tau_{i,k}=\sum_{j=k}^{j=n_i}l_{i,j}-h_{i,k}}$. Then $\tau_{i,k}=\theta_{i,k}$ for $2\leq k \leq n_i$ and 
$\tau_{i,1}=\theta_{i,1}-\max\{h_{i,1}-\delta_i,0\}$. These are the ingredients for the formula Equation \eqref{beta0eq} for $\beta_{i+1,0}(P)=N_{i+1}-N_i$ in Theorem \ref{extra_gens_in_given_degree}, which gives the number of generators of degree-$i$ for an ideal $ I$ defining a generic element $A $ of $\mathbb V(E_P)$.
\end{remark}

\begin{theorem}\label{extra_gens_in_given_degree}  
Let $P\in \mathcal P(T), T=\left(1,\dots ,d,t_d,\dots ,t_{\sf j},0\right)$ have hook code
 $\mathfrak{Q}(P)=\left(\mathfrak{h}_d,\dots ,\mathfrak{h}_{i}, \dots, \mathfrak{h}_{{\sf j}}\right)$. 
 Suppose $\mathfrak{h}_i=\left(h_{i,1}^{l_{i,1}}, \dots,  h_{i,k}^{l_{i,k}}, \dots , h_{i,n_{i}}^{l_{i,n_{i}}}\right)$ is the $i$-th block 
 of the hook code $\mathfrak{Q}(P)$ of $P$.  
 Let  $I$ be a generic element of the cell ${\mathbb V}(E_P)$ and ${\mathcal G}(I)$ a minimal set of generators of $I$, and $\beta_{i,0}(P)$ the number of degree-$i$ generators. We denote by $\overset{\to}{\kappa}(P)=\beta_0(P)=\left(\beta_{d,0}(P),\ldots ,\beta_{{\sf j}+1,0}(P)\right)$.
 For $d\leq m \leq {\sf j}+1$, let $$N_m=\#\left\{ f\in {\mathcal G}(I), \text{ such that degree}(f) \leq m \right\}.$$
 Then $N_d=d+1-t_d$, and using the previously defined numbers $r_{i,k}$ and $g_{i,k}$, for $i\in\left[d,{\sf j}\right]$, we have for $\beta_{i+1,0}(P)=N_{i+1}-N_i$,
 \begin{equation}\label{beta0eq}
 \begin{array}{ll}
 N_{i+1}-N_{i}&=\max\left\{0,g_{i,k}-r_{i,k} \right\}_{1\leq k \leq n_i}\\&=\max\left\{0,\theta_{i,k} \right\}_{1\leq k\leq n_i} \\
 &=\max\left\{\delta_{i+1}-\delta_i,0,\tau_{i,k}\right\}_{1\leq k\leq n_i}.
 \end{array}
 \end{equation}

\end{theorem}
\begin{proof}
\noindent
By standard basis construction techniques (see Theorem I.1.9 of \cite{brian}, Propositions 2 and 3 of \cite{brian-gal}), one can  first see that if $r_{i,n_i}\geq g_{i,n_i}$, then there are enough degree 
$i+1$ relations to kick out all of the $g_{i,n_i}$ generators that are just above these relations. Also, if $r_{i,n_i}<g_{i,n_i}$, 
then we need $g_{i,n_i}-r_{i,n_i}$ extra generators whose leading terms are corner- monomials of $E_P$ below the 
$\left(l_{i,1}+ \dots + l_{i,n_i-1}+1\right)$-th degree $i$ hand of $P$. 

\medskip 
\noindent
It is clear that if for all $k$ ($1\leq k\leq n_i$) we have $r_{i,k}\geq g_{i,k}$, then we can kick out all degree $i+1$ generators whose leading terms are degree $i+1$ corner-monomials of $E_P$. \\
Now, suppose there exists an integer $k$ such that $r_{i,k}<g_{i,k}$. Then we can inductively consider the following sets and numbers. 
$$
\begin{array}{lr}
S_0=\left\{k\in {\mathbb N}, \quad  r_{i,k}<g_{i,k}\right\}, & s_0=\max(S_0);
\\
S_1=\left\{k\in {\mathbb N},  \quad k<s_0, \quad r_{i,k}-r_{i,s_0}<g_{i,k}-g_{i,s_0}\right\}, & s_1=\max(S_1);
\\
\dotfill &\dotfill ;
\\
S_{q}=\left\{k\in {\mathbb N},  \quad k<s_{q-1}, \quad r_{i,k}-r_{i,s_{q-1}}<g_{i,k}-g_{i,s_{q-1}}\right\}, & s_q=\max(S_q);
\\
S_{q+1}=\emptyset. & 
\end{array}
$$
The meaning of the sets $S_0, \cdots, S_q$ and the numbers $s_0, \cdots, s_q$ is the following: 
\begin{itemize}
\item[$\bullet$] First, we have $r_{i,s_{0}+1}\geq g_{i,s_{0}+1}, \cdots, r_{i,n_i}\geq g_{i,n_i}$ and $r_{i,s_0}<g_{i,s_0}$. So we need 
$g_{i,s_0}-r_{i,s_0}$ generators whose leading terms are in $G_{i,s_{0}}$, the $s_0$-th part of the chain 
$G_{i,n}\subset G_{i,n-1} \subset \cdots \subset G_{i,s_0}\subset G_{i,s_0-1} \subset \cdots \subset G_{i,1}$. 
This means we have used all the relations in the $s_0$-th part of the chain 
$R_{i,n}\subset R_{i,n-1} \subset \cdots \subset R_{i,s_0}\subset R_{i,s_0-1} \subset \cdots \subset R_{i,1}$.
\item[$\bullet$] Since we have used all the relations in $R_{i,s_0}$, if we are looking for more extra generators, 
the next step is to consider the chains of inclusions 
$$
 \begin{array}{l}
 (R_{i,s_0-1}-R_{i,s_0}) \subset \cdots \subset (R_{i,1}- R_{i,s_0})\\
 (G_{i,s_0-1}-G_{i,s_0}) \subset \cdots \subset (G_{i,1}-G_{i,s_0})
 .
 \end{array}
 $$
If the set $S_1=\left\{k\in {\mathbb N},  \quad k<s_0, \quad r_{i,k}-r_{i,s_0}<g_{i,k}-g_{i,s_0}\right\}$ is not empty, then we 
set $s_1=\max(S_1)$ and continue looking for extra generators until $S_{q+1}=\emptyset$ and $S_{q}\neq \emptyset$ for some index $q$.
\end{itemize}
By construction, the number of degree $i+1$ extra generators needed is 
$$
 g_{i,s_0}-r_{i,s_0}+\sum_{j=1}^{j=q}\left((g_{i,s_j}-g_{i,s_{j-1}})-(r_{i,s_j}-r_{i,s_{j-1}})\right)=g_{i,s_q}-r_{i,s_q}
 .
$$
It is then clear that 
$$
N_{i+1}-N_{i}=g_{i,s_q}-r_{i,s_q}=\max\left\{0,g_{i,k}-r_{i,k} \right\}_{1\leq k \leq n_i} 
.
$$
From the formulas in Remark \ref{rem_theta}  we trivially have $N_{i+1}-N_{i}=\max\left\{0,\theta_{i,k} \right\}_{1\leq k\leq n_i}$.  Now to show the last equality in Equation \ref{beta0eq} notice that  if $h_{i,1}=\delta_{i}+1$, then $\theta_{i,1}=\delta_{i+1}-\delta_i=\tau_{i,1}+1$.  
We then have  
$\left\{0,\theta_{i,k} \right\}_{1\leq k\leq n_i}=\left\{\delta_{i+1}-\delta_i,0,\tau_{i,k}\right\}_{2\leq k\leq n_i}$ because 
$\theta_{i,k}=\tau_{i,k}$ for $2\leq k \leq n_i$. 
Thus 
$$\max\left\{0,\theta_{i,k} \right\}_{1\leq k\leq n_i}=\max\left\{\delta_{i+1}-\delta_i,0,\tau_{i,k}\right\}_{1\leq k\leq n_i}.$$
Otherwise, if $h_{i,1}<\delta_{i}+1$, we have $\theta_{i,k}=\tau_{i,k}$ for $1\leq k \leq n_i$ and 
$\delta_{i+1}-\delta_i\leq \tau_{i,1}$. 
So again 
$\max\left\{0,\theta_{i,k} \right\}_{1\leq k\leq n_i}=\max\left\{\delta_{i+1}-\delta_i,0,\tau_{i,k}\right\}_{1\leq k\leq n_i}$.
\end{proof}\par
\begin{corollary}\label{relate_kappa_P_i-to-kappa_P}
Let $P\in \mathcal P(T), T=\left(1,\dots ,d,t_d,\dots ,t_{\sf j},0\right)$ have hook code
 $\mathfrak{Q}(P)=\left(\mathfrak{h}_d,\dots ,\mathfrak{h}_{i}, \dots, \mathfrak{h}_{{\sf j}}\right)$. 
 Suppose $\mathfrak{h}_i=\left(h_{i,1}^{l_{i,1}}, \dots,  h_{i,k}^{l_{i,k}}, \dots , h_{i,n_{i}}^{l_{i,n_{i}}}\right)$ is the $i$-th block 
 of the hook code $\mathfrak{Q}(P)$ of $P$ and let $P_i$ be the single-block partition related to $\mathfrak{h}_i$.
 Using the notation of Theorem \ref{extra_gens_in_given_degree}, let $\overset{\to}{\kappa}(P)=\beta_0(P)=\left(\beta_{d,0}(P),\ldots ,\beta_{{\sf j}+1,0}(P)\right)$. 
 Then  for $d\leq i \leq j$, $\beta_{i+1,0}(P)=\kappa(P_i)-(\delta_i+1)$.
\end{corollary}
\begin{proof}
Recall that the single-block partition $P_i$ related to $$\mathfrak{h}_i=\left(h_{i,1}^{l_{i,1}}, \dots,  h_{i,k}^{l_{i,k}}, \dots , h_{i,n_{i}}^{l_{i,n_{i}}}\right)$$
is $T_i=\left(1, \ldots,\delta_i+\delta_{i+1}-1, \delta_i+\delta_{i+1},\delta_{i+1} \right)$ (\ref{HFdecomposition}). Applying Theorem \ref{kappathm} to the single-block case where $d=\delta_i+\delta_{i+1}$, $t=\delta_{i+1}$ and $s=\delta_i+1$, we get $\kappa(P_i)=\delta_i+1+\max\left\{\delta_{i+1}-\delta_i,0,\tau_{i,k}\right\}_{1\leq k\leq n_i}$. That is $\kappa(P_i)-\left(\delta_i+1\right)=\max\left\{\delta_{i+1}-\delta_i,0,\tau_{i,k}\right\}_{1\leq k\leq n_i}$.
By Theorem \ref{extra_gens_in_given_degree} we have $\beta_{i+1,0}(P)=N_{i+1}-N_i=\max\left\{\delta_{i+1}-\delta_i,0,\tau_{i,k}\right\}_{1\leq k\leq n_i}$, so $\beta_{i+1,0}(P)=\kappa(P_i)-(\delta_i+1)$.
\end{proof}
\begin{remark}\label{interprete_tau_i_k}
Let $\mathfrak{h}_i=\left(h_{i,1}^{l_{i,1}}, \dots,  h_{i,k}^{l_{i,k}}, \dots , h_{i,n_{i}}^{l_{i,n_{i}}}\right)$ be the $i$-th block of the hook code $\mathfrak{Q}(P)$. 
Moving along the $i$-th diagonal of $P$ from top to bottom, we see that $\displaystyle{\sum_{j=k}^{j=n_i}l_{i,j}}$ is, by definition, the number of degree-$i$ hand monomials of $P$ below the $(l_{i,1}+\ldots+l_{i,k-1})$-th degree-$i$ hand monomial of $P$. Also, by definition, $h_{i,k}$ is the number of degree $i$ \textsl{horizontal-border monomials} of $P$ below the $(l_{i,1}+\ldots+l_{i,k-1}+1)$-th degree-$i$ hand monomial of $P$. \\
We may visualize the key integer $\tau_{i,k}$ related to $\beta_{i+1,0}(P)$ in the Ferrers diagram of $P$ by coloring the corresponding $\displaystyle{\sum_{j=k}^{j=n_i}l_{i,j}}$ hand monomials in red and the $h_{i,k}$ \textsl{horizontal-border monomials} in blue, in the next example (Figure \ref{tau_14_2_fig}). 
\end{remark}
\begin{example}\label{tau_figure}
Let $T=(1,2,\ldots,12,13,12,6)$ and consider the partition $P$ of diagonal lengths $T$ given by $P=(14^2,12,11^2,10,7^2,5^3,4,3,1)$. 
$T$ is a two-block Hilbert function. 
The hook code of $P$ is $\mathfrak{Q}(P)=\left(\mathfrak{h}_{13},\mathfrak{h}_{14}\right)$. We have 
$\delta_{13}=1$, $\delta_{14}=6$, $\delta_{15}=6$, $\mathfrak{h}_{13}(P)=(2^3,1^1,0^2)$ and $\mathfrak{h}_{14}(P)=(6^1,4^2,2^3)$. 
\\
Note that $h_{13,1}=2=\delta_{13}+1$, $h_{14,1}=6<\delta_{14}+1$ and $\delta_{14}-\delta_{13}=5$.
\begin{itemize}
\item[$\bullet$] $\tau_{13,1}$, $\tau_{13,2}$ and $\tau_{13,3}$ are computed using the hook code block $\mathfrak{h}_{13}=(2^3,1^1,0^2)$. 
\begin{itemize}
\item $\tau_{13,1}=3+1+2-2=4$. We may visualise $\tau_{13,1}$ by coloring the sequence $(2^3,1^1,0^2)$:  
$(2^3,1^1,0^2)=(\textcolor{blue}{2}^{\textcolor{red}{3}},1^{\textcolor{red}{1}},0^{\textcolor{red}{2}})$; 
$$
\begin{array}{c}
\tau_{13,1}=\text{\textsl{\textcolor{red}{sum of red integers} minus the \textcolor{blue}{blue integer}}}
\\
\displaystyle{
\tau_{i,k}=\textcolor{red}{\sum_{j=k}^{j=n_i}l_{i,j}}-\textcolor{blue}{h_{i,k}}
}
\end{array}
.
$$
\item $\tau_{13,2}=1+2-1=2$. $\tau_{13,2}$ can be visualised by coloring the subsequence $(1^1,0^2)$ of $(2^3,1^1,0^2)$: 
$(2^3,1^1,0^2)=(2^3,\textcolor{blue}{1}^{\textcolor{red}{1}},0^{\textcolor{red}{2}})$, so $\tau_{13,2}=\text{\textsl{sum of red integers minus the blue integer}}$.
\item $\tau_{13,3}=2-0=2$, computed using $(2^3,1^1,0^2)=(2^3,1^1,\textcolor{blue}{0}^{\textcolor{red}{2}})$; 
$\tau_{13,3}=\text{\textsl{sum of red integers minus the blue integer}}$.
\end{itemize}
We then find that $\beta_{14,0}(P)=\max\left\{\delta_{14}-\delta_{13},0,\tau_{13,1}, \tau_{13,2},\tau_{13,3}\right\}=5$.
\item[$\bullet$] to compute $\tau_{14,1}$, $\tau_{14,2}$ and $\tau_{14,3}$ we can use the same coloring method on the hook code block $\mathfrak{h}_{14}=(6^1,4^2,2^3)$.
\begin{itemize}
\item $\tau_{14,1}=1+2+3-6=0$; this is $\tau_{14,1}$=\textsl{sum of red integers minus the blue integer}, using the coloring 
$(\textcolor{blue}{6}^{\textcolor{red}{1}},4^{\textcolor{red}{2}},2^{\textcolor{red}{3}})$. 
\item $\tau_{14,2}=\textcolor{red}{2+3}-\textcolor{blue}{4}=1$ and $\tau_{14,3}=\textcolor{red}{3}-\textcolor{blue}{2}=1$ using the colorings
$(6^1,\textcolor{blue}{4}^{\textcolor{red}{2}},2^{\textcolor{red}{3}})$, $(6^1,4^2,\textcolor{blue}{2}^{\textcolor{red}{3}})$.
\end{itemize}
We then find that $\beta_{15,0}(P)=\max\left\{\delta_{15}-\delta_{14},0,\tau_{14,1}, \tau_{14,2},\tau_{14,3}\right\}=1$.
\end{itemize}
We illustrate $\tau_{14,2}$ in Figure \ref{tau_14_2_fig} by coloring the degree-$i$ hand monomials and the degree-$i$ \textsl{horizontal-border monomials} as suggested in Remark \ref{interprete_tau_i_k}
\begin{figure}[!h]
\begin{center}
	\begin{tikzpicture}[scale=.65]
	\draw[dashed,very thin] 	(0,0)--(14,0)
						(0,-1)--(14,-1)
						(0,-2)--(12,-2)
						(0,-3)--(11,-3)
						(0,-4)--(11,-4)
						(0,-5)--(10,-5)
						(0,-6)--(7,-6)
						(0,-7)--(7,-7)
						(0,-8)--(5,-8)
						(0,-9)--(5,-9)
						(0,-10)--(5,-10)
						(0,-11)--(4,-11)
						(0,-12)--(3,-12)
						(0,-13)--(1,-13)
						(0,0)--(0,-14)
						(1,0)--(1,-13)
						(2,0)--(2,-13)
						(3,0)--(3,-12)
						(4,0)--(4,-11)
						(5,0)--(5,-8)
						(6,0)--(6,-8)
						(7,0)--(7,-6)
						(8,0)--(8,-6)
						(9,0)--(9,-6)
						(10,0)--(10,-5)
						(11,0)--(11,-3)
						(12,0)--(12,-2)
						(13,0)--(13,-2);
	\draw[thin,red,fill=red!8] (10,-4) rectangle (11,-5)
							(9,-5) rectangle (10,-6)
							(4,-10) rectangle (5,-11)
							(3,-11) rectangle (4,-12)
							(2,-12) rectangle (3,-13);
	\draw[thin, blue,fill=black!5,rounded corners=2mm] (8,-6) rectangle (9,-7)
					     	(6,-8) rectangle (7,-9)
						(1,-13) rectangle (2,-14)
					  	(0,-14) rectangle (1,-15);
	\draw[red] node[font=\tiny] at (10.5,-4.5) {$x^{10}y^4$}
			  node[font=\tiny] at (9.5,-5.5) {$x^9y^5$}
			  node[font=\tiny] at (4.5,-10.5) {$x^4y^{10}$}
			  node[font=\tiny] at (3.5,-11.5) {$x^3y^{11}$}
			  node[font=\tiny] at (2.5,-12.5) {$x^2y^{12}$};
	\draw[blue] node[font=\tiny] at (8.5,-6.5) {$x^8y^6$}
			    node[font=\tiny] at (6.5,-8.5) {$x^6y^8$}
			    node[font=\tiny] at (1.5,-13.5) {$xy^{13}$}
			    node[font=\tiny] at (0.5,-14.5) {$y^{14}$};		    
\draw[thick] (14,0)--(14,-2)--(12,-2)--(12,-3)--(11,-3)--(11,-5)--(10,-5)--(10,-6)--(7,-6)--(7,-8)--(5,-8)--(5,-11)--(4,-11)--(4,-12)--(3,-12)--(3,-13)--(1,-13)--(1,-14)--(0,-14);
\end{tikzpicture}
\caption{$P=(14^2,12,11^2,10,7^2,5^3,4,3,1)$ : $\tau_{14,2}=\textcolor{red}{2+3}-\textcolor{blue}{4}=1$ }
\label{tau_14_2_fig}
\end{center}
\end{figure}
\end{example}
\pagebreak

\begin{remark}\label{lower_bound_for_degree_i+1_gens}
Using the notation of Lemma \ref{kappaTlem} and Theorem \ref{extra_gens_in_given_degree} one has 
\begin{enumerate}[(i).]
\item $\left[\delta_{i+1}-\delta_i\right]^{+}\leq \max\left\{\delta_{i+1}-\delta_i,0,\tau_{i,k}\right\}_{1\leq k\leq n_i}=N_{i+1}-N_{i}$, so the number of degree $i+1$ generators of $I\in {\mathbb V}(E_P)$ is at least $\left[\delta_{i+1}-\delta_i\right]^{+}$ (this is statement i. of Lemma \ref{kappaTlem}).
\item Suppose $P$ is the partition associated to the generic cell of $\G_T$. Then for the $i$-th block  
$\mathfrak{h}_i=\left(h_{i,1}^{l_{i,1}}, \dots,  h_{i,k}^{l_{i,k}}, \dots , h_{i,n_{i}}^{l_{i,n_{i}}}\right)$ of the hook code $\mathfrak{Q}(P)$ of $P$, we have $n_i=1$, $h_{i,1}=\delta_i+1$ and $l_{i,1}=\delta_{i+1}$, that is $\mathfrak{h}_i=\left(\delta_i+1\right)^{\delta_{i+1}}$. In this case, $g_{i,1}=\delta_{i+1}$, $r_{i,1}=\delta_i$ and formula \ref{beta0eq} of Theorem \ref{extra_gens_in_given_degree} 
gives $N_{i+1}-N_{i}=\left[\delta_{i+1}-\delta_i\right]^{+}$ (this is statement ii. of Lemma \ref{kappaTlem}).
\item An immediate corollary of Theorem \ref{componentTheorem} in the single-block case recovers Theorem \ref{kappathm}. More precisely, for a single-block partition $P_i$ with Hilbert function $T_i=(1, \cdots, d-1,d,t_d,0)$ where $d=\delta_i+\delta_{i+1}$ and $t_d=\delta_{i+1}$. Denote the difference-one hook code of $P$, $\mathfrak{Q}(P_i)$, by $\mathfrak{h}_i$ as above which satisfies 
$$
\delta_i+1\geq h_{i,1}> \dots > h_{i,n_{i}}\geq 0 \quad \text{and} \quad \displaystyle{\sum_{k=1}^{k=n_i}l_{i,k}=\delta_{i+1}}.
$$ Then $\kappa(P_i)=\delta_i+1+\max\left\{0,g_{i,k}-r_{i,k} \right\}_{1\leq k \leq n_i}$.

\end{enumerate}
\end{remark}
We can now state and prove the main theorem of this section.
\begin{theorem}[Decomposition of $\kappa(P)$ into components]\label{componentTheorem}
Let $P$ be a partition of lengths $T=\left(1,2,\dots ,d,t_d,\dots ,t_{\sf j},0\right)$.  Then
\begin{equation}\label{kappaDeompositionEq}
\kappa(P) = \sum\limits_{i=d}^{\sf j}\kappa(P_i)-(t_d-t_{\sf j})-({\sf j}-d).
\end{equation}
Moreover, a minimal set of generators for a generic ideal in the cell $\mathbb{V}(E_P)$ includes $d+1-t_d$ generators of degree $d$ and  $\kappa(P_i)-(\delta_i+1)$ generators of degree $i+1$, for $i\in [d, {\sf j}]$.
\end{theorem}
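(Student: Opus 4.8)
The plan is to prove the ``moreover'' per-degree generator count first, and then to deduce the closed formula \eqref{kappaDeompositionEq} from it by a telescoping sum. Granting that a minimal generating set of a generic $I\in\mathbb V(E_P)$ has $d+1-t_d=\delta_d+1$ generators in degree $d$ and $\kappa(P_i)-(\delta_i+1)$ generators in degree $i+1$ for each $i\in[d,{\sf j}]$, summing over degrees gives
\[
\kappa(P)=(\delta_d+1)+\sum_{i=d}^{\sf j}\bigl(\kappa(P_i)-(\delta_i+1)\bigr)
=\sum_{i=d}^{\sf j}\kappa(P_i)+(\delta_d+1)-\sum_{i=d}^{\sf j}(\delta_i+1).
\]
Since $t_{d-1}=d$, the telescoping identity $\sum_{i=d}^{\sf j}\delta_i=t_{d-1}-t_{\sf j}=d-t_{\sf j}$ together with $\delta_d=d-t_d$ yields $(\delta_d+1)-\sum_{i=d}^{\sf j}(\delta_i+1)=-(t_d-t_{\sf j})-({\sf j}-d)$, which is exactly \eqref{kappaDeompositionEq}. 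So the whole content lies in the per-degree count.

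For that count I would use the isomorphism $\pi\colon\mathbb V(E_P)\xrightarrow{\ \sim\ }\prod_{i}\mathbb V(E_{P_i})$ of Theorem \ref{projectionthm}: since $\pi$ is an isomorphism of affine spaces, a generic $I$ corresponds to a tuple of ideals $I_{V_i}$ that are generic in the component cells $\mathbb V(E_{P_i})$. The $\delta_d+1=d+1-t_d$ degree-$d$ corner-monomials of $E_P$ are forced generators, because $d$ is the order of $T$ and nothing of lower degree can kick them off; these match the lower block of the single-block component $P_d$. For the higher degrees, the key tool is the (unnamed) Lemma following Definition \ref{Pidef1}: the bijection $s_i$ carries the degree-$(i+1)$ corner-monomials and the degree-$(i+1)$ relations of $E_P$ onto the degree-$(d_i+1)$ corner-monomials and relations of $E_{P_i}$. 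I would prove that a degree-$(i+1)$ corner of $E_P$ is kicked off for the generic $I$ if and only if the matched degree-$(d_i+1)$ corner of $E_{P_i}$ is kicked off for the generic element of $\mathbb V(E_{P_i})$. Granting this equivalence, the number of surviving degree-$(i+1)$ generators of $P$ equals the number of surviving degree-$(d_i+1)$ generators of $P_i$, which by the single-block analysis (Theorem \ref{kappathm}, with $b_1(E_{P_i})-\kappa(P_i)$ counting kicked-off corners as in Lemma \ref{countcornerlem}) is $\kappa(P_i)-s_i=\kappa(P_i)-(\delta_i+1)$.

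The heart of the argument, and the step I expect to be the main obstacle, is this kick-off equivalence. By Lemma \ref{corner-kick-off-lemma} and Remark \ref{selected-corner-kick-off}, whether a degree-$(i+1)$ corner of $E_P$ is kicked off is decided by a chain of relations $yK_j-xK_{j+1}=0$ among degree-$i$ border monomials that are leading terms of standard-basis elements of $I$, through the nonvanishing of the associated coefficients $\lambda$. I would verify two points. First, the degree-$(i+1)$ relations of $E_P$ available to kick off a given corner are precisely the $s_i$-correspondents of the degree-$(d_i+1)$ relations of $E_{P_i}$ available for the matched corner; here I must check, using the explicit construction of $V_{i1}\cup V_{i2}$ and Proposition \ref{rm_rows_cols_P_get_P_i}, that no degree-$i$ standard-basis element coming from outside the $i$-th component can enter such a relation, so that the kick-off is confined to a single component. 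Second, under the isomorphism $\pi$ the coefficients governing these relations for a generic $I$ are generic exactly when those read off from $I_{V_i}$ are, so genericity of the kick-off transfers between $P$ and $P_i$.

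Once both points are established, the single-block count applies verbatim inside each component cell $\mathbb V(E_{P_i})$, yielding $\kappa(P_i)-(\delta_i+1)$ surviving degree-$(i+1)$ generators of $P$ for each $i$, together with the $d+1-t_d$ forced degree-$d$ generators. This completes the per-degree count, and hence, via the telescoping reduction above, the formula \eqref{kappaDeompositionEq}.
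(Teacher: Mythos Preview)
Your overall architecture---prove the per-degree generator count and then telescope---matches the paper's, and your telescoping computation is correct. But the route you propose to the per-degree count is genuinely different, and the step you flag as the main obstacle is a real gap, not just a detail to fill in.

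The paper does \emph{not} argue the per-degree count by transferring the kick-off problem through the isomorphism $\pi$. It works entirely inside $P$: it introduces sequences $r_{i,k}$ and $g_{i,k}$ (Definition~\ref{degree_i+1_rel_gen_sequences}, Lemma~\ref{degree_i+1_rel_gen_formulas}) counting the degree-$(i+1)$ relations and corner-monomials of $E_P$ lying below successive degree-$i$ hands, and then proves by a direct chain argument with standard-basis techniques (Theorem~\ref{extra_gens_in_given_degree}) that the number of surviving degree-$(i+1)$ generators for a generic $I\in\mathbb V(E_P)$ is $\max_k\{0,\,g_{i,k}-r_{i,k}\}=\max_k\{\delta_{i+1}-\delta_i,\,0,\,\tau_{i,k}\}$. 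Only afterwards is this matched \emph{numerically} to $\kappa(P_i)-(\delta_i+1)$ via the single-block formula of Theorem~\ref{kappathm} (Corollary~\ref{multicountcor}). The isomorphism $\pi$ plays no role; the link to $P_i$ is purely through the common hook-code block $\mathfrak h_i$.

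Your proposed transfer via $\pi$ is conceptually cleaner, but the ``kick-off equivalence'' is not established by what you write. The map $\pi_i$ is defined only in degree $i$: it projects $I\cap W_i$ onto $V_i$, discarding the coordinates along $V_{i3}=(C_P)_i\setminus V_{i2}$. Whether a degree-$(i+1)$ corner $M$ of $E_P$ is kicked off is the condition $M\in R_1\cdot I_i$, and this involves the \emph{full} $I_i\subset R_i$, including the $V_{i3}$-tails of the degree-$i$ standard-basis elements and also the degree-$i$ multiples of lower-degree generators of $I$. The lemma after Lemma~\ref{hookP(i)lem} only matches the \emph{monomial} corners and relations of $E_P$ with those of $E_{P_i}$; it says nothing about ideal membership for a nontrivial $I$. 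That $\pi$ is an isomorphism of affine spaces guarantees generic $I$ corresponds to a generic tuple in $\prod_i\mathbb V(E_{P_i})$, but it does \emph{not} guarantee that the specific open condition ``$M\in R_1\cdot I_i$'' pulls back to ``the $s_i$-image of $M$ lies in the degree-$(d_i+1)$ part of the ideal generated by $\tilde s_i(I_{V_i})$''. Your second verification point simply asserts this. To close the gap you would have to show that the dropped $V_{i3}$-tails and the lower-degree contributions do not change the generic count---which, once you unpack it, is essentially the paper's chain computation carried out in $P$ and then recognized in $P_i$.
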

\begin{proof}
Let  $I$ be a generic element of the cell ${\mathbb V}(E_P)$, ${\mathcal G}(I)$ a minimal set of generators of $I$, and $\beta_{i,0}(P)$ the number of degree-$i$ generators of $I$. From Theorem \ref{extra_gens_in_given_degree} and Corallary \ref{relate_kappa_P_i-to-kappa_P}, we have $\beta_{i+1,0}(P)=N_{i+1}-N_i=\kappa(P_i)-(\delta_i +1)$ for all $i\in [d,{\sf j}]$. By definition, $\kappa(P)=\displaystyle{\#{\mathcal G}(I)=N_{{\sf j}+1}=N_d+\sum_{i=d+1}^{i={\sf j}+1}\beta_{i,0}(P)}$. Since $\beta_{i+1,0}(P)=\kappa(P_i)-(\delta_i +1)$, we have $\kappa(P)=\displaystyle{N_d+\sum_{i=d}^{i={\sf j}}\left(\kappa(P_i)-(\delta_i +1)\right)}$. Clearly, $N_d=d+1-t_d$. Recall that by definition, $\delta_i=t_{i-1}-t_i$. We then have 
\begin{align*}
\kappa(P)&=N_d+\sum_{i=d}^{i={\sf j}}\left(\kappa(P_i)-(\delta_i +1)\right)\\
&= d+1-t_d + \sum_{i=d}^{i={\sf j}}{\kappa(P_i)} - \sum_{i=d}^{i={\sf j}}{(t_{i-1}-t_i)}- \sum_{i=d}^{i={\sf j}}1\\
&=  \sum_{i=d}^{i={\sf j}}{\kappa(P_i)} -(t_d-t_{\sf j})-({\sf j}-d).
\end{align*}
\end{proof}
\begin{remark}\label{empty-component-contribution}
Note that if $t_i=t_{i+1}$, then it is clear that $N_{i+1}-N_{i}=0$. Also, we found  (Proposition \ref{kappa-of-empty-bloc}) that when $t_i=t_{i+1}$, we have 
$\kappa(P_i)=\delta_i +1$. So the empty hook blocs contribute to zero in the sum $\kappa(P)=\displaystyle{N_d+\sum_{i=d}^{i={\sf j}}\left(\kappa(P_i)-(\delta_i +1)\right)}$ that computes $\kappa(P)$ in Theorem \ref{componentTheorem}.
\end{remark}
\medskip

\begin{proposition}\label{E_P_Betti_sequence}
Let $P$ be a partition of diagonal lengths $T=(1, 2, \cdots , d-1, t_d, \cdots , t_{\sf j}, 0)$ and suppose 
$\mathfrak{h}_i=\left(h_{i,1}^{l_{i,1}}, \dots,  h_{i,k}^{l_{i,k}}, \dots , h_{i,n_{i}}^{l_{i,n_{i}}}\right)$ is the $i$-th block 
 of the hook code $\mathfrak{Q}(P)$ of $P$.  \\
 Denote by $b_{i+1}(E_P)$ the number of degree $i+1$ corner-monomials (generators) of $E_P$. Then we have 
 $$
 \begin{array}{l}
 b_d=d+1-t_d,\\
 b_{i+1}(E_P)=\delta_{i+1}-n_i+\max\{1-h_{i,n_i},0\}+\max\{h_{i,1}-\delta_i,0\}.

 \end{array}
 $$
\end{proposition}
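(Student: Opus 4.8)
The plan is to recognize the proposition as a direct reading-off of the degree $i+1$ corner count from the formulas already established in Lemma \ref{degree_i+1_rel_gen_formulas}, together with an elementary count in the initial degree. The crucial observation is that, by Definition \ref{degree_i+1_rel_gen_sequences}, the set $G_{i,1}$ is \emph{defined} to be exactly the set of all degree $i+1$ corner-monomials of $E_P$; hence $b_{i+1}(E_P)=g_{i,1}$, with nothing further needed to identify the quantity. Thus the entire content of the second displayed formula is merely the $k=1$ instance of the formula for $g_{i,k}$, read through this identification (including the degree shift: the $i$-th hook block $\mathfrak{h}_i$ governs the degree $i+1$ corner-monomials).

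First I would invoke the compact expression for $g_{i,k}$ recorded in Remark \ref{compact_formulas_for_gik_rik},
\[
g_{i,k}=\sum_{j=k}^{n_i}l_{i,j}-(n_i+1-k)+\max\{1-h_{i,n_i},0\}+\max\{h_{i,k}-\delta_i,0\},
\]
and specialize to $k=1$. Using $\sum_{j=1}^{n_i}l_{i,j}=\delta_{i+1}$ (the total length of the $i$-th hook block) this collapses to
\[
b_{i+1}(E_P)=g_{i,1}=\delta_{i+1}-n_i+\max\{1-h_{i,n_i},0\}+\max\{h_{i,1}-\delta_i,0\},
\]
which is precisely the asserted formula. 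If one prefers not to quote the compact form, the same identity follows by checking the four sign cases of Lemma \ref{degree_i+1_rel_gen_formulas}: the term $\max\{1-h_{i,n_i},0\}$ equals $1$ exactly when $h_{i,n_i}=0$ and $0$ when $h_{i,n_i}>0$, while $\max\{h_{i,1}-\delta_i,0\}$ equals $1$ exactly when $h_{i,1}=\delta_i+1$ (recall the constraint $h_{i,1}\le \delta_i+1$) and $0$ otherwise; matching these two indicator contributions against the values $g_{i,1}=\delta_{i+1}-(n_i-1),\ \delta_{i+1}-n_i,\ \delta_{i+1}-(n_i-2),\ \delta_{i+1}-(n_i-1)$ of the Lemma recovers all four cases.

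For the initial-degree term, I would argue directly: since $E_P$ has order $d=\nu(T)$, there are no generators in degree less than $d$, so every degree-$d$ monomial of $E_P$ is automatically a minimal generator. As $\dim_{\sf k}R_d=d+1$ and $\dim_{\sf k}(R/E_P)_d=t_d$, there are exactly $d+1-t_d$ degree-$d$ monomials in $E_P$, giving $b_d=d+1-t_d$ (consistent with $N_d$ in Theorem \ref{extra_gens_in_given_degree}). There is no genuine obstacle here; the only point requiring care is the bookkeeping of the degree shift and the correct reading of the two $\max$ terms as indicator functions, and both are already handled by the definitions. The proposition is therefore a corollary of Lemma \ref{degree_i+1_rel_gen_formulas} and Remark \ref{compact_formulas_for_gik_rik}.
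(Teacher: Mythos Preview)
Your proposal is correct and follows essentially the same approach as the paper: the paper's proof also identifies $b_{i+1}(E_P)=g_{i,1}$ via Definition \ref{degree_i+1_rel_gen_sequences} and then invokes Lemma \ref{degree_i+1_rel_gen_formulas} and Remark \ref{compact_formulas_for_gik_rik}, together with the elementary count $b_d=d+1-t_d$. Your write-up is in fact more detailed than the paper's, which simply states that the result follows directly from those ingredients.
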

\begin{proof}
The number of degree $d$ corner-monomials of $E_P$ is of course $d+1-t_d$. By Definition \ref{degree_i+1_rel_gen_sequences} we have $b_{i+1}(E_P)=g_{i,1}$. The proof of the Proposition then follows directly from Lemma \ref{degree_i+1_rel_gen_formulas}  and Remark \ref{compact_formulas_for_gik_rik}. 
\end{proof}

\par
Recall from Definition \ref{special-partition-definition} that a partition $P$ of diagonal lengths $T$ is special if $\kappa(P)\neq \kappa(T)$. In other words, $P$ is special if $\kappa(P)$ does not have the minimum value $\kappa(T)$ possible for partitions of diagonal lengths $T$, from Equation \ref{kappaTeq}. The following immediate corollary of  Theorem~\ref{componentTheorem} gives a necessary and sufficient condition for a partition $P$ to be special. Recall that Corollary \ref{special1cor} specifies when  a single-block partition is special.

\begin{theorem}
[Component Theorem for $P$ special]\label{componentthm}
Assume that $T$ is a Hilbert function of height $d$ and socle degree ${\sf j}$,
 and that the partition $P$ of diagonal lengths $T$ decomposes into single-block partitions $P_d, \dots
, P_{\sf j}$. Then $P$ is special if and only if  $P_i$ is special, for some $i\in [d, {\sf j}]$.
\end{theorem}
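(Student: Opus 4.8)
The plan is to read this off directly from the additive decomposition of $\kappa$ proved in Theorem~\ref{componentTheorem} (equivalently Corollary~\ref{multicountcor}), by comparing $P$ against the generic partition of $\G_T$. First I would fix the partition $P^{0}\in\mathcal P(T)$ indexing the dense open cell of $\G_T$: by the cellular decomposition (Theorem~\ref{GTdecompthm}) this is the unique partition of maximal cell-dimension, whose hook code is the full box $\mathfrak Q(P^{0})=\mathfrak B(T)$ (as in Example~\ref{6333ex}, $P^{0}=T^\vee$). By the definition of $\kappa(T)$ we have $\kappa(P^{0})=\kappa(T)$. Since each block then satisfies $\mathfrak h_i(P^{0})=\mathfrak B_i(T)=(\delta_i+1)^{\delta_{i+1}}$, Lemma~\ref{hookP(i)lem} identifies the component $(P^{0})_i$ as the partition with full-box hook code inside $\mathfrak B_i(T)$, i.e. the partition indexing the dense open cell of the single-block family $\G_{T_i}$; hence $\kappa\big((P^{0})_i\big)=\kappa(T_i)$.

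Next I would apply the decomposition formula \eqref{kappaDeompositionEq} to both $P$ and $P^{0}$. Because the correction terms $-(t_d-t_{\sf j})-({\sf j}-d)$ depend only on $T$, they cancel upon subtraction, so
\[
\kappa(P)-\kappa(T)=\sum_{i=d}^{\sf j}\big(\kappa(P_i)-\kappa(T_i)\big).
\]
Then I would invoke single-block minimality: each component $P_i$ has diagonal lengths $T_i$, and $\kappa(T_i)$ is by definition the minimum number of generators over $\mathcal P(T_i)$, whence $\kappa(P_i)\ge\kappa(T_i)$ (this is the bound $\kappa(T_i)\le\kappa(P_i)\le s_i+t_{d_i}$ recorded just before Theorem~\ref{countingthm}). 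Thus every summand above is non-negative, giving $\kappa(P)\ge\kappa(T)$ with equality precisely when $\kappa(P_i)=\kappa(T_i)$ for all $i$.

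Finally I would translate this into the statement about specialness. Since $\kappa(P)\ge\kappa(T)$ always, $P$ is special, i.e. $\kappa(P)\ne\kappa(T)$, if and only if $\kappa(P)>\kappa(T)$, i.e. if and only if some summand $\kappa(P_i)-\kappa(T_i)$ is strictly positive. As $\kappa(P_i)\ge\kappa(T_i)$, a positive summand is the same as $\kappa(P_i)\ne\kappa(T_i)$, which by Definition~\ref{special-partition-definition} means that the single-block component $P_i$ is special. This yields the desired equivalence: $P$ is special if and only if $P_i$ is special for some $i\in[d,{\sf j}]$.

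The one point demanding care — the main, though mild, obstacle — is the identification $\kappa\big((P^{0})_i\big)=\kappa(T_i)$, namely that the components of the generic partition are themselves generic within their single blocks. This is exactly where the hook-code functoriality of Lemma~\ref{hookP(i)lem} together with the product structure of Theorem~\ref{projectionthm} is used; alternatively one could bypass $P^{0}$ entirely and verify the equality $\kappa(T)=\sum_i\kappa(T_i)-(t_d-t_{\sf j})-({\sf j}-d)$ by a direct computation, matching the generic count of Lemma~\ref{kappaTlem}(ii) against the single-block value \eqref{kappaTsingleeq} summed over the $T_i$ (using $s_i=\delta_i+1$ and $\delta(T_i)=[\delta_{i+1}-\delta_i]^+$).
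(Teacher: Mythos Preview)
Your proposal is correct and follows essentially the same approach as the paper: both deduce the result from the additive decomposition $\kappa(P)=\sum_i\kappa(P_i)-(t_d-t_{\sf j})-({\sf j}-d)$ of Theorem~\ref{componentTheorem}, using that each $\kappa(P_i)\ge\kappa(T_i)$ so the sum attains its minimum $\kappa(T)$ exactly when every component does. Your version is simply more explicit than the paper's one-line proof, spelling out why the correction term is $T$-only and why the components of the generic partition $P^0$ are themselves generic in each $\G_{T_i}$---a point the paper takes for granted.
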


\begin{proof}
By Theorem~\ref{componentTheorem}, the value of $\kappa(P)$ is minimum - equal to $\kappa(T)$ - if and only if $\kappa(P_i)$ is minimum for all $i\in [d,{\sf j}]$. Thus $P$ is non-special if and only if  at least one component $P_i$ is non-special for an integer $i\in [d,{\sf j}]$.
\end{proof}
\begin{figure}[t!]
\centering
\scalebox{0.6}{\begin{ytableau}
\textcolor{gray}{\bullet} & &&& \textcolor{gray}{\bullet} &\textcolor{gray}{\bullet} &&& \textcolor{gray}{\bullet}&&&&&*(black! 20)\textcolor{gray}{\bullet} &*(black! 50)\cr
$ $ & &&&&&&&&&&\cr
$ $ &&&\circ &&&\circ &\circ &&&&*(black! 20)\cr
\textcolor{gray}{\bullet} &&&&\textcolor{gray}{\bullet} & \textcolor{gray}{\bullet}&&&\textcolor{gray}{\bullet} && *(black! 20)&*(black! 50)\cr
$ $ & \bullet &   &   &   &   &   &   &   &*(black! 20)\bullet &*(black! 50) &*(black! 100)\cr
$ $  &\bullet&&& & &&&*(black! 20) &*(black! 50)\bullet&*(black! 100)\cr
$ $& &&&&&\cr
$ $& &&&&\cr
$ $& &&  \circ &&*(black! 20)\cr
$ $& && \circ  &*(black! 20)\cr
$ $& &\cr
$ $& &*(black! 20)\cr
\textcolor{gray}{\bullet}& *(black! 20)&*(black! 50)\cr
*(black! 20)&*(black! 50)\bullet&*(black! 100)\cr
\end{ytableau}}\caption{Ferrers diagram of Example \ref{gensEx}. Each labeled box represents a hook corner, it is labeled by a $\circ$ if its hand-degree is 13, with a \textcolor{gray}{$\bullet$} if the hand-degree is 14, and with a dark $\bullet$ when the hand-degree is 15.}\label{3blockdiag}
\end{figure}

\begin{example}\label{gensEx}
Consider the partition $P=(15, 12^4, 11, 7, 6^2, 5, 3^4)$ from Example \ref{1stgensEx}. Then $P$ has diagonal lengths $T=\left(1,2,\dots ,13,10_{13},6_{14},3_{15},0\right)$ and hook code (see Figure \ref{3blockdiag}) $$\mathfrak{Q}(P)=\left((3,1^2, 0)_{13},(5,4,1)_{14}, (2^2,1)_{15}\right).$$ Then, as we saw in Example \ref{1stgensEx}, $P$ can be decomposed into the following three single-block partitions. Partition $P_{13}=(7^2, 5, 4^2, 3, 1^2)$ of diagonal lengths $T_{13}=(1, \dots, 7, 4,0)$ and hook code $\mathfrak{Q}(P_{13})=(3, 1^2, 0)$,
partition $P_{14}=(8, 6^2,4, 3, 2^2)$ of diagonal lengths $T_{14}=(1, \dots, 7, 3,0)$ and hook code $\mathfrak{Q}(P_{14})=(5, 4, 1)$,
and partition $P_{15}=(6, 5^2, 4, 2^2)$ of diagonal lengths $T_{15}=(1, \dots, 6, 3,0)$ and hook code $\mathfrak{Q}(P_{15})=(2^2,1)$.

By Theorem~\ref{kappathm}, we have
\[
\kappa(P_{13})=6,\quad \kappa(P_{14})=5, \mbox{ and }
\kappa(P_{15})=5.
\]

Thus, by Theorem~\ref{componentTheorem}, we have $$\kappa(P)=6+5+5-(10-3)-2=7.$$

Let $I$ be a generic ideal in the cell $\mathbb{V}(E_P)$. Then a minimal set of generators for $I$ consists of seven generators. Of these seven generators, four have degree 13, two have degree 14, and one has degree 15. 
\end{example}

\subsubsection{Elementary and non-elementary Hilbert functions.}
A key aspect to understanding the family $\G_T$ is that, when the sequence $T$ of Equation \ref{Teq}
has a constant subsequence $(s,s,\ldots)$ with $s<d$, the order of $T$, then $G_T$ splits into the direct product
of simpler parameter spaces. We explain here briefly consequences for our analysis of generators for ideals in Jordan cells.\par
We say that a sequence $T$ satisfying Equation \eqref{Teq} is \emph{elementary} if there is no integer $i\in [d,{\sf j}]$ such that
$t_i=t_{i+1}<d$ \cite[\S 4Ai]{IY}; then we also say that $\G_T$ is elementary. It is well known (see \cite[\S 4B]{Ia}, \cite[Lemma 2.2]{Ia2}) that when a Hilbert function $T=H(R/I)$ satisfies $t_i=t_{i+1}=s<d$ then there is a form $f\in R_s$ such that 
\begin{equation}\label{Tconsteq} I_i=fR_{i-s} \text { and }I_{i+1}=f\cdot R_{i+1-s}.
\end{equation}
  It follows that 
$f| I_u$ for $u\le i+1$. This is usually shown using the properties of $\tau(V)=\dim_{\sf k} R_1V-\dim_{\sf k}V $ for vector subspaces $V\subset R_i$: this integer is the number of generators of an ``ancestor ideal'' $I=(V)\oplus_{u=1}^i V:R_u$, and $\tau (I_i)=1$ when $t_i=t_{i+1}$ (ibid.).\par
We will define implicitly in the next Theorem ``elementary factors'' $T(i)$ of Hilbert sequences $T$ which have constant subsequences of height $s<d$. These factors have no relation with the single-block components $T_i$ for each $T$, defined in Equation \eqref{HFdecomposition}, and a major topic for us. In fact 
if $T$ splits into elementary components $T(i)$ they are not usually single-block.
\begin{lemma}\label{elemlem}\cite[Lemma 4.2]{IY} There is a decomposition of $\G_T$ as a product 
$$\G_T=\prod_k G_{T(k)} \text { for $T(k)$ elementary}.$$
\end{lemma}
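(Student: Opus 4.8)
The plan is to induct on the number of degrees $i\in[d,{\sf j}]$ at which $T$ has a plateau below the top, that is $t_i=t_{i+1}=s<d$; if there are none, then $T$ is already elementary and there is nothing to prove. For the inductive step I would peel off one factor at the least such degree $i$. There $\delta_{i+1}=t_i-t_{i+1}=0$, and I split $T$ into a lower sequence $T'$ recording the boxes $\mathfrak B_d(T),\dots,\mathfrak B_{i-1}(T)$ (equivalently the differences $\delta_d,\dots,\delta_i$) and an upper sequence $T''$ recording $\mathfrak B_{i+1}(T),\dots,\mathfrak B_{\sf j}(T)$, each renormalized to a genuine Hilbert function of the form \eqref{Teq}. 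By minimality of $i$, the lower piece $T'$ has no plateau below its own order, hence is elementary; the upper piece $T''$ has strictly fewer sub-top plateaus, so the induction closes once the geometric splitting $\G_T\cong \G_{T'}\times\G_{T''}$ is established.

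To produce that splitting I would use the common factor. Since $t_i=t_{i+1}=s<d$, the $\tau$-computation recalled around \eqref{Tconsteq} shows every $I\in\G_T$ is divisible by a form $f=f_I\in R_s$, unique up to scalar, with $I_u=f\,R_{u-s}$ at the plateau and $f\mid I_u$ for all $u\le i+1$. The lower factor is the colon ideal $I\mapsto(I:f)$, which lands in $\G_{T'}$: dividing out $f$ turns the $f$-divisible part of $I$ in degrees $\le i+1$ into an ideal of Hilbert function exactly $T'$. The upper factor is the complementary truncation recording $I$ in degrees $\ge i+1$ modulo $f$-multiples, which lies in $\G_{T''}$. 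Taking the product gives a morphism $\G_T\to\G_{T'}\times\G_{T''}$, and I would construct its inverse by the reassembly $I=f\cdot(\text{lower ideal})+(\text{upper ideal})$, using standard-basis techniques in the spirit of \cite{brian-gal} and the trivial-fibration colon arguments of \cite{jy-1,Y0}. The explicit reassembly provides a regular inverse, so the morphism is an isomorphism; smoothness and irreducibility (Proposition \ref{GTdimthm}) serve as a cross-check, reducing verification to bijectivity on closed points. As a numerical consistency check, the formula $\dim\G_T=\sum_{j\ge d}(\delta_j+1)\delta_{j+1}$ splits as a sum over $j<i$ and $j>i$ because the term at $j=i$ is $(\delta_i+1)\delta_{i+1}=0$, matching $\dim\G_{T'}+\dim\G_{T''}$.

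The main obstacle is algebraicity: one must check that $f_I$ varies regularly with $I$ (so that $I\mapsto(I:f_I)$ and the upper truncation are genuine morphisms) and, conversely, that the reassembly is regular, i.e. that no minimal generator or first syzygy of $I$ straddles the plateau. This non-straddling is exactly where $s<d$ enters: since all of $I_{i+1}$ equals $f\,R_{i+1-s}$ while new, non-$f$-divisible generators can appear only in degrees $\ge i+2$, the Hilbert--Burch resolution breaks cleanly at the plateau. By contrast, at a plateau of height $s=d$ the common factor has degree equal to the order, $(I:f)$ is the whole ring in positive degree, and the lower factor degenerates to a point, so no genuine splitting occurs — consistent with such plateaus not counting toward non-elementarity. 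One can corroborate the product structure through the cell decomposition $\G_T=\bigcup_{P\in\mathcal P(T)}\mathbb V(E_P)$: the empty box $\mathfrak B_i(T)$ forces $\mathcal P(T)\cong\mathcal P(T')\times\mathcal P(T'')$ via Theorem \ref{PTtoQTthm}, and each cell factors accordingly by grouping the single-block components of Theorem \ref{projectionthm} below and above the plateau, which records that the isomorphism respects the stratifications. Iterating over the finitely many sub-top plateaus then completes the proof.
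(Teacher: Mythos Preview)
Your approach is essentially the same as the paper's: both split at a sub-top plateau using the common factor $f$ of degree $s$, sending $I$ to the pair consisting of the colon $(I:f)$ (your $T'$, the paper's $T(2)$) and the ideal $(f,I)$ (your $T''$, the paper's $T(1)$), and then iterate. The paper's proof is a brief sketch citing \cite{IY}, while you supply more detail on algebraicity of $f_I$, the dimension check via the vanishing summand $(\delta_i+1)\delta_{i+1}=0$, and compatibility with the cell decomposition; these are welcome elaborations but not a different strategy. One small point: your description of the upper factor as ``$I$ in degrees $\ge i+1$ modulo $f$-multiples'' is a bit imprecise---the paper's cleaner formulation is simply $I(1)=(f,I)$, whose Hilbert function $T(1)=(1,\dots,s,s,\dots,s,t_{i+k+1},\dots,t_{\sf j})$ has its plateau at height equal to its own order and so does not obstruct elementarity.
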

\begin{proof} Assume there is a single maximal consecutive subsequence
$t_i=t_{i+1}=\cdots =t_{i+k}=s$ with $k\ge 1$ and $s<d$.  Then consider $T(1)=(1,2,\ldots,s_{s-1},s,\ldots  ,s_{i+k},t_{i+k+1},\ldots ,t_{\sf j})$, and $T(2)$ defined by $T(2)_{u}=t_{u+s}-s$ for $u\le i-s$.  Let $p_I\in \G_T$ be a point parametrizing the graded ideal $I$ such that $A=R/I$ satisfies $H(A)=T$. Then we let $I(1)=(f_s,I)$. We have $I_{u+s}=f_sV_{u}$ for $0\le u\le i-s$: we define $I(2)_u=V_u$ for $u\in [0, i-s]$. Then the pair $(I(1),I(2))$ determines $I$ and conversely. This proves the Lemma for $k=2$, it is straightfoward to extend it to $k\ge 2$.
\end{proof}

\begin{remark}\label{kappa_cell1xcell_2_rem}\par
Let $T=(1, 2, \cdots , d, t_d, \cdots , t_{\sf j}, 0)$ be a Hilbert function as in Equation \ref{Teq} and $P$ a partition of diagonal lengths $T$. 
Suppose there is an integer $i\in\left[d,{\sf j}-1 \right]$ such that $t_i=t_{i+1}=\cdots =t_{i+k}$ with $k\geq 1$ and $s<d$.
Let  $I$ be a generic element of the cell ${\mathbb V}(E_P)$ and ${\mathcal G}(I)$ a minimal set of generators of $I$. Let 
${\mathcal G}(I)_1=\left\{ f\in {\mathcal G}(I), \text{degree}(f) \geq i+1 \right\}$ and ${\mathcal G}(I)_2=\left\{ f\in {\mathcal G}(I), \text{degree}(f) \leq i \right\}$.  Setting $m_1=\left|{\mathcal G}(I)_1\right|$ and $m_2=\left|{\mathcal G}(I)_2\right|$, we get  $\kappa(P)=m_1+m_2$. We know from Equation \eqref{Tconsteq} that there is a degree $s$ form $f_s$ such that $f_s$ divides each of the elements $f_1, \ldots , f_{m_2}$ of ${\mathcal G}(I)_2$. 
Let $I(1)=(f_s,I)$ and $I(2)=(I:f_s)$. Then $I(1)$ is a generic element of a cell ${\mathbb V}(E_{P(1)})$ and $I(2)$ is a generic element of a 
cell ${\mathbb V}(E_{P(2)})$. It is clear that $\kappa({P(1)})=m_1+1$ and $\kappa({P(2)})=m_2$, so $\kappa(P)=\kappa({P(1)})+\kappa({P(2)})-1$.
\end{remark}

\begin{proposition}\label{kappa_prod_elem_cells_prop}
Suppose that the variety $\G_T$ decomposes as $\displaystyle{\G_T=\prod_{k=1}^{k=r} G_{T(k)}}$ with each $T(k)$ elementary. 
Then any cell ${\mathbb V}(E_P)$ of $\G_T$ decomposes as 
$$\displaystyle{{\mathbb V}(E_P)=\prod_{k=1}^{k=r} {\mathbb V}(E_{P(k)})} \text { for $P(k)$ a partition of diagonal lengths $T(k)$}.$$ 
Also $\displaystyle{\kappa(P)=\sum_{k=1}^{k=r}\kappa(P(k))-r+1} $.
\end{proposition}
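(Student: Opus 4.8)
The plan is to bootstrap from the single-plateau analysis already carried out in Remark~\ref{kappa_cell1xcell_2_rem} and Lemma~\ref{elemlem}, and then run a strong induction on the number $r$ of elementary factors.

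For the decomposition of the cell, I would first recall that Lemma~\ref{elemlem} produces the product decomposition $\G_T=\prod_{k} G_{T(k)}$ via the splitting $I\mapsto (I(1),I(2))=\bigl((f_s,I),\,(I:f_s)\bigr)$ at a maximal constant subsequence $t_i=\cdots=t_{i+k}=s<d$, where $f_s$ is the degree-$s$ form of Equation~\eqref{Tconsteq}. The point to verify is that this splitting is compatible with passage to initial monomial ideals in the $y$-direction (Definition~\ref{celldef}): it sends a monomial ideal to a pair of monomial ideals, and more generally $\mathrm{in}(I(1))$ and $\mathrm{in}(I(2))$ are the monomial ideals $E_{P(1)}$ and $E_{P(2)}$ obtained by splitting $E_P$ at the same plateau. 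Granting this, the isomorphism of Lemma~\ref{elemlem} restricts to an isomorphism of the fibre $\mathbb{V}(E_P)$ onto $\mathbb{V}(E_{P(1)})\times\mathbb{V}(E_{P(2)})$, with $P(1),P(2)$ of diagonal lengths $T(1),T(2)$; iterating at every plateau yields $\mathbb{V}(E_P)\cong\prod_{k=1}^{r}\mathbb{V}(E_{P(k)})$ with $P(k)$ of diagonal lengths $T(k)$, which is the first assertion.

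For the generator count I would argue by strong induction on $r$. When $r=1$ the formula reads $\kappa(P)=\kappa(P(1))$, which is trivial since $P=P(1)$. For $r\ge 2$ there is at least one plateau; applying Remark~\ref{kappa_cell1xcell_2_rem} at one such plateau splits $\G_T=G_{T'}\times G_{T''}$ and gives
\[
\kappa(P)=\kappa(P')+\kappa(P'')-1 .
\]
Because the elementary decomposition is canonical---determined by the plateaus of $T$---a single cut partitions the $r$ factors into two nonempty groups, so $\G_{T'}$ decomposes into $a$ elementary factors and $\G_{T''}$ into $b$, with $a+b=r$ and $1\le a,b<r$. The inductive hypothesis gives $\kappa(P')=\sum_{T'}\kappa(P(k))-a+1$ and $\kappa(P'')=\sum_{T''}\kappa(P(k))-b+1$; substituting,
\[
\kappa(P)=\Bigl(\textstyle\sum_{k=1}^{r}\kappa(P(k))\Bigr)-(a+b)+2-1=\sum_{k=1}^{r}\kappa(P(k))-r+1 .
\]

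The genuinely content-bearing computation---that one split costs exactly one generator---is already contained in Remark~\ref{kappa_cell1xcell_2_rem}, so the $\kappa$-arithmetic is only bookkeeping. The main obstacle I anticipate is therefore the compatibility claim in the first paragraph: one must check carefully that taking initial ideals commutes with the operations $I\mapsto (f_s,I)$ and $I\mapsto (I:f_s)$, so that $\mathbb{V}(E_P)$ truly factors and each $P(k)$ has the prescribed diagonal lengths $T(k)$; and, relatedly, that a single application of Lemma~\ref{elemlem} separates the canonical elementary factors into two groups with $a+b=r$ (this requires tracking the degree shift $T(2)_u=t_{u+s}-s$ through the constant subsequences). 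Both are routine once the monomial case is pinned down, the latter being pure combinatorial bookkeeping on the plateaus of $T$.
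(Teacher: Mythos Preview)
Your proposal is correct and follows essentially the same route as the paper: the paper's proof is the single line ``The Proposition follows from Remark~\ref{kappa_cell1xcell_2_rem},'' and your argument is precisely the explicit induction on $r$ that unpacks this, together with the observation that the Lemma~\ref{elemlem} splitting respects initial ideals (a point the paper leaves implicit). Your flagged ``obstacle''---compatibility of $\mathrm{in}(-)$ with $I\mapsto(f_s,I)$ and $I\mapsto(I:f_s)$---is indeed the only non-bookkeeping content and is routine once one notes that $f_s$ may be taken with initial monomial $y^s$, so that the Ferrers diagram of $P$ splits along the row $y^s$ into $P(1)$ and a shifted $P(2)$.
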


\begin{proof}
The Proposition follows from Remark \ref{kappa_cell1xcell_2_rem}.
\end{proof}

\begin{example}\label{flatHFexample}(See Figure \ref{5.8figure}.)
Let $P=\left(10^2,4,3^2,2^5\right)$ be the partition of diagonal lengths $$T=\left(1,2,\dots ,6_5,5_6,4_7,4_8,4_9,2_{10},0\right),$$ and difference-one hook code $$\mathfrak{Q}(P)=\left((0)_6,(1,0)_9,(2,1)_{10}\right). $$
Let $I$ be a generic element of $\mathbb{V}(E_P)$. 
The elementary components of $T$, explained in Lemma \ref{elemlem} and Remark \ref{kappa_cell1xcell_2_rem} are 
\begin{equation*}
T(1) = (1,2,3,4,4_4,4_5,\dots,4_9,2_{10},0),\quad \text{and}\hspace*{2mm} T(2)=(1,2,1).
\end{equation*}
As it is explained in Remark \ref{kappa_cell1xcell_2_rem} we let $I(1) = (f_4,I)$ be a generic element in the cell $\mathbb{V}(E_{P(1)})$ with the Hilbert function $T(1)$. Also $I(2)=(I\colon f_4)$ is a generic element in the cell $\mathbb{V}(E_{P(2)})$ with the Hilbert function $T(2)$. We have $P(1) = (10^2,2^8)$ and $P(2)=(2,1^2)$ which are subpartions of $P$ in different colors in Figure \ref{5.8figure}.
We easily see that $\kappa(P(1)) = \kappa(P(2))=3$ and therefore by Proposition \ref{kappa_prod_elem_cells_prop} we get $$\kappa(P) = \kappa(P(1)) + \kappa(P(2))-1 =5.$$

We could also compute $\kappa(P)$ by decomposition of $P$ and  $T$ into single-block components, see Equation \ref{HFdecomposition}.
Single-block component partitions $P_6,\dots ,P_{10}$ of diagonal lengths $T_{6},\dots ,T_{10}$ as follows,
$$
P_6=\left(2,1,1\right), \hspace*{0.2cm}P_7=(1), \hspace*{0.2cm}P_8=\left(0\right), \hspace*{0.2cm} P_9=\left(3,1,1\right)\hspace*{0.2cm}\text{and}\hspace*{0.2cm} P_{10}=\left(4,4,2,2\right),$$
$$
T_6=\left(1,2,1\right), \hspace*{0.2cm}T_7=(1,0), \hspace*{0.2cm}T_8=\left(0\right), \hspace*{0.2cm} T_9=\left(1,2,2,0\right)\hspace*{0.2cm}\text{and}\hspace*{0.2cm} T_{10}=\left(1,2,3,4,2,0\right).$$
The hook codes of $P_6$, $P_9$ and $P_{10}$ are  $\mathfrak{h}_6=(0)$, $\mathfrak{h}_9=(1,0)$ and $\mathfrak{h}_{10}=(2,1)$ respectively. 
Using Theorem \ref{kappathm} we get that 
$$\kappa(P_6)=\kappa(P_9)=\kappa(P_{10})=3,
$$
and for $P_7=\Delta_1$ and $P_8=\Delta_\emptyset$ by Remark \ref{empty_hook_code_block_partition}, we conclude that
$$
\hspace*{0.2cm}\kappa(P_7)=2,\hspace*{0.2cm} \kappa(P_8)=1.
$$
Therefore, Theorem \ref{componentthm} implies that

$$\kappa(P)=3+2+1+3+3-(4-2)-(10-6)=5$$

We also note that of these five generators, two generators have degree 6 and one generator has degree $7$ (corresponding to generators of $P_6$), and two have degree 10.\par
Note that $\dim \G_{T_{10}}=2(3)=6, \dim \G_{T_9}=(2)(1)=2$, and $\dim \G_T=8$, since $\G_T$ is fibred over $\mathbb P_4$ parametrizing the generator $f_4$ of $I_6$ by a Grassmannian $\Grass(2,4)$ parametrizing $I_{10}/f_4R_6$, a two-dimensional subspace of $R_{10}/f_4R_6$, which has dimension four. 
\par
\end{example}
\begin{figure}
\centering
\scalebox{0.6}{\begin{ytableau}
*(blue! 15) &*(blue! 15)&*(blue! 15)&*(blue! 15)&*(blue! 15)&*(blue! 15)&*(blue! 15)&*(blue! 15)&*(blue! 15)&*(blue! 15)\cr
*(blue! 15) &*(blue! 15)&*(blue! 15)&*(blue! 15)&*(blue! 15)&*(blue! 15)&*(blue! 15)&*(blue! 15)&*(blue! 15)&*(blue! 15)\cr
*(blue! 15)&*(blue! 15)&*(red! 20)&*(red! 20)\cr
*(blue! 15)&*(blue! 15)&*(red! 20)\cr
*(blue! 15)&*(blue! 15)&*(red! 20)\cr
*(blue! 15)&*(blue! 15)\cr
*(blue! 15)&*(blue! 15)\cr
*(blue! 15)&*(blue! 15)\cr
*(blue! 15)&*(blue! 15)\cr
*(blue! 15)&*(blue! 15)\cr
\end{ytableau}}\caption{Diagram of the partition in Example \ref{flatHFexample}. The 
elementary partitions $P(1)$ and $P(2)$ are colored blue and red, respectively.}\label{5.8figure}
\end{figure}

\section{Number of cells of special multiblock partitions.}\label{countMultiSection}
 Using Corollary \ref{singlecountcor} and Theorem \ref{componentTheorem} we are able to count the number of multiblock partitions with a given number of generators.

\begin{theorem}\label{multicountthm}
Assume that $T=(1, \dots, d, t_d, \dots, t_{\sf j},0)$ and for $d\leq i \leq \sf j$, let $T_i=(1, \dots,t_{i-1}-t_{i+1}, t_{i}-t_{i+1},0)$. Then for every positive integer $k$, the number of partitions $P$ of diagonal lengths $T$ and $\kappa(P)=k$, denoted by $\mu(T,k)$, satisfies
\begin{equation}\label{multicountEq}
\mu(T,k)=\sum_{(k_d,\dots,k_{\sf j})\in Q_k} \left(\prod_{i=d}^{\sf j}\mu(T_i,k_i)\right),
\end{equation}
where $Q_k=\{(k_d, \dots, k_{\sf j})\in \mathbb{Z}^{\mathsf{j}+1-d}\, |\, k_d+\dots+k_{\sf j}=k+(t_d-d)-(t_{\sf j}-{\sf j})\}.$ 
\end{theorem}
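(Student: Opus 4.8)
The plan is to reduce everything to the product structure already established in Sections \ref{prodsec} and \ref{numGensMultiSection}, so that the multiblock count factors as a convolution of the single-block counts $\mu(T_i, k_i)$ of Corollary \ref{singlecountcor}. The first step is to record the bijection
$$
\mathcal P(T)\ \xrightarrow{\ \sim\ }\ \prod_{i=d}^{\sf j}\mathcal P(T_i),\qquad P\longmapsto (P_d,\dots,P_{\sf j}).
$$
This follows by combining Theorem \ref{PTtoQTthm} --- the hook-code map $\mathfrak q\colon \mathcal P(T)\to\mathcal Q(T)$ is an isomorphism of sets and $\mathcal Q(T)=\prod_i\{\mathfrak h_i\subset\mathfrak B_i(T)\}$ is a product of boxes --- with Lemma \ref{hookP(i)lem}, which identifies the hook code of the component $P_i$ with the $i$-th block $\mathfrak h_i(P)$. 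Applying Theorem \ref{PTtoQTthm} to each single block $T_i$ shows that $P_i\mapsto\mathfrak h_i(P_i)$ is itself a bijection $\mathcal P(T_i)\cong\{\mathfrak h\subset\mathfrak B_i(T)\}$, so the displayed map is a bijection onto the full product. (Empty blocks, where $t_i=t_{i+1}$, contribute a singleton factor $\mathcal P(T_i)=\{\Delta_{\delta_i}\}$ by Remark \ref{empty_hook_code_block_partition}, which is harmless.)

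The second step is to transport $\kappa$ through this bijection. By Theorem \ref{componentTheorem},
$$
\kappa(P)=\sum_{i=d}^{\sf j}\kappa(P_i)-(t_d-t_{\sf j})-({\sf j}-d).
$$
Writing $k_i=\kappa(P_i)$, the equation $\kappa(P)=k$ becomes $\sum_{i=d}^{\sf j}k_i = k+(t_d-t_{\sf j})+({\sf j}-d)$. I would then verify the arithmetic identity
$$
(t_d-t_{\sf j})+({\sf j}-d)=t_d-t_{\sf j}+{\sf j}-d=(t_d-d)-(t_{\sf j}-{\sf j}),
$$
so that the constraint on $(k_d,\dots,k_{\sf j})$ is exactly the defining condition of the index set $Q_k$.

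The final step is to assemble the count. Since the correspondence $P\leftrightarrow(P_d,\dots,P_{\sf j})$ is a bijection and each $k_i=\kappa(P_i)$ depends only on the factor $P_i$, stratifying $\{P\in\mathcal P(T):\kappa(P)=k\}$ by the tuple $(k_d,\dots,k_{\sf j})=(\kappa(P_d),\dots,\kappa(P_{\sf j}))$ gives
$$
\mu(T,k)=\sum_{(k_d,\dots,k_{\sf j})\in Q_k}\ \prod_{i=d}^{\sf j}\#\{P_i\in\mathcal P(T_i):\kappa(P_i)=k_i\}=\sum_{(k_d,\dots,k_{\sf j})\in Q_k}\ \prod_{i=d}^{\sf j}\mu(T_i,k_i),
$$
which is \eqref{multicountEq}. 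The bulk of the work has already been done in the cited results, so no genuinely hard new argument is needed; the point requiring the most care is the bijectivity onto the \emph{entire} product $\prod_i\mathcal P(T_i)$ --- that every admissible tuple of single-block components arises from a genuine $P\in\mathcal P(T)$ --- which is precisely the content of the hook-code isomorphism of Theorem \ref{PTtoQTthm}, together with a consistency check that empty blocks ($t_i=t_{i+1}$) simply pin their coordinate to $k_i=\delta_i+1$ and therefore do not affect the convolution beyond a reindexing (cf. Remark \ref{empty-component-contribution}).
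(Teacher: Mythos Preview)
Your proof is correct and follows essentially the same approach as the paper: both derive the convolution formula as a direct consequence of Theorem~\ref{componentTheorem}, with your version making explicit the bijection $\mathcal P(T)\cong\prod_i\mathcal P(T_i)$ (via Theorem~\ref{PTtoQTthm} and Lemma~\ref{hookP(i)lem}) and the arithmetic check on $Q_k$ that the paper leaves implicit.
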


\begin{proof}
This is an immediate consequence of Theorem~\ref{componentTheorem}. Also recall that Corollary~\ref{singlecountcor} provides an explicit formula for $\mu(T_i,k_i)$, for every $d\leq i \leq \sf j$. 

\end{proof}
\begin{remark}
For each $i\in [d,\sf j]$, by Corollary~\ref{singlecountcor}, $\mu(T_i,k_i)$ is non-zero if and only if $\max\{t_i-t_{i+1}+1, t_{i-1}-t_i+1\}+1\leq k_i \leq t_{i-1}-t_{i+1}+1$. Thus in Equation~(\ref{multicountEq}) we are effectively taking the sum over the points in the hyperplane defined by $k_d+\dots+k_{\sf j}=k+(t_d-d)-(t_{\sf j}-\sf j)$ in the hyper cubes obtained by the product of line segments of the form  $[\max\{t_i-t_{i+1}, t_{i-1}-t_i\}+1, t_{i-1}-t_{i+1}+1]$ in $\mathbb{Z}^{\mathsf{j}+1-d}$.
\end{remark}
Recall that $\mathcal{P}(T)$ is the set of all partitions of diagonal lengths $T$. Denote by $A$ the cardinality of $\mathcal{P}(T)$. We have from \cite[Theorem 3.30]{IY}, or as a consequence of Equation \ref{count2eq} below, that
\begin{equation}\label{cardinalityPTeq}
A=\prod_{d\le i\le {\sf j}}\binom{t_{i-1}-t_{i+1}+1}{t_i-t_{i+1}}.
\end{equation}
A refinement, grading by the dimension of the cells, gives the Betti numbers of $\G_T$ \cite[Equation 3.34]{IY}.
Recall from Definition~\ref{special-partition-definition} that a partition $P$ of diagonal lengths $T$ is called special if $\kappa(P)>\kappa(T)$, and denote by $S$ the number of 
special partitions of diagonal lengths $T$.\par 
\noindent Using Definition \ref{Pidef1}, we decompose a partition $P$ of diagonal lengths $T=(1,2,\dots ,d, t_d,\dots , t_{\sf j},0)$ into ${\sf j}+1-d$ single-block partitions, $P_d,\dots ,P_{\sf j}$, where for each $d\leq i\leq \sf j $, the diagonal lengths of $P_i$ is the sequence \linebreak$T_i=\left(1,\dots ,t_{i-1}-t_{i+1},t_i-t_{i+1},0\right)$.
For each $d\leq i\leq \sf j$, we denote the total number of partitions of diagonal lengths $T_i$ by $A_i$ (see Lemma \ref{count1blockcor} and Equation \eqref{count2eq} below) and the number of special partitions of diagonal lengths $T_i$ by $S_i$.
 The number of special partitions is equal to $\sum_{k>\kappa(T)}{\mu(T,k)}$, where $\mu(T,k)$ is described in the above theorem.\par 
\noindent In the following, we provide the number of special partitions of diagonal lengths $T=\left(1,2,\dots ,d,t_d,\dots ,t_{\sf j},0\right)$, using the inclusion-exclusion principal. 
\begin{corollary}[Number of special partitions]\label{countspecialcor}
The number of special partitions of diagonal lengths $T=\left(1,2,\dots ,d,t_d,\dots ,t_{\sf j},0\right)$ is equal to
\begin{equation}\label{specialmultiblock}
S=\sum_{i=1}^{\mathsf{j}-d+1}(-1)^{i+1}\left(\sum_{\lambda\subseteq \{d,\dots ,\mathsf{j}\}, \vert\lambda\vert=i}S_\lambda A_{\{d,\dots ,{\sf {j}}\}\setminus \lambda}\right),
\end{equation}
where $S_\lambda=\prod_{i\in \lambda} S_i$ and $A_{\{d,\dots ,{\sf j}\}\setminus \lambda}=\prod_{i\in {\{d,\dots ,{\sf{j}}\}\setminus \lambda}}A_i$.
\end{corollary}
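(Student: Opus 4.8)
The plan is to reduce the statement to an elementary inclusion–exclusion count, once the product structure of $\mathcal P(T)$ and the characterization of special partitions are in hand. There is no deep geometry left to do here: the hard work has been carried out in Theorems~\ref{projectionthm} and~\ref{componentthm}, and this corollary is purely combinatorial bookkeeping built on top of them.

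First I would record the product decomposition of $\mathcal P(T)$. By Theorem~\ref{PTtoQTthm} the hook–code map $\mathfrak q\colon \mathcal P(T)\to \mathcal Q(T)$ is a bijection, and by its Definition~\ref{hookcodedef} the target $\mathcal Q(T)=\prod_{i=d}^{\sf j}\{\mathfrak h_i\subseteq \mathfrak B_i(T)\}$ factors as a product indexed by the degrees $i\in[d,{\sf j}]$. Combining this with Lemma~\ref{hookP(i)lem}, which identifies the $i$-th factor with the hook code of the single-block component $P_i\in\mathcal P(T_i)$, yields a natural bijection
\[
\mathcal P(T)\;\xrightarrow{\ \sim\ }\;\prod_{i=d}^{\sf j}\mathcal P(T_i),\qquad P\mapsto (P_d,\dots,P_{\sf j}),
\]
under which $\#\mathcal P(T_i)=A_i$. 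The empty-block degrees (where $t_i=t_{i+1}$, so $P_i$ is the basic triangle of Remark~\ref{empty_hook_code_block_partition}) simply contribute the trivial factor $A_i=1$, $S_i=0$, leaving the final formula unaffected.

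Next I would invoke Theorem~\ref{componentthm}: the partition $P$ is special if and only if some component $P_i$ is special. Hence, under the bijection above, $P$ is \emph{non}-special precisely when every component $P_i$ is non-special, so the number of non-special partitions of diagonal lengths $T$ equals $\prod_{i=d}^{\sf j}(A_i-S_i)$. Subtracting this from the total count $\prod_{i=d}^{\sf j}A_i$ of \eqref{cardinalityPTeq} gives
\[
S=\prod_{i=d}^{\sf j}A_i-\prod_{i=d}^{\sf j}(A_i-S_i).
\]

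Finally I would expand the second product by the distributive law,
\[
\prod_{i=d}^{\sf j}(A_i-S_i)=\sum_{\lambda\subseteq\{d,\dots,{\sf j}\}}(-1)^{|\lambda|}\Big(\prod_{i\in\lambda}S_i\Big)\Big(\prod_{i\notin\lambda}A_i\Big)=\sum_{\lambda}(-1)^{|\lambda|}S_\lambda A_{\{d,\dots,{\sf j}\}\setminus\lambda},
\]
whose $\lambda=\emptyset$ term is exactly $\prod_{i}A_i$. Cancelling this against the leading $\prod_{i}A_i$ and regrouping the surviving terms by $|\lambda|=i$ converts the sum into $\sum_{i=1}^{{\sf j}-d+1}(-1)^{i+1}\sum_{|\lambda|=i}S_\lambda A_{\{d,\dots,{\sf j}\}\setminus\lambda}$, which is precisely \eqref{specialmultiblock}. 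The only point requiring genuine care is the compatibility of the factorization of $\mathcal P(T)$ with the notion of specialness — that a product of components is non-special iff each factor is — but this is supplied verbatim by Theorem~\ref{componentthm}; the remaining sign and grouping manipulation in the inclusion–exclusion expansion is routine.
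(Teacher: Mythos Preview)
Your proposal is correct and follows essentially the same approach as the paper: both reduce to Theorem~\ref{componentthm} (a partition is special iff some single-block component is special), use the product decomposition $\mathcal P(T)\cong\prod_i\mathcal P(T_i)$, and then apply inclusion--exclusion. Your write-up is simply more explicit than the paper's, which just states the formulas for $A_i$ and $S_i$ and invokes the inclusion--exclusion principle without spelling out the expansion of $\prod_i A_i-\prod_i(A_i-S_i)$.
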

\begin{proof}
Theorem \ref{componentthm} implies that $P$ is special if and only if $P_i$ is special for some $i\in[d,\sf j]$. \\
Note that for each $i\in [d, \sf j]$ the number of partitions of diagonal lengths $T_i$ is equal to
\begin{equation}\label{count2eq} A_i=\binom{t_{i-1}-t_{i+1}+1}{t_i-t_{i+1}}.
\end{equation}
On the other hand, 
 Theorem \ref{countingthm} provides the number of special single-block partitions. Using Equation~\ref{numberspecial1eq} for each $d\leq i\leq \sf j$ we obtain the number of special partitions of diagonal lengths $T_i$ as the following
\begin{equation}\label{count3eq}
S_i=\binom{t_{i-1}-t_{i+1}+1}{t_i-t_{i+1}-\delta_i-1},
\end{equation}
where $\delta_i=\max\{2t_i-2t_{i+1}-t_{i-1}+t_{i+1},0\}=\max\{2t_i-t_{i+1}-t_{i-1},0\}$.\\
\noindent Now using the inclusion-exclusion principal we get the equality of Equation~\eqref{specialmultiblock}.
\end{proof}

As a consequence of the above Theorem, we recover a result of \cite[Theorem 3.7]{AIK} providing the number of complete intersection Jordan types $P\in \mathcal P(T)$. Recall that a complete intersection Jordan type of diagonal lengths $T$ is a partition $P$ of diagonal lengths $T$ such that $\kappa(P)=2$.
\begin{corollary}\label{CIJordantypecor}
\begin{itemize}
\item[$(a)$]The number of complete intersection Jordan types of diagonal lengths 
\begin{equation*}
T=\left(1_0,2_1,\dots ,(d-1)_{d-2},d_{d-1},(d-1)_{d},\dots , 2_{2d-3},1_{2d-2}\right)
\end{equation*}
is equal to $2^{d-1}$.
\item[$(b)$]The number of complete intersection Jordan types with diagonal lengths 
\small\footnotesize
\begin{equation*}
T=\left(1_0,2_1,\dots ,(d-1)_{d-2},d_{d-1}, \dots , d_{d+k-2},(d-1)_{d+k-1},\dots , 2_{2d-4+k},1_{2d-3+k}\right)
\end{equation*}
\normalsize
where $k\ge 2$ is equal to $2^{d}$.
\end{itemize}
 
\end{corollary}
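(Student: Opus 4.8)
The plan is to recognize that both sequences in the statement are Hilbert functions of a \emph{generic} complete intersection (of generator degrees $(d,d)$ in part $(a)$ and $(d,d+k-1)$ in part $(b)$), so that $\kappa(T)=2$. A one-line check with Lemma~\ref{kappaTlem} confirms this: in case $(a)$ one has $\delta_d=1$ and $[\delta_{i+1}-\delta_i]^+=0$ for $i>d$, while in case $(b)$ one has $\delta_d=0$ with a single positive jump further up, so in either case $\kappa(T)=1+\delta_d+\sum_{i>d}[\delta_{i+1}-\delta_i]^+=2$. Since $\kappa(P)\ge\kappa(T)=2$ for every $P$, a partition $P\in\mathcal P(T)$ is a complete intersection Jordan type (i.e.\ $\kappa(P)=2$) exactly when it is \emph{non-special}. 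By Theorem~\ref{componentthm}, $P$ is non-special if and only if each single-block component $P_i$ is non-special; since $P\mapsto(P_d,\dots,P_{\sf j})$ is a bijection $\mathcal P(T)\cong\prod_i\mathcal P(T_i)$ (Theorem~\ref{PTtoQTthm} together with Lemma~\ref{hookP(i)lem}), the number of complete intersection Jordan types is the product $\prod_{i=d}^{\sf j}(A_i-S_i)$ of the numbers $A_i-S_i$ of non-special partitions for each component sequence $T_i$, where by Corollary~\ref{singlecountcor} a single block with parameters $s,t,\delta$ contributes $A_i-S_i=\binom{s+t}{s}-\binom{s+t}{s+\delta+1}$.

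The next step is to read off the components $T_i=(1,\dots,\delta_i+\delta_{i+1},\delta_{i+1},0)$ from Equation~\eqref{HFdecomposition}. For the symmetric table in $(a)$ one computes $\delta_i=1$ for all $i\in[d,2d-1]$, so every component is $T_i=(1,2,1)$ with $s=2,\ t=1,\ \delta=0$, and there are exactly ${\sf j}-d+1=d-1$ of them. Each such block contributes $\binom{3}{2}-\binom{3}{3}=2$ non-special partitions, so the total is $2^{d-1}$, which proves $(a)$.

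For $(b)$ the bookkeeping of the $\delta_i$ across the flat stretch is the part that needs care. Writing them out gives $\delta_i=0$ for $d\le i\le d+k-2$ and $\delta_i=1$ for $d+k-1\le i\le 2d+k-2$. Hence the components split into three types: for $i\in[d,d+k-3]$ one has $\delta_i=\delta_{i+1}=0$, so $T_i$ is empty (the basic triangle, by Remark~\ref{empty_hook_code_block_partition}) and contributes the factor $1$ — there are $k-2$ of these; the single index $i=d+k-2$ gives $\delta_i=0,\ \delta_{i+1}=1$, hence $T_i=(1,1,0)$ with $s=t=1,\ \delta=1$, contributing $\binom{2}{1}-\binom{2}{3}=2$; and the $d-1$ indices $i\in[d+k-1,2d+k-3]$ each give $T_i=(1,2,1)$, contributing $2$ apiece. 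Multiplying, the number of complete intersection Jordan types is $1^{k-2}\cdot 2\cdot 2^{d-1}=2^{d}$, which proves $(b)$.

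The only genuine obstacle is the bookkeeping in case $(b)$: one must correctly locate the unique index where $\delta_i$ jumps from $0$ to $1$, so as to pin down the single $(1,1)$ component sitting at the junction of the flat and descending parts, and to confirm that the $k-2$ purely flat components are empty and therefore harmless. Everything else is a direct substitution into Corollary~\ref{singlecountcor} and Theorem~\ref{componentthm}. I would guard against an off-by-one error by checking the total component count $(k-2)+1+(d-1)={\sf j}-d+1$ and by independently recovering $\kappa(T)=2$ through Theorem~\ref{componentTheorem} as $\sum_i\kappa(T_i)-(t_d-t_{\sf j})-({\sf j}-d)$.
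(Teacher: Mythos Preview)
Your proof is correct and rests on the same ingredients as the paper's: the identification of complete intersection Jordan types with non-special partitions (since $\kappa(T)=2$), the component decomposition $\mathcal P(T)\cong\prod_i\mathcal P(T_i)$, and Theorem~\ref{componentthm}. The enumeration of the $T_i$ in both cases matches the paper's exactly.

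The only difference is in the final arithmetic step. The paper first computes the total count $A=\prod_i A_i$ and then the number $S$ of special partitions via the inclusion--exclusion formula of Corollary~\ref{countspecialcor}, obtaining $A-S=3^{d-1}-(3^{d-1}-2^{d-1})=2^{d-1}$ in case~(a) and $2\cdot 3^{d-1}-(2\cdot 3^{d-1}-2^d)=2^d$ in case~(b). You instead go directly to the product $\prod_i(A_i-S_i)$, which is of course the same number (the inclusion--exclusion sum is just the expansion of $\prod_i A_i-\prod_i(A_i-S_i)$). Your route is a bit shorter and avoids invoking Corollary~\ref{countspecialcor} at all; the paper's route has the advantage of exercising that corollary, which is stated just above. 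Either way the argument is complete.
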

\begin{proof}
\begin{itemize}

\item[$(a)$] In this case we have that $\mathsf{j}= 2d-2$ and the number of blocks in this case is equal to $d-1$, we also have $t_{d-1}=d, t_d=d-1,\dots ,t_{\sf j}=1$. For each $d\leq i\leq \sf j$ we have that $T_i=\left(1,2,1\right)$, and clearly $A_i=3$ and $S_i=1$. So the total number of partitions of diagonal lengths $T$ is $A=3^{d-1}$. On the other hand, using (\ref{specialmultiblock}), we obtain the number of  special partitions
\begin{align*}
S&=\sum_{i=1}^{d-1}(-1)^{i+1}\sum_{\lambda\subseteq\{d,\dots ,2d-2\}, \vert\lambda\vert=i}1^i \cdot 3^{d-1-i}\\
&=\sum_{i=1}^{d}(-1)^{i+1}\binom{d-2}{i}3^{d-1-i}\\
&=3^{d-1 }-2^{d-1}.
\end{align*}
Thus the number of complete intersection Jordan types with the Hilbert function in $(a)$ is equal to $A-S=2^{d-1}$.
\item[$(b)$] In this case we have that $\mathsf{j}=2d-3+k$ and $t_{d-1}=\cdots =t_{k+d-2}=d$, $t_{k+d-1}=d-1, \dots , t_{2d+k-3}=1$.  For each $i\in [d,d+k-3]$ we have $T_i=0$ and clearly  $A_i=1$ and $S_i=0$. We have $T_{d+k-2}=(1,1)$, so $A_{d+k-2}=2$ and $S_{d+k-2}=0$. There are $d-1$ more components for each $i\in [d+k-1, 2d+k-3]$ where $T_i=(1,2,1)$, $A_i=3$ and $S_i=1$, similar to the previous case. So the total number of  partitions in this case is $A=2\cdot 3^{d-1}$ \par
\noindent Using Equation (\ref{specialmultiblock}) we obtain the number of special partitions 
\begin{align*}
S&=\sum_{i=1}^{d+k-2}(-1)^{i+1}\sum_{\lambda\subseteq\{d,\dots ,2d-2\}, \vert\lambda\vert=i}1^i \cdot 3^{d-1-i}\cdot 2\\
&=2\sum_{i=1}^{d-1}(-1)^{i+1}\binom{d+k-3}{i}3^{d-1-i}\\
&=2\cdot 3^{d-1 }-2^{d}.
\end{align*}
Therefore the number of complete intersection Jordan types in this case is equal to $A-S=2^d$.
\end{itemize}
\end{proof}
\newpage
\listoffigures
\begin{ack} 
The first author was supported by the Swedish Research Council grant 
VR 2013-4545. The authors are grateful to the organizers of the conference ``Lefschetz Properties in Algebra, Geometry and Combinatorics'' at  Centro Internazionale per la Ricerca Matematica (CIRM) at Levico, Italy, in June 2018; and to the successor conference ``Lefschetz Properties in Algebra, Geometry and Combinatorics, II'' at
Centre International de Rencontres Math\'{e}matiques (CIRM) at Lumini, France in October 2019, where they participated in the working group on Jordan type. Helpful comments of a referee led us to focus the paper and improve clarity.\end{ack}

\small


\begin{thebibliography}{xxxxxx} 

 \bibitem[AIK]{AIK}
 N. Altafi, A. Iarrobino, and L. Khatami: \emph{Complete intersection Jordan types in height two}, J. Algebra 557 (2020),  224--277. 

 \bibitem[B-B]{Bia}
A. Bialynicki-Birula: \emph{Some properties of the decompositions of algebraic varieties determined by actions of a torus}, Bull. Acad. Polon. Sci. S\'{e}r. Sci. Math. Astronom. Phys. 24  no. 9, (1976), 667--674.

\bibitem[Br]{brian}
 J.~Brian\c{c}on: \emph{Description de $Hilb^{n}{\mathbb C}\{x,y\}$}, 
 Inventiones math. 41 (1977), 45-89.
 
\bibitem[Br-Ga]{brian-gal}
 J.~Brian\c{c}on and A.~Galligo:\emph{
D\'eformations distingu\'ees d'un point de ${\mathbb C}^{2}$
ou ${\mathbb R}^{2}$}, Ast\'erisque {\bf 7,8} (1973), 129-138.

\bibitem[Bu]{Bu}
L. Burch: \emph{On ideals of finite homological dimension in local rings}, 
Proc. Cambridge Philos. Soc. 64 (1968), 941--948.

\bibitem[CoVa]{CoVa}
A. Conca and G. Valla: \emph{Canonical Hilbert-Burch matrices for ideals of $k[x,y]$,} 
Special volume in honor of Melvin Hochster. 
Michigan Math. J. 57 (2008), 157--172.

\bibitem[Con]{Con}
A. Constantinescu: \emph{Parametrizations of ideals in K[x,y] and K[x,y,z],} 
J. Algebra 346 (2011), 1--30.

\bibitem[Cook]{Cook}
D. Cook: \emph{The strong Lefschetz property in codimension two},
J.~Commut. Algebra 6  no. 3 (2014), 323--345.

\bibitem[Ell]{El}
G. Ellingsrud: \emph{Sur le sch\'{e}ma de Hilbert des vari\'{e}t\'{e}s de codimension 2 dans $\mathbb P^e$  a c\^{o}ne de Cohen-Macaulay}, Ann. Sci. Ecole Norm. Sup. (4) 8 (1975), 423--432.

\bibitem[ES1]{ES1}
G. Ellingsrud and S. Str{\o}mme: \emph{On the homology of the Hilbert scheme of points in the plane}, Invent. Math. 87 (1987), 343--352.

\bibitem[ES2]{ES2}
G. Ellingsrud and S. Str{\o}mme: \emph{On a cell decomposition of the Hilbert scheme of points in the plane}, Invent. Math. 91 (1988), 365--370.

\bibitem[Ev]{Ev}
L. Evain: \emph{Irreducible components of the equivariant punctual Hilbert schemes}, Adv. Math. 185 (2004), no. 2, 328--346.

\bibitem[G\"{o}1]{Got}
L. G\"{o}ttsche:  \emph{Betti numbers for the Hilbert function strata of the punctual Hilbert scheme in two variables}, Manuscr. Math. 66 (1990), 253--259. 

\bibitem[Gm]{Gm2}
G. Gotzmann: \emph{Stratifizierungen von Hilbertschemata- Teil 1: Hilbertschemata von Punkten in der affinen Ebene}, Preprint (1991). Rheine, 134 p. 

\bibitem[Ia1]{Ia}
A. Iarrobino: \emph{Punctual Hilbert Schemes}, Mem. Amer. Math. Soc. vol. 10 (1977), 111p. \#188.

\bibitem[Ia2]{Ia2}
A. Iarrobino: \emph{Ancestor ideals of vector spaces of forms, and level algebras}, J. Algebra 272 no. 2 (2004), 530-580.

\bibitem[IMM]{IMM}
A. Iarrobino, P. Macias~Marques, and C. McDaniel, \emph{Artinian
algebras and {J}ordan type}, J. Commut. Algebra 14 (3), 365-414, (2022).

\bibitem[IY1]{IY}
A. Iarrobino and J. Yam\'{e}ogo: {\it The family $\G_T$ of graded
Artinian quotients of ${\sf k}[x,y]$ of given Hilbert function},  in: Special issue in honor of Steven L. Kleiman, Comm. Algebra 31 (8), (2003), 3863--3916.

\bibitem[IY2]{IY2}
A. Iarrobino and J. Yam\'{e}ogo: \emph{Graded Ideals in ${\sf k}[x,y]$ and partitions of diagonal lengths $T$: the hook code}, (in preparation).

\bibitem[Kl]{Kl}
J. O. Kleppe: \emph{Families of Artinian and one-dimensional algebras,} 
J. Algebra 311 (2007), no. 2, 665--701.

\bibitem[Kc]{Kc}
C. Krattenthaler, \emph{Lattice Path Enumeration}, 	 arXiv:1503.05930v3 [math.CO]

\bibitem[LW]{LW}
N. Loehr and G. Warrington: \emph{A continuous family of partition statistics equidistributed with length}, J. Combin. Theory Ser. A 116 (2009), no. 2, 379--403.  

\bibitem[MR]{MR}
M. Mandal and M.E. Rossi: \emph{The tangent cone of a local ring of codimension 2,}
Acta Math. Vietnam. 40 (2015), no. 1, 85--100.

\bibitem[PfR]{PfR} 
G. Pfister and M. Roczen: \emph{Algebraische ausgezeichnete Deformationen in der Ebene},
Math. Nachr. 72 (1976), 145-149. 

\bibitem[Y1]{Y0}
J. Yam\'{e}ogo: \emph{Sur l'alignement dans les sch\'{e}mas de Hilbert ponctuels du plan}, Math Ann. 285, (1989), 511--525.

\bibitem[Y2]{jy-1} J.~Yam\'{e}ogo: \emph{D\'{e}composition cellulaire de vari\'{e}t\'{e}s param\'{e}trant des id\'{e}aux homog\`{e}nes de C[[x,y]]. Incidence des cellules. I,} Compositio Math. 90 (1) (1994), 81--98. 

\bibitem[Y3]{jy-2}
J. Yam\'{e}ogo: \emph{D\'{e}composition cellulaire de vari\'{e}t\'{e}s param\'{e}trant des id\'{e}aux homog\`{e}nes de C[[x,y]]. Incidence des cellules. II,}
J. Reine Angew. Math. 450 (1994), 123--137. 



\end{thebibliography}
\end{document}